\numberwithin{equation}{subsection}
\newtheoremstyle{plain2}    % to uniformize styles with the paragraphs
   {}            % ABOVESPACE (empty value is the same as default value)
   {}            % BELOWSPACE (empty value is the same as degfault value)
   {\itshape}    % BODYFONT  (\itshape, \bfseries, \normalfont)
   {}            % INDENT (empty value is the same as 0pt, \parindent is the same as the standard indent for new paragraphs)
   {\bfseries}   % HEADFONT
   {.}           % HEADPUNCT
   {5pt plus 1pt minus 1pt}  % HEADSPACE (leave 5pt plus 1pt minus 1pt)
   {{\thmnumber{#1} \thmname{#2}{\thmnote{ (#3)}}}}          %CUSTOM-HEAD-SPEC
\newenvironment{customthmtwo}[1]
  {\othercustomthm}
  {\endothercustomthm}
\theoremstyle{plain2}
\newtheorem{thmy}{Theorem}
\newtheorem{theorem}[equation]{Theorem}
\newtheorem{corollary}[equation]{Corollary}
\newtheorem{lemma}[equation]{Lemma}
\newtheorem{proposition}[equation]{Proposition}
\newtheorem{conjecture}[equation]{Conjecture}
\newtheorem{cor}[equation]{Corollary}
\newtheorem{prop}[equation]{Proposition}
\newtheorem{Assumption}[equation]{Assumption}
\newtheorem{definition}[equation]{Definition}
\theoremstyle{definition}
\newtheorem{remark}[equation]{Remark}
\newtheoremstyle{stepstyle}
   {}     {}
   {\normalfont}
   {\parindent}
   {\itshape}
   {}
   {5pt plus 1pt minus 1pt}
   {{\thmname{#1} \thmnumber{#2}:{\thmnote{#3}}}}
\theoremstyle{stepstyle}
\newtheoremstyle{point}
   {}     {}
   {\normalfont}
   {}
   {\bfseries}
   {}
   {5pt plus 1pt minus 1pt}
   {{\thmname{#1}(\thmnumber{#2})\thmnote{ #3.}}}
\theoremstyle{point}
\newtheorem{point}[equation]{}%[section]
\newtheoremstyle{subpoint}
   {}     {}            % maybe leave less space above and under?
   {\normalfont}
   {}                   % maybe add indent?
   {\normalfont}
   {}
   {5pt plus 1pt minus 1pt}
   {{\thmname{#1}{\bf (\thmnumber{#2})}\thmnote{ #3.}}}
\theoremstyle{subpoint}
\newtheorem{subpoint}[equation]{}%[section]
\newcommand{\spa}[1]{\begin{subpoint}#1\end{subpoint}}           % shortcut
\newcommand{\cG}{\ensuremath{\mathscr{G}}}
\newcommand{\cS}{\ensuremath{\mathscr{S}}}
\newcommand{\cV}{\ensuremath{\mathscr{V}}}
\newcommand{\cY}{\ensuremath{\mathscr{Y}}}
\newcommand{\cK}{\ensuremath{\mathscr{K}}}
\newcommand{\cX}{\ensuremath{\mathscr{X}}}
\newcommand{\cE}{\ensuremath{\mathscr{E}}}
\newcommand{\cN}{\ensuremath{\mathscr{N}}}
\newcommand{\SL}{\mathrm{SL}}
\newcommand{\Gr}{\mathrm{Gr}}
\newcommand{\Autom}{\mathrm{Aut}}
\newcommand{\St}{\mathrm{St}}
\newcommand{\Stcl}{\overline{\mathrm{St}}}
\newcommand{\Link}{\mathrm{Link}}
\DeclareMathOperator{\Hom}{Hom}
\let\ker\relax
\DeclareMathOperator{\ker}{ker}
\newcommand{\Z}{\mathbb{Z}}
\newcommand{\N}{\mathbb{N}}
\newcommand{\R}{\mathbb{R}}
\newcommand{\Q}{\mathbb{Q}}
\newcommand{\C}{\mathbb{C}}
\newcommand{\A}{\mathbb{A}}
\renewcommand{\P}{\mathbb{P}}
\DeclareMathOperator{\Spec}{Spec}
\DeclareMathOperator{\PL}{PL}
\DeclareMathOperator{\ord}{ord}
\DeclareMathOperator{\Sk}{Sk}
\DeclareMathOperator{\Skess}{Sk}
\DeclareMathOperator{\Cone}{Cone}
\newcommand{\Log}{\textrm{Log}}
\newcommand{\codim}{\textrm{codim}}
\newcommand{\toric}{Z}
\newcommand{\Diff}{\operatorname{Diff}}
\newcommand{\DMR}{\mathcal{DMR}}
\crefname{section}{\S}{\S\S}
\Crefname{section}{\S}{\S\S}
\def\@tocline#1#2#3#4#5#6#7{\relax
  \ifnum #1>\c@tocdepth % then omit
  \else
    \par \addpenalty\@secpenalty\addvspace{#2}%
    \begingroup \hyphenpenalty\@M
    \@ifempty{#4}{%
      \@tempdima\csname r@tocindent\number#1\endcsname\relax
    }{%
      \@tempdima#4\relax
    }%
    \parindent\z@ \leftskip#3\relax \advance\leftskip\@tempdima\relax
    \rightskip\@pnumwidth plus4em \parfillskip-\@pnumwidth
    #5\leavevmode\hskip-\@tempdima
      \ifcase #1
       \or\or \hskip 1em \or \hskip 2em \else \hskip 3em \fi%
      #6\nobreak\relax
    \dotfill\hbox to\@pnumwidth{\@tocpagenum{#7}}\par
    \nobreak
    \endgroup
  \fi}
\renewcommand{\O}{\mathcal{O}}
\DeclareMathOperator{\Supp}{Supp}
\newcommand{\Aut}{\textrm{Aut}}
\newcommand{\PP}{\mathcal{P}}
\newcommand{\caO}{\ensuremath{\mathcal{O}}}
\DeclareMathOperator{\dlt}{dlt}
\renewcommand{\PP}{\mathbb{P}}
\newcommand{\FF}{\mathfrak{f}}
\newcommand{\dS}{R}
\newcommand{\cZ}{\ensuremath{\mathscr{Z}}}
\newcommand{\Gl}{\mathrm{GL}_n}
\newcommand{\Sl}{\mathrm{SL}_n}
\newcommand{\PGl}{\mathrm{PGL}_n}
\title{On the geometric P=W conjecture}
\author{Mirko Mauri}
\address{Department of Mathematics, University of Michigan, Ann Arbor, MI
48109--1043, USA}
\email{\href{mailto:mauri@mpim-bonn.mpg.de}{mirmom@umich.edu}} 
\author{Enrica Mazzon}
\address{Department of Mathematics, University of Michigan, Ann Arbor, MI
48109--1043, USA}
\email{\href{mailto:mazzon@mpim-bonn.mpg.de}{mazzon@umich.edu}}
\author{Matthew Stevenson}
\address{Department of Mathematics, University of Michigan, Ann Arbor, MI
48109--1043, USA}
\email{\href{mailto:stevmatt@umich.edu}{stevmatt@umich.edu}}
\date{\today}
\begin{document}

\begin{abstract}
We formulate the geometric P=W conjecture for singular character varieties. We establish it for compact Riemann surfaces of genus one, and obtain partial results in arbitrary genus. To this end, we employ non-Archimedean, birational and degeneration techniques to study the topology of the dual boundary complex of character varieties. We also clarify the relation between the geometric and the cohomological P=W conjectures.
\end{abstract}

\maketitle

\setcounter{tocdepth}{1}
\section{Introduction}
Given a smooth quasi-projective variety $X$, choose a compactification $\overline{X} = X \cup \partial X$ such that the boundary $\partial X$ has simple normal crossings (snc). The \emph{dual boundary complex} $\mathcal{D}(\partial X)$ is a cell complex encoding information about the irreducible components of $\partial X$ and their intersections. Roughly, the combinatorics of $\mathcal{D}(\partial X)$ tells us about the geometry of $X$ at infinity. 

The purpose of this paper is to study the topology of dual boundary complexes of certain character varieties in relation to the geometric P=W conjecture. 

\subsection{The P=W conjectures.} Let $C$ be a  Riemann surface of genus $g$, and let $G$ be a complex reductive algebraic group. 
The Dolbeault and Betti moduli spaces $M_{\mathrm{Dol}}(C,G)$ and $M_{\mathrm{B}}(C,G)$ are central objects in non-abelian Hodge theory. The former parametrises principal $G$-Higgs bundles with vanishing Chern classes, while the latter, also called $G$-character variety, parametrises representations of the fundamental group of $C$ with values in $G$; see \cref{subsec:defnMBMDol}. Although the algebraic moduli problems which they solve are very
different, there exists a canonical real analytic isomorphism 
\[\Psi\colon M_{\mathrm{Dol}}(C,G) \to M_{\mathrm{B}}(C,G),\]
 called \emph{non-abelian Hodge correspondence}. Note that $\Psi$ is not a biholomorphism: the Dolbeault moduli space carries a projective  Lagrangian fibration onto an affine space
\[\chi: M_{\mathrm{Dol}}(C,G) \to \C^{\frac{1}{2}\dim M_{\mathrm{Dol}}(C,G)},\]
called \emph{Hitchin fibration}, whose regular fibres are holomorphic Lagrangian tori; instead the Betti moduli space does not contain any proper subvariety of positive dimension, since it is affine. 

The geometric P=W conjecture aims to a better understanding of the asymptotic behaviour at infinity of the transcendental map $\Psi$.
Formulated by Katzarkov, Noll, Pandit, and Simpson in~\cite[Conjecture 1.1]{KatzarkovNollPanditEtAl2015}, it says in brief that
\vspace{7pt}

\noindent \textbf{Geometric P=W conjecture} (abridged version)\textbf{.}
\emph{The dual boundary complex $\mathcal{D}(\partial M_{\mathrm{B}}(C,G))$ is homotopy equivalent to a sphere.}
\vspace{7pt}

\noindent The dual boundary complex should furthermore be naturally identified with the sphere at infinity on the base of the Hitchin fibration. 
The full statement actually involves the commutativity of a square diagram, but we postpone the precise formulation of the conjecture to \cref{sec:geometricP=W}. There we propose a tweaked version of the original statement in \cite{KatzarkovNollPanditEtAl2015} necessary to deal with the singularities of the Betti and Dolbeault moduli spaces.

The geometric P=W conjecture takes its name after a conjecture formulated by de Cataldo, Hausel and Migliorini in \cite{CataldoHauselMigliorini2012}, called cohomological P=W conjecture; see \cref{sec:cohomologicalP=Wconjecture}. The different geometry of $M_{\mathrm{Dol}}(C,G)$ and $M_{\mathrm{B}}(C,G)$ is encoded in two filtrations on their isomorphic singular cohomology, named respectively P and W, which stand for perverse and weight filtrations. 
Despite the different origin of the filtrations, the conjecture asserts that
\vspace{7pt}

\noindent \textbf{Cohomological P=W conjecture.}
\emph{The non-abelian Hodge correspondence $\Psi$ exchanges the perverse and weight filtrations.}
\vspace{7pt}

In \cref{sec:p=wgeomcohom}, we clarify the relation between the cohomological and geometric conjectures, thus answering the final question of \cite{Migliorini}; see also \cite[\S 1]{Harder2019}\footnote{Note that in \cref{introthm:cohomologicalvsgeometric} we do not need to assume the existence of an snc logCY compactification of $M_{\mathrm{B}}(C,G)$ as in loc. cit.}. 

\begin{customthmtwo}{A}
[\cref{thm:cohomologicalvsgeometric}] \label{introthm:cohomologicalvsgeometric}
Under the technical \cref{assump:genericsmoothness}, the geometric P=W conjecture implies the cohomological P=W conjecture at the highest weight.
\end{customthmtwo}

\subsection{New evidence for the geometric P=W conjecture.}
Up to date, the evidence for the geometric P=W conjecture in the literature concerns only the case of punctured spheres, and the commutativity of the square diagram is known only in few low-dimensional examples; see \cref{rmk:relationotherworksIII}. 
In this paper, following a suggestion of Simpson in ~\cite{Simpson2016}, we study the case of Betti moduli spaces associated to compact Riemann surfaces. More precisely, we establish the geometric P=W conjecture both in rank one for a compact curve of arbitrary genus, and in arbitrary rank and genus one.

\begin{customthmtwo}{B}
[\cref{thm:P=Wconjecturegeomrankonetext}, \cref{thm:P=Wconjecturegeomgenusonetext}] \label{introthm:P=Wconjecturegeomgenusonetext}
Let $C$ be a compact Riemann surface of genus $g$, and $G$ be either $\mathrm{GL}_n$ or $\mathrm{SL}_n$. 
The geometric P=W conjecture holds for $M_{\mathrm{B}}(C,G)$ if either $n=1$ or $g=1$.
\end{customthmtwo}

In arbitrary genus, we determine some topological invariants of $\mathcal{D}(\partial M_{\mathrm{B}}(C,G))$, which can be regarded as partial evidence for the geometric P=W conjecture.

\begin{customthmtwo}{C}
[\cref{cor:rational homology}, \cref{thm:simplyconnectedness}]\label{thmintro:simplyconnected}
Let $C$ be a compact Riemann surface of genus $g$, and $G$ be either $\mathrm{GL}_n$ or $\mathrm{SL}_n$.
The following facts hold:
\begin{itemize}
    \item $\mathcal{D} (\partial M_{\mathrm{B}}(C,G))$ has the rational homology of a sphere of dimension equal to $\dim M_{\mathrm{B}}(C,G) -1$ if $n=2$;
    \item $\mathcal{D}(\partial M_{\mathrm{B}}(C,G))$ is simply-connected unless $\dim M_{\mathrm{B}}(C,G)=2$, i.e. unless $g=1$ and $G=\C^*, \SL_2$.
\end{itemize}
\end{customthmtwo}

\subsection{The PL-homeomorphism type of the dual boundary complex of character varieties}
In order to address the geometric P=W conjecture, one must make sense of $\mathcal{D}(\partial M_{\mathrm{B}}(C,G))$.
It is not a priori clear how one can do so, since the affine variety $M_{\mathrm{B}}(C,G)$ can be singular, hence it may not admit an snc compactification. Thus, the task is to find mildly singular compactifications to which a dual complex is still attached. 

Our solution is to consider \emph{divisorial log terminal (dlt) compactifications}; see \eqref{subsection dlt modification} for a definition. Indeed, such compactifications have well-defined dual complex, whose homotopy type is independent of 
the choice of a specific dlt compactification by \cite{deFernexKollarXu2012}. In \cref{lem:dltcomp} we show that dlt compactifications do exist for Betti moduli spaces of compact Riemann surfaces of any genus and structural group either $\mathrm{GL}_n$ or $\mathrm{SL}_n$. 

Among all possible dlt compactifications of $M_{\mathrm{B}}(C,G)$, it is convenient to restrict to special ones, more precisely to \emph{dlt log Calabi--Yau (logCY) compactifications} (if they exist!). This is an algebraic condition which rigidifies the configuration of divisors at infinity: 
the dual complex of any dlt logCY compactification identifies a distinguished PL-homeomorphism class 
in the homotopy equivalence class
of the dual complex of any dlt compactification. Not all quasi-projective varieties admit logCY compactifications, but a folk conjecture \cite{Simpson2016} says that character varieties should do,
and this is indeed proved for $\mathrm{SL}_2$-local systems on punctured surfaces
in \cite{Whang2020}. If these compactifications were also dlt, then a positive answer to \cite[Question 4]{KollarXu} would essentially imply that $\mathcal{D}(\partial M_{\mathrm{B}}(C,G))$ is homeomorphic to a sphere. 

These observations suggest the following partial strengthening of the geometric P=W conjecture.

\begin{conjecture} \label{conj:strong}
$M_{\mathrm{B}}(C,G)$ admits a dlt logCY compactification $(\overline{M}_{\mathrm{B}}(C,G), \partial M_{\mathrm{B}}(C,G))$, and the associated dual boundary complex $\mathcal{D}(\partial M_{\mathrm{B}}(C,G))$ is $\PL$-homeomorphic to a sphere\footnote{A sphere of dimension $m$ is intended to be endowed with its standard PL-structure, i.e. with a triangulation equivalent to the boundary of an $(m+1)$-standard simplex.}.
\end{conjecture}

In \cref{sec:dualcomplexgenusone} we prove this stronger conjecture for a compact Riemann surface $\mathrm{E}$ of genus one. In particular, we obtain the following theorems.

\begin{customthmtwo}{D}
[Theorem \ref{dualcomplexhilbertscheme}] \label{thm intro P=W}
 $M_{\mathrm{B}}(\mathrm{E},\mathrm{GL}_n)$ admits a dlt logCY compactification, and the dual boundary complex $\mathcal{D}(\partial M_{\mathrm{B}}(\mathrm{E},\mathrm{GL}_n))$ of such compactification is $\PL$-homeomorphic to  $\mathbb{S}^{2n-1}$.
\end{customthmtwo}

\begin{customthmtwo}{E}
[Theorem \ref{dual boundary complex SL_n}] \label{thm intro P=W slr} $M_{\mathrm{B}}(\mathrm{E},\mathrm{SL}_n)$ admits a dlt logCY compactification, and the dual boundary complex $\mathcal{D}(\partial M_{\mathrm{B}}(\mathrm{E},\mathrm{SL}_n))$ of such compactification is $\PL$-homeomorphic to  $\mathbb{S}^{2n-3}$.
\end{customthmtwo}

It is an open question whether $M_{\mathrm{B}}(C,G)$ admits a dlt logCY compactification for $g>1$. 

\subsection{Non-Archimedean approach} 
In Theorems \ref{thm intro P=W} and \ref{thm intro P=W slr} we show that dlt logCY compactifications of $M_{\mathrm{B}}(\mathrm{E},G)$ do exist via the minimal model program: this is a non-constructive existence result, and the authors are not aware of more explicit dlt logCY compactifications. However, we can determine the PL-homeomorphism type of the dual boundary complex of such compactifications by recasting the problem in terms of non-Archimedean geometry.
In this approach, boundary divisors are thought as suitable divisorial valuations, in the sense of~\eqref{divisiorialvaluation}. 
This viewpoint permits to reinterpret the dual complex $\mathcal{D}(\partial M_{\mathrm{B}}(\mathrm{E},G))$ as the level set of a suitable function inside a space of valuations, namely as the minimality locus of a log discrepancy function, called \emph{skeleton}; see \cref{definitionesssentialskeletonlogCY} and \cref{crepantbirationalpair}. This idea is compatible with the expectation of a correspondence between valuations on the Betti moduli space and directions in the Hitchin base, conjectured in~\cite[\S 1.1]{KatzarkovNollPanditEtAl2015}. 

In particular, the valuative interpretation of dual complexes allows us to prove the following fact about their topology, which is used to prove Theorems \ref{thm intro P=W} and \ref{thm intro P=W slr}, and is of independent interest.

\begin{customthmtwo}{F}
[Theorem \ref{lem:quotientlogCYskeleton}] \label{thm dual complex intro} Let $(X, \Delta_X)$ be a log canonical pair. Let $\Gamma$ be a finite subgroup of $\Autom(X, \Delta_X)$, i.e.\ the group of automorphisms $\gamma \colon X \to X$ such that $\gamma^*\Delta_X = \Delta_X$. 
Suppose also that the quotient map $q\colon X \to X/\Gamma$ is quasi-\'{e}tale, i.e.\ the action of $\Gamma$ is free in codimension $1$. Denote $\DMR(X, \Delta_X)$ the dual complex of a dlt modification of $(X, \Delta_X)$,\footnote{The notation $\DMR$ stands for ``Dual complex of a Minimal divisorial log terminal partial Resolution".} and $\Delta_{X/\Gamma} \coloneqq q_*\Delta_{X}$. Then, there exists a $\PL$-homeomorphism
\[
    \mathcal{DMR}(X/\Gamma, \Delta_{X/\Gamma}) \simeq_{\mathrm{PL}} \mathcal{DMR}(X, \Delta_{X})/\Gamma.
\]
\end{customthmtwo}

It is worth remarking that one could determine the homotopy type of $\mathcal{D}(\partial M_{\mathrm{B}}(\mathrm{E},G))$ by using the classical notion of skeleton for toroidal pairs as in \cite{Thuillier2007}. However, the definition of skeleton in \cref{sec:dualcomplexofquotient} adapts well to the more singular case of dlt compactifications, which appear in the formulation of the geometric P=W conjecture. Further, in the logCY case, it allows to establish the actual $\PL$-homeomorphism class of the dual boundary complex. 

\subsection{Hyper-K\"{a}hler degenerations on the Betti side}\label{introsec:hyperkahler}
We propose two proofs for each of Theorems \ref{thm intro P=W} and \ref{thm intro P=W slr} (see \cref{sec:dualcomplexgenusone} and \cref{alternativeproofindiscretelyvalued}-\ref{alternativeproofdiscretevalued2}). One of the proofs requires the construction of degenerations of compact hyper-K\"{a}hler manifolds. Degeneration arguments have also been used in \cite{deCataldoMaulikShen2019} to show the cohomological P=W conjecture in genus two. There, the authors exploit a standard degeneration  of compact hyper-K\"{a}hler moduli spaces of sheaves on K3 surfaces to the Dolbeault moduli space $M_{\mathrm{Dol}}(C,\Gl)$, due to \cite[\S 1.1]{DonagiEinLazarsfeld1997}. However, no analogue is known on the Betti side, and the following \cref{intro:degenerationhyperkahler} is the first existence result in this direction.

\begin{customthmtwo}{G}
[\cref{degenerationHIlbertscheme} and \ref{construct dlt compact}] \label{intro:degenerationhyperkahler}
There exist 
\begin{itemize}
    \item a good minimal dlt degeneration $\cX$ of Hilbert schemes of K3 surfaces, or of generalised Kummer varieties,
    \item an irreducible component $\Delta^{\dlt}_i$ of the reduced special fibre $\cX_0$ of $\cX$,
\end{itemize}
 such that the pair $(\Delta_i^{\dlt} ,\Diff^*_{\Delta^{\dlt}_i}(\cX_0))$
  is crepant birational to a dlt logCY compactification of $M_{\mathrm{B}}(\mathrm{E},\mathrm{GL}_n)$ or of $M_{\mathrm{B}}(\mathrm{E},\mathrm{SL}_n)$ respectively.
\end{customthmtwo}

We postpone to \cref{preliminaries}, \cref{alternativeproofindiscretelyvalued} and \cref{alternativeproofdiscretevalued2} the explanation of all the attributes of these degenerations. Here, we just observe that \cref{intro:degenerationhyperkahler} suggests a relation of the geometric P=W conjecture with the following conjecture about dual complexes of hyper-K\"{a}hler manifolds: 

\begin{conjecture} \label{intro:conjHK}
Let $\cX$ be a maximally degenerate good minimal dlt degeneration of compact hyper-K\"{a}hler manifolds. Then the dual complex of the special fibre of $\cX$ is (PL-)homeomorphic to $\P^n(\C)$.
\end{conjecture}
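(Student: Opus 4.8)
The plan is to reduce the conjecture to the computation of the essential skeleton of the generic fibre, and then to exploit the strong constraints that hyper-K\"{a}hler Hodge theory imposes on it. The generic fibre of $\cX$ has trivial canonical class; although $\cX$ is by hypothesis already a good minimal dlt model, by~\cite{deFernexKollarXu2012} the homeomorphism type of the dual complex $\mathcal{D}(\cX_0)$ of its special fibre depends only on the generic fibre, and by~\cite{NicaiseXu} it coincides with the essential skeleton $\Skess(\cX_{\C((t))})$ of the generic fibre, a hyper-K\"{a}hler variety $Y$ of complex dimension $2n$. So the statement is intrinsic to the pair formed by $Y$ and the nilpotent logarithm of monodromy $N$, which acts on $H^2(Y,\Q)$ with $N^3 = 0$; the hypothesis of maximal unipotency is exactly that $N^2 \neq 0$.

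The second step is to determine the rational homology of $\Skess(\cX_{\C((t))})$ through the limit mixed Hodge structure. By the comparison, going back to Deligne, between the rational cohomology of a dual complex and the extreme graded pieces of the weight filtration (see~\cite{Deligne1971, Berkovich2000, Payne}), there is an isomorphism $H^k(\mathcal{D}(\cX_0),\Q) \cong W_0 H^k$ of the limit cohomology of $Y$. Combining the Clemens--Schmid exact sequence, the Fujiki relation, and the structure of $H^\bullet(Y,\Q)$ as a module over the Looijenga--Lunts--Verbitsky Lie algebra, one finds that the monodromy weight filtration satisfies $W_0 H^{2k} \cong \Q$ for $0 \le k \le n$ and vanishes in every other degree, and that the resulting cohomology ring is $\Q[h]/(h^{n+1})$ with $\deg h = 2$. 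Hence $\Skess(\cX_{\C((t))})$ is a finite CW complex of real dimension $2n$ with the rational cohomology ring of $\P^n(\C)$; this part is in essence the Type III case of the results of Koll\'ar, Laza, Sacc\`a and Voisin on degenerations of hyper-K\"{a}hler manifolds.

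The remaining, and genuinely hard, step is to promote the conclusion that $\mathcal{D}(\cX_0)$ is a rational-homology $\P^n(\C)$ to the conclusion that it is homeomorphic to $\P^n(\C)$. I would pursue two complementary routes. The first is to prove that the homeomorphism type of $\mathcal{D}(\cX_0)$ is a deformation invariant within a connected family of hyper-K\"{a}hler varieties of a fixed deformation type carrying a simultaneous maximally unipotent degeneration---the global Torelli theorem for hyper-K\"{a}hler manifolds should let one join any two such degenerations through the period domain---and then to evaluate the invariant on a convenient member: for the $K3^{[n]}$ type one takes $Y = S^{[n]}$ with $S$ a K3 surface carrying a Kulikov Type III degeneration $\mathcal{S} \to \Delta$, and the relative Hilbert scheme $\mathcal{S}^{[n]}$ yields a degeneration whose special fibre has dual complex the $n$-th symmetric product $\Sym^n\mathcal{D}(\mathcal{S}_0) \cong \Sym^n(\mathbb{S}^2) \cong \P^n(\C)$, while an analogous construction with generalized Kummer varieties over a maximally degenerating abelian surface handles the second deformation type. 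The second route, in the spirit of the present paper, is to realize $\Skess$ of an explicit algebraic degeneration via log discrepancies and to decompose the associated Kontsevich--Soibelman skeleton into residue pieces, as in~\cref{thm intro compatible} and~\cref{thm intro ks}, identifying it with an iterated cone or join that manifestly reconstructs $\P^n(\C)$; this is precisely the mechanism by which the proofs of the character-variety theorems in the present paper already settle the conjecture for the particular hyper-K\"{a}hler degeneration attached to the genus-one Dolbeault moduli space. The main obstacle is this homeomorphism upgrade: deformation invariance of the dual complex is not formal and appears to require a good simultaneous minimal dlt model over a two-dimensional base, the O'Grady deformation types admit no Hilbert-scheme model and so demand a separate argument, and even for $S^{[n]}$ the central fibre of $\mathcal{S}^{[n]}$ is highly non-normal, so matching its stratification with the symmetric-product structure on $(\mathbb{S}^2)^{(n)}$ is a delicate combinatorial problem rather than a routine verification.
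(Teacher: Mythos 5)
The statement you are attempting to prove is a \emph{conjecture}, not a theorem: the paper states it without proof and offers it as a speculation motivated by the degeneration constructions underlying Theorems E and F (the relative Hilbert-scheme model in \S\ref{alternativeproofindiscretelyvalued} and the relative generalized-Kummer model in \S\ref{alternativeproofdiscretevalued2}), which each exhibit the essential skeleton of a particular maximally unipotent hyper-K\"ahler degeneration as a finite quotient of a join and identify it explicitly with a sphere or a complex projective space. Your Step 3 is this same mechanism: the $\mathcal{S}^{[n]}$ computation with $\mathcal{D}(\mathcal{S}^{[n]}_0) \simeq \Sym^n(\mathbb{S}^2) \simeq \P^n(\C)$ is precisely \cite[Proposition 6.2.4]{BrownMazzon}, which the paper invokes, and the Kummer analogue is \cite[Proposition 6.3.4]{BrownMazzon}. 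So for the two infinite deformation families your plan reproduces the paper's reasoning in spirit, and your Step 1 (independence of $\mathcal{D}(\cX_0)$ from the choice of good minimal dlt model, by \cite{deFernexKollarXu2012} and \cite{NicaiseXu}) is correct.

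The gaps you flag, however, are exactly what keeps this a conjecture, and they are not resolved by the proposal. Promoting ``$\mathcal{D}(\cX_0)$ has the rational cohomology ring of $\P^n(\C)$'' (the Koll\'ar--Laza--Sacc\`a--Voisin output via limit MHS and the LLV algebra) to a homeomorphism is not a gap in the proof so much as the entire problem: a finite $2n$-dimensional CW complex with the rational cohomology ring of $\P^n(\C)$ need not even be a manifold, let alone homeomorphic to $\P^n(\C)$, for $n\ge 2$. The deformation-invariance route fails as stated because \cite{deFernexKollarXu2012} gives independence of the dual complex from the \emph{choice of dlt model} of a fixed generic fibre, not invariance under \emph{deformation of the generic fibre itself}; the latter would require a simultaneous good minimal dlt model over a higher-dimensional base, whose existence is open. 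Even granting that, global Torelli gives a connected moduli space of marked hyper-K\"ahler manifolds in each deformation type, but not a connected family of maximally unipotent one-parameter degenerations along which one could propagate the homeomorphism type, and the O'Grady types have no product model to serve as an anchor. You identify all of these obstacles honestly, which makes the proposal a useful statement of the state of the art, but it is not a proof: each of the three listed obstructions is open, and the conjecture is open because of them.
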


\noindent Indeed, assume that a dlt compactification of ${M}_{\mathrm{B}}(C, G)$ appears as an irreducible component of the central fibre $\mathscr{X}_0$ of a degeneration $\cX$ as in \cref{intro:conjHK}.
Then $\mathcal{D}(M_{\mathrm{B}}(C, G))$ is a PL-sphere if $\mathcal{D}(\mathscr{X}_0)$ is a PL-manifold; see \cref{linkdualcomplex}. 

\subsection{Structure of the paper} The present paper is organised as follows. In \cref{preliminaries}, we provide some preliminaries about Berkovich spaces, log discrepancy and dual complexes. In \cref{sec:dualcomplexofquotient}, we study the dual complex of some quotients of log canonical pairs (\cref{thm dual complex intro}) by means of non-Archimedean techniques. In \cref{sec:geometricP=W}, we formulate the geometric P=W conjecture, and confirm it in rank one (\cref{introthm:P=Wconjecturegeomgenusonetext} for $n=1$). Then we study the PL-homeomorphism type of dual complex of character varieties in genus one: we show \cref{thm intro P=W}, \ref{thm intro P=W slr} and \ref{intro:degenerationhyperkahler} in \cref{sec:dualcomplexgenusone}, \cref{alternativeproofindiscretelyvalued} and \cref{alternativeproofdiscretevalued2}. The proof of the full geometric P=W conjecture in genus one (\cref{introthm:P=Wconjecturegeomgenusonetext} for $g=1$) is contained in \cref{sec:fullP=Wgenusone}. Finally, we compare the cohomological P=W conjecture and the geometric one in \cref{sec:cohomolovsgeometric} (\cref{introthm:cohomologicalvsgeometric}), and in \cref{sec:dualcomplexarbgenus} we study topological properties of dual complex of character varieties in arbitrary genus (\cref{thmintro:simplyconnected}).
In \cref{appendixindep} we check the independence of the geometric P=W conjecture of all the auxiliary choices made in \cref{subsec:defnMBMDol}. \cref{appendixA} contains some technical results on the Tate curve required for a proof of Theorem \ref{intro:degenerationhyperkahler}.

\subsection{Acknowledgements}
We would like to thank Omid Amini, S\'{e}bastien Boucksom,  Morgan Brown, Paolo Cascini, Tommaso de Fernex, Mattias Jonsson,  Luca Migliorini, Joaquín Moraga, Johannes Nicaise, 
Szil\'{a}rd Szab\'{o} and Marco Trozzo for helpful discussions on the contents of the paper. 
We wish to thank the anonymous referees for their careful reading and useful suggestions.

Mauri and Mazzon were supported by the University of Michigan, the Max Planck Institute for Mathematics, and the Engineering and Physical Sciences Research Council [EP/L015234/1]. 
The EPSRC Centre for Doctoral Training in Geometry and Number Theory (The London School of Geometry and Number Theory), University College London.
Stevenson 
was partially supported by NSF grant DMS-1600011, and he
is grateful to Imperial College London for hosting his visit, supported by the ERC Starting Grant MOTZETA (project 306610) of the European Research Council (PI: Johannes Nicaise).

\section{Preliminaries: Berkovich spaces, log discrepancy, and dual complexes}\label{preliminaries}

\spa{Let $X$ be a normal proper algebraic variety over an algebraically closed field $k$ of characteristic zero. A \textbf{pair} $(X, \Delta)$ is the datum of $X$ together with a Weil $\Q$-divisor $\Delta$ with coefficients in $(0,1]$, called boundary, such that $K_X+ \Delta$ is $\Q$-Cartier.
Write $\Delta^{=1}= \sum_{i \in I} \Delta_i$ for the union of all irreducible components of $\Delta$ whose coefficient equals $1$.
The irreducible components of the intersection $\Delta_{i_1} \cap \ldots \cap \Delta_{i_r}$, with $\{i_1, \ldots, i_r\}\subseteq I$, %of $r$ components of $\Delta^{=1}$ 
are called \textbf{strata} of $\Delta^{=1}$.

The pair $(X, \Delta)$ is \textbf{log Calabi--Yau} (logCY) if $K_X + \Delta$ is $\Q$-linearly equivalent to zero, written
$K_X + \Delta \sim_{\Q} 0 $.}

\spa{\label{def:Berkovich}
We denote by $X^{\mathrm{bir}}$ the locus of birational points of the \textbf{Berkovich analytification} of $X$. As a set, \(X^{\mathrm{bir}}\) is the space of real valuations of the fraction field $k(X)$ of $X$ that extend the trivial valuation $v_0$ of $k$, i.e. the set of functions $v\colon k(X)^* \to \mathbb{R}$
with the properties that:
\begin{enumerate}
\item $v(f \cdot g)=v(f) + v(g)$ for all $f, g \in k(X)$;
\item $v(f+g) \geqslant \min\{v(f), v(g)\}$ for all $f, g \in k(X)$;
\item $v(h)=0$ for all $h \in k^{*}$.
\end{enumerate} 
We endow $X^{\mathrm{bir}}$ with the coarsest topology which makes 
the maps $X^{\mathrm{bir}} \to \mathbb{R}$, $ v \mapsto v(f)$, continuous for any $f \in k(X)^*$.
Note that a dominant morphism of algebraic varieties $g\colon X\to Y$ induces a surjective continuous map
$g^{\mathrm{bir}}\colon X^{\mathrm{bir}} \to Y^{\mathrm{bir}}$ by precomposition $v(\cdot) \mapsto v(\cdot \circ g)$.

By the valuative criterion of properness, any valuation $v$ admits a \textbf{center} on $X$, i.e.\ there exists a unique schematic point $c_{X}(v)\in X$ such that $v(f) \geqslant 0$ for any $f \in \caO_{X,c_{X}(v)}$ and $v(f)>0$ if and only if $f \in \mathfrak{m}_{c_{X}(v)}$.}

\spa{\label{divisiorialvaluation}Given a non-negative real number $c$, a proper birational morphism $h\colon Y \to X$ of normal varieties, and an irreducible divisor $E \subset Y$, we define the \textbf{divisorial valuation}
\[c \cdot v_{E}(f) \coloneqq c \cdot \ord_E(h^*f),\]
where $\ord_E(h^*f)$ is the order of the function $h^*f$ along $E$. The set of divisorial valutions is denoted by $X^{\mathrm{div}} \subset X^{\mathrm{bir}}$, and it is dense in $X^{\mathrm{bir}}$; see for instance \cite[\S 4.4]{JonssonMustata}.
}

\spa{Let $(X, \Delta)$ be a pair, and $c \cdot v_{E} \in X^{\mathrm{div}}$ be the divisorial valuation determined by the triple $(c, h: Y \to X, E)$.
Pick canonical divisors $K_Y$ and $K_X$ such that $h_*(K_Y) = K_X$.
The \textbf{log discrepancy} of $c \cdot v_{E}$ is the value
\begin{equation}\label{log discrepancy 2}
A_{(X,\Delta)}(c \cdot v_{E}) \coloneqq c \left( 1 + \ord_E\left(K_Y - \frac{1}{m}h^*(m(K_X + \Delta))\right) \right)
\end{equation}
for $m \in \Z_{>0}$ sufficiently divisible. It is easy to see that the log discrepancy $A_{(X,\Delta)}(c \cdot v_{E})$ depends only on $c \cdot v_{E}$, and not on the choice of $m$ or of the birational model $Y$ of $X$ where the centre of $c \cdot v_{E}$ has codimension one. 
%We refer to 
Define the \textbf{log discrepancy function} $A_{(X,\Delta)} \colon X^{\mathrm{bir}} \to \R \cup \{\infty\}$ as the maximal lower-semicontinuous extension of $A_{(X,\Delta)} \colon X^{\mathrm{div}} \to \R$; see for instance ~\cite[\S 5]{JonssonMustata}, ~\cite{BoucksomFernexFavreEtAl2015}, ~\cite[Appendix A]{BoucksomJonssona}.

The pair $(X,\Delta)$ is \textbf{log canonical} (lc) if $A_{(X,\Delta)}(v) \geqslant 0$ for all $v \in X^{\mathrm{bir}}$. Two pairs $(X, \Delta_X)$ and $(Y, \Delta_Y)$ are \textbf{crepant birational} if $X$ and $Y$ are birational in such a way that $A_{(X, \Delta_X)}=A_{(Y, \Delta_Y)}$ as functions on $X^{\mathrm{bir}}=Y^{\mathrm{bir}}$.
}

\spa{\label{subsection dlt modification} Given an lc pair $(X, \Delta)$, an \textbf{lc centre} of the pair is the centre in $X$ of a divisorial valuation $v \in X^{\mathrm{div}}$ with $A_{(X, \Delta)}(v)=0$. 
The \textbf{snc locus} $X^{\mathrm{snc}}$ is the largest open subset in $X$ such that the pair $(X, \Delta)$ has only simple normal crossings (snc). The pair $(X, \Delta)$ is said to be \textbf{divisorial log terminal} (dlt) if none of its lc centres are contained in $X \setminus X^{\mathrm{snc}}$. In particular, the lc centres of a dlt pair are the strata of $\Delta^{=1}$. See~\cite[Definition 2.37]{KollarMori} and \cite[Theorem 4.16]{KollarKovacs} for more details.}

\spa{There are several advantages to working with dlt pairs over snc pairs.
Most notably, we
use the fact that any lc pair $(X, \Delta)$ is crepant birational to a (non-unique) dlt pair $(X^{\text{dlt}}, \Delta^{\text{dlt}})$, while the corresponding statement fails in general for snc pairs. 
This fact is a consequence of the existence of \textbf{dlt modifications}, as in~\cite[Corollary 1.36]{Kollar2013}, which asserts that there exists a proper birational morphism $g\colon X^{\text{dlt}}\to X$ with exceptional divisors $\{ E_i\}_{i \in I}$ such that
\begin{enumerate}
    \item (dlt) the pair $(X^{\text{dlt}}, \Delta^{\text{dlt}}\coloneqq g_*^{-1}\Delta + \sum_{i \in I} E_i)$ is dlt, where $g_*^{-1}\Delta$ is the strict transform of $\Delta$ via $g$;
    \item (crepant) $K_{X^{\text{dlt}}}+ \Delta^{\text{dlt}} \sim_\Q g^*(K_X+\Delta)$.
\end{enumerate}
Denote by $\Autom(X, \Delta)$ the group of automorphism $\gamma \colon X \to X$ such that $\gamma^*\Delta_X = \Delta_X$. Given a finite subgroup $\Gamma \subseteq \Autom(X, \Delta)$, dlt modifications of $(X, \Delta_X)$ can be chosen $\Gamma$-equivariant. Indeed, the argument of \cite[Corollary 1.36]{Kollar2013} works in this context too if we replace ordinary log resolutions and MMP with their $G$-equivariant analogues.
}

\spa{\label{dltmodification}It is always possible to associate a combinatorial object to a dlt pair $(X,\Delta)$:}
\begin{definition}\label{dual intersection complex definition}The \textbf{dual complex} of a dlt boundary $\Delta$
is the regular cell complex\footnote{See \cite[\S 2.1]{Hatcher} for the definition of regular cell complex or regular $\Delta$-complex.} $\mathcal{D}(\Delta)$ whose vertices are in correspondence with the irreducible components of $\Delta^{=1}$, and whose $r$-dimensional cells correspond to strata of codimension $r+1$. The attaching maps are prescribed by the inclusion relations. 
\end{definition}

For a general lc pair, the same prescription does not always yield a meaningful or well-defined cell complex; see for instance \cite[Remark 4.6]{Maurisurvey}. So the dual complex of an lc pair $(X, \Delta)$ is defined as the $\PL$-homeomorphism class of the dual complex of any dlt modification of $(X,\Delta)$, and we denote it by $\mathcal{DMR}(X, \Delta)$; the notation is an abbreviation for Dual complex of a Minimal dlt partial Resolution. 
The $\PL$-homeomorphism class $\mathcal{DMR}(X,\Delta)$ is well-defined, as it is independent of the choice of a dlt modification by~\cite[Definition 15]{deFernexKollarXu2012}.

\spa{\label{spa:minimal}
If a variety $V$ admits a \textbf{dlt compactification}, i.e.\ there exists a reduced proper dlt pair $(\overline{V}, \partial V)$ with $V = \overline{V} \setminus \partial V$, then the \textbf{dual boundary complex} of $V$ is $\mathcal{D}(\partial V)$, and it is independent of the choice of the dlt compactification up to homotopy by \cite[Theorem 3]{deFernexKollarXu2012}. If $\overline{V}$ is a dlt \textbf{logCY compactification}, i.e.\ we also require that $K_{\overline{V}}+ \partial V \sim_{\Q} 0$, then $\mathcal{D}(\partial V)$ is uniquely determined up to $\PL$-homeomorphism. Indeed, the logCY assumption fixes a distinguished class of compactifications of $V$, all crepant birational to each other: it selects a specific $\PL$-homeomorphism type for $\mathcal{D}(\partial V)$ by \cite[Proposition 11]{deFernexKollarXu2012}.
}

\spa{We refer to \cite{RourkeSanderson1982} or \cite{Lurie20092} for the elementary notions of $\PL$-topology used in this paper.}

\section{Dual complex of quotient of lc pairs}\label{sec:dualcomplexofquotient}

Let $(X, \Delta_X)$ be an lc pair. The proof of the existence of dlt modifications (cf~\cite[Corollary 1.36]{Kollar2013}) is non-constructive, which makes the explicit description of $\mathcal{DMR}(X,\Delta_X)$ a very subtle task. Inspired by~\cite[Proposition 5.6]{BoucksomJonsson2017}, we provide an alternative definition of $\mathcal{DMR}(X,\Delta_X)$ independent of the construction of a dlt modification. For some crepant quotient of dlt pairs, this gives an effective recipe to determine the $\PL$-homeomorphism type of $\mathcal{DMR}(X,\Delta_X)$.

\begin{definition}\label{definitionesssentialskeletonlogCY}
Let $(X, \Delta_X)$ be an lc pair.
The \textbf{skeleton} of $(X,\Delta_X)$ is
\begin{align*}
\Skess(X, \Delta_X) & \coloneqq \{v \in X^{\mathrm{bir}} \colon A_{(X, \Delta_X)}(v)=0 \}.
\end{align*}
The \textbf{link} of the skeleton $\Skess(X, \Delta_X)$, denoted by 
$\Skess(X, \Delta_X)^*/\R_{>0}$, is the quotient of the punctured skeleton $\Skess(X, \Delta_X)^*\coloneqq \Skess(X, \Delta_X) \setminus \{ v_0 \}$ via the rescaling action of the positive real numbers $\R_{>0}$.
\end{definition}

\begin{lemma}\label{crepantbirationalpair}
If two lc pairs $(X, \Delta_X)$ and $(Y, \Delta_Y)$ are crepant birational, then
\begin{equation}\label{equation crepant birational skeletons}
\Skess(X, \Delta_X) =\Skess(Y, \Delta_Y)  \text{ and } \Skess(X, \Delta_X)^*/\R_{>0} = \Skess(Y, \Delta_Y)^*/\R_{>0}.
\end{equation}
\end{lemma}
\begin{proof}
The equalities ~\cref{equation crepant birational skeletons} follow from the fact that $A_{(X, \Delta_X)}=A_{(Y, \Delta_Y)}$ on $X^{\text{bir}}=Y^{\text{bir}}$.
\end{proof}

\begin{lemma}\label{lem:PLstructures}
Let $(X, \Delta_X)$ be a lc pair. Given any dlt modification $(X^{\mathrm{dlt}}, \Delta^{\mathrm{dlt}})$ of $(X, \Delta_X)$, there exists a distinguished homeomorphism $i(\Delta^{\mathrm{dlt}})\colon \Cone(\mathcal{D}(\Delta^{\mathrm{dlt}})) \to \Sk(X, \Delta_X)$ so that the natural PL-structure on $\Cone(\mathcal{D}(\Delta^{\mathrm{dlt}}))$ induces a PL-structure on $\Sk(X, \Delta_X)$ which is independent of the choice of the dlt modification. In particular, there exist PL-homeomorphisms
\begin{equation}\label{eq: homeoskelDMR}
\Skess(X, \Delta_X) \simeq_{\mathrm{PL}} \Cone(\mathcal{DMR}(X, \Delta_X)) \text{ and } \Skess(X, \Delta_X)^*/\R_{>0} \simeq_{\mathrm{PL}}\mathcal{DMR}(X, \Delta_X).
\end{equation}
\end{lemma}

\begin{proof}
Let $(X^{\mathrm{dlt}}, \Delta^{\mathrm{dlt}}=\sum_{i \in I} \Delta_i)$ be a dlt modification of $(X, \Delta_X)$. Any lc centre $W$ of $(X^{\mathrm{dlt}}, \Delta^{\mathrm{dlt}})$ is an irreducible component of $\bigcap_{j \in J}\Delta_j$, for some $J \subseteq I$. At the generic point $\eta_W$ of $W$, the dlt condition guarantees that any function $f \in \mathcal{O}_{X, \eta_W}$ can be written in $\hat{\mathcal{O}}_{X, \eta_W}$ as $f=\sum_{\beta \in \N^{J}} c_{\beta} z^{\beta}$ where $z_j$ is a local equation for $\Delta_j$ at $\eta_W$. Denote by $\Cone(\sigma_W)$ the corresponding cone in $\Cone(\mathcal{D}(\Delta^{\mathrm{dlt}}))$, which we identify with the standard orthant $\R^J_{\geq 0}$ in $\R^J$. For any $\underline{w}=(w_j)_{j \in J} \in \Cone(\sigma_W)$ we associate the valuation $v_{\underline{w}} \in (X^{\mathrm{dlt}})^{\mathrm{bir}}=X^{\mathrm{bir}}$ defined by
\[v_{\underline{w}}\bigg(\sum_{\beta \in \N^{J}} c_{\beta} z^{\beta}\bigg)= \min\bigg\lbrace\sum_{j} w_j \beta_j | \, c_{\beta}\neq 0\bigg\rbrace.\]
Varying the lc centre $W$, we obtain a map 
\begin{equation}\label{eq:homeodualcomplex}
i(\Delta^{\mathrm{dlt}})\colon \Cone(\mathcal{D}(\Delta^{\mathrm{dlt}})) \to X^{\mathrm{bir}}, 
\end{equation}
which is a homeomorphism onto $\Skess(X^{\mathrm{dlt}}, \Delta^{\mathrm{dlt}})=\Skess(X, \Delta_X)$ by \cite[Proposition 5.6]{BoucksomJonsson2017}\footnote{Observe that all the valuations in \cite{BoucksomJonsson2017} are normalised by the choice of a uniformiser while we do not assume any normalisation. However, the proof of \cite[Proposition 5.6]{BoucksomJonsson2017} works in our context too, provided that one replaces $\mathcal{D}(\Delta^{\mathrm{dlt}})$ with its cone.} and \cref{crepantbirationalpair}. 

The cone complex $\Cone(\mathcal{D}(\Delta^{\mathrm{dlt}}))$ has a natural PL-structure, and so $\Skess(X, \Delta_X)$ inherits a PL-structure via the homeomorphism $i(\Delta^{\mathrm{dlt}})$. We show now that the PL-structure on $\Skess(X, \Delta_X)$ is independent of the choice of the dlt modification $(X^{\mathrm{dlt}}, \Delta^{\mathrm{dlt}})$. 
Given dlt modifications $(X^{\mathrm{dlt}}_i, \Delta^{\mathrm{dlt}}_{i})$ of $(X, \Delta_X)$, with $i=1,2$, there exists a sequence of crepant birational maps 
\[
(X^{\mathrm{dlt}}_1, \Delta^{\mathrm{dlt}}_{1}) \xleftarrow{p_1}
(Y_0, \Delta_{Y_0}) \overset{\pi_0}{\dashrightarrow} (Y_1, \Delta_{Y_1}) \overset{\pi_1}{\dashrightarrow} \ldots   (Y_{m}, \Delta_{Y_{m}}) \overset{\pi_{m}}{\dashrightarrow}  (Y_{m+1}, \Delta_{Y_{m+1}}) \ldots \overset{\pi_{r-1}}{\dashrightarrow}   (Y_r, \Delta_{Y_r}) \xrightarrow{p_2} (X^{\mathrm{dlt}}_2, \Delta^{\mathrm{dlt}}_2)
\]
such that
\begin{itemize}
    \item $(Y_i, \Delta_{Y_i})$ are snc sub-pairs, i.e.\ $\Delta_{Y_i}$ is a simple normal crossing divisor with coefficients in $(-\infty, 1]$; 
    \item the maps $\pi_i$ are smooth blow-ups or their inverse;
    \item $\mathcal{D}(\Delta^{\mathrm{dlt}}_{1})=\mathcal{D}(\Delta_{Y_0})$, $i(\Delta^{\mathrm{dlt}}_{1})=i(\Delta_{Y_0})$, $\mathcal{D}(\Delta^{\mathrm{dlt}}_{2})=\mathcal{D}(\Delta_{Y_r})$, $i(\Delta^{\mathrm{dlt}}_{2})=i(\Delta_{Y_r})$;
    \item $\mathcal{D}(\Delta_{Y_m})=\mathcal{D}(\Delta_{Y_{m+1}})$, or $\mathcal{D}(\Delta_{Y_m})$ differs from $\mathcal{D}(\Delta_{Y_{m+1}})$ by a stellar subdivision or its inverse;
\end{itemize}
see \cite[Proposition 11]{deFernexKollarXu2012} and references therein.
Note that the definition of log discrepancy, dual complex and skeleton, and \cref{crepantbirationalpair} extend to lc sub-pairs with no change. Hence, there exists a homeomorphism \[i(\Delta_{Y_m}) \colon \Cone(\mathcal{D}(\Delta_{Y_m}))\to \Sk(Y_m, \Delta_{Y_m})=\Sk(X, \Delta_{X})\]
defined as above in the context of pairs. 

If we prove that the composition $i(\Delta_{Y_{m+1}})^{-1}|_{\Sk(X, \Delta_X)} \circ i(\Delta_{Y_{m}}): \Cone(\mathcal{D}(\Delta_{Y_{m}}))\to \Cone(\mathcal{D}(\Delta_{Y_{m+1}}))$ is a PL-homeomorphism for all $0 \leq m \leq r-1$, then the PL-structures on $\Sk(X, \Delta_X)$ induced by the PL-structure of $\mathcal{D}( \Delta^{\mathrm{dlt}}_{i})$ via $i(\Delta^{\mathrm{dlt}}_i)$, with $i=1,2$, will coincide as desired. 

There are two cases: either $\mathcal{D}(\Delta_{Y_m})=\mathcal{D}(\Delta_{Y_{m+1}})$, in which case $i(\Delta_{Y_{m+1}})^{-1}|_{\Sk(X, \Delta_X)} \circ i(\Delta_{Y_{m}})$ is the identity and there is nothing to prove; or the map $\pi_m \colon (Y_{m}, \Delta^{=1}_{m}) \to (Y_{m+1},\Delta^{=1}_{m+1}=\sum_{k \in K}\Delta_{Y_{m+1},k})$, or its inverse, is a blow-up of an irreducible component $W_{m+1}$ of $\bigcap_{l \in L}\Delta_{Y_{m+1},l}$ with $L \subseteq K$, and $\mathcal{D}(\Delta_{Y_m})$ differs from $\mathcal{D}(\Delta_{Y_{m+1}})$ by a stellar subdivision corresponding to an interior point $v_E$ of the cell $\sigma_{W_{m+1}}$. In the latter case, the map $i(\Delta_{Y_{m+1}})^{-1}|_{\Sk(X, \Delta_X)} \circ i(\Delta_{Y_{m}})$ can be identified over $\Cone(\sigma_{W_{m+1}})$ with the linear map
\[\mathfrak{l}\colon \bigg\{ \prod_{l \in L}w_l=0\bigg\} \subset \R^{L \cup \{v_E\}}_{\geq 0} \to \R^{L}_{\geq 0},\]
given by $\mathfrak{l}(e_l)=e_l$ and $\mathfrak{l}(v_E)=\sum_{l \in L}e_l$, and with the identity map away from $\Cone(\sigma_{W_{m+1}})$. This says that  $i(\Delta_{Y_{m+1}})^{-1}|_{\Sk(X, \Delta_X)} \circ i(\Delta_{Y_{m}})$ is indeed a PL-homeomorphism.
\end{proof}

\begin{theorem}[Theorem F]\label{lem:quotientlogCYskeleton}
Let $(X, \Delta_X)$ be an lc pair. Let $\Gamma$ be a finite subgroup of $\Autom(X, \Delta_X)$. %a finite group acting on $X$ via crepant automorphisms, i.e. $g^*(K_X +\Delta_X)=K_X+\Delta_X$ for $g\in G$. 
Suppose also that the quotient map $q\colon X \to X/\Gamma$ is quasi-\'{e}tale, i.e.\ the action of $\Gamma$ is free in codimension $1$. %\'{e}tale away from a subscheme of codimension $\geqslant 2$. 
Then,
\begin{equation}\label{equation quotient of skeleton}
    \Skess(X/\Gamma, \Delta_{X/\Gamma}\coloneqq q_*\Delta_X)= q^{\mathrm{bir}}(\Skess(X, \Delta_{X}))\simeq_{\mathrm{\PL}} \Skess(X, \Delta_{X})/\Gamma.
\end{equation}
In particular, there exist $\PL$-homeomorphisms
\begin{align}\label{equation quotient of skeleton slice}
    \Skess(X/\Gamma, \Delta_{X/\Gamma})^*/\R_{>0} & \simeq_{\mathrm{\PL}} \Skess(X, \Delta_{X})^*/(\R_{>0}\times \Gamma),\\
    \mathcal{DMR}(X/\Gamma, \Delta_{X/\Gamma}) & \simeq_{\mathrm{PL}} \mathcal{DMR}(X, \Delta_{X})/\Gamma.\label{eq:dualcomplex}
\end{align}
\end{theorem}
\begin{proof}
Observe first that the skeleton $\Skess(X/\Gamma, \Delta_{X/\Gamma})$ is well-defined, since the pair $(X/\Gamma, \Delta_{X/\Gamma})$ is lc by ~\cite[Proposition 5.20]{KollarMori}. The group $\Gamma$ acts on a $\Gamma$-equivariant dlt modification $(X^{\mathrm{dlt}}, \Delta^{\mathrm{dlt}}) \to (X, \Delta_X)$ (cf \eqref{dltmodification}) and it permutes the lc centres of the pair $(X^{\mathrm{dlt}}, \Delta^{\mathrm{dlt}})$. Therefore, $\Gamma$ acts on $\mathcal{DMR}(X, \Delta_{X}) \simeq_{\mathrm{PL}} \mathcal{D}(\Delta^{\mathrm{dlt}})$ via PL-homeomorphism.
%Further, the group $\Gamma$ acts on $\mathcal{DMR}(X, \Delta_{X})$ via PL-homeomorphism because of the existence of $\Gamma$-equivariant dlt modifications of $(X, \Delta_X)$; see \eqref{dltmodification}.

It is well-known that the log discrepancy function $A_{(X/\Gamma, \Delta_{X/\Gamma})}$ pulls back via $q^{\mathrm{bir}}$ to the log discrepancy function $A_{(X, \Delta_{X})}$; see \cite[Proposition 3.16]{Kollar1997}. Since $q^{\mathrm{bir}}$ is surjective, this proves the first equality of \eqref{equation quotient of skeleton}. Now, as $\Gamma \subseteq \Aut(X, \Delta_X)$, the induced $\Gamma$-action on $X^{\mathrm{bir}}$ preserves $\Skess(X, \Delta_{X})$, and $q^{\mathrm{bir}}|_{\Skess}$ is the quotient map; see \cite[Corollary 5]{Berkovich1995}.
%By (\cref{def:Berkovich}),  $(q^{\mathrm{bir}})^*\mathcal{A}_{(X/\Gamma, \Delta_{X/\Gamma})}$ and $(g^{\mathrm{bir}})^*\mathcal{A}_{(X, \Delta_{X})}$, for $g\in G$, are contained in $\mathcal{A}_{(X, \Delta_X)}$, so by \cref{lem:approximation} 
By \cref{lem:PLmaps}, the group $\Gamma$ acts on $\Skess(X, \Delta_{X})$ via PL-homeomorphisms and the quotient $q^{\mathrm{bir}}|_{\Skess}$ is a $\PL$-map. Hence, $q^{\mathrm{bir}}|_{\Skess}$ descends to the $\PL$-homeomorphism ~\cref{equation quotient of skeleton}. 
Further, since the actions of $\Gamma$ and $\R_{>0}$ commute and the homeomorphism ~\cref{equation quotient of skeleton} is $\R_{>0}$-equivariant, we have  that ~\cref{equation quotient of skeleton slice} and \eqref{eq:dualcomplex} are $\PL$-homeomorphisms too.
\end{proof}

\begin{lemma}\label{lem:PLmaps}
Let $(Y_1, \Delta_{Y_1})$, $(Y_2, \Delta_{Y_2})$ be lc pairs, and $g: Y_1 \to Y_2$ be an algebraic morphism such that the induced map $g^{\mathrm{bir}}: Y_1^{\mathrm{bir}} \to Y_2^{\mathrm{bir}}$ restricts to a map $g^{\mathrm{bir}}|_{\Sk}: \Skess(Y_1, \Delta_{Y_1}) \to \Skess(Y_2, \Delta_{Y_2})$ on skeletons. Then $g^{\mathrm{bir}}|_{\Sk}$ is a PL-map, with respect to the PL-structures defined in \cref{lem:PLstructures}.
\end{lemma}

\begin{proof}
Let $(Y^{\mathrm{dlt}}_i, \Delta^{\mathrm{dlt}}_{Y_i})$ be a dlt modification of $(Y_i, \Delta_i)$, with $i=1,2$. For any  lc centre $W \subseteq \Delta^{\mathrm{dlt}}_{Y_i}$, denote $\Cone(\sigma_W)$ the corresponding cone in $ \Cone(\DMR(Y^{\mathrm{dlt}}_i, \Delta^{\mathrm{dlt}}_{Y_i}))\simeq \Skess(Y_i, \Delta_{Y_i})$, namely the subset of $\Skess(Y_i, \Delta_{Y_i})$ of valuations centred at $W$. For each divisor $\Delta_j$ such that $W \subseteq \Delta_j \subseteq \Delta^{\mathrm{dlt}}_{Y_i}$, choose local equation $z_{W, j, i}\in k(Y_i)^*$ at the generic point of $W$. Then the map 
\begin{equation}\label{eq:tropicalization}
    \Cone(\sigma_W) \to \R^{\codim_{Y_i}W}, \qquad v \to (\ldots, v(z_{W,j,i}), \ldots)
\end{equation}
is an embedding onto the standard orthant in $\R^{\codim_{Y_i}W}$. 
Consider now the commutative diagram 
\[
\xymatrix{
\Skess(Y_1, \Delta_{Y_1}) \ar[r]\ar[d]_{g^{\mathrm{bir}}|_{\Sk}} & \R^{m_1+m_2}\ar[d]^{\mathrm{pr}_2} & v \mapsto (\ldots, v(z_{W,j,1}),\ldots,v(z_{W,j,2}\circ g), \ldots)    \\
\Skess(Y_2, \Delta_{Y_2})\ar[r]& \R^{m_2} &  w \mapsto (\ldots, w(z_{W,j,2}),\ldots), \qquad\qquad\qquad\quad
}
\]
where the index $W$ runs over all lc centres of $\Delta^{\mathrm{dlt}}_i$, $m_i \coloneqq \sum_{W \subseteq \Delta^{\mathrm{dlt}}_{Y_i}} \codim_{Y_i} W$, and $j=1, \ldots, \codim_{Y_i} W$. 

The square is commutative by \eqref{def:Berkovich}. For any $h \in k(Y_i)^*$, the map $\Sk(Y_i, \Delta_{Y_i}) \to \R$ given by $v \mapsto v(h)$ is piecewise linear by the argument in \cite[Proposition 3.2.2]{MustataNicaise}, and so the horizontal arrows are PL-maps. In fact, they are PL-homeomorphisms onto their images since \eqref{eq:tropicalization} is an embedding. Finally, we conclude that $g^{\mathrm{bir}}|_{\Sk}$ is a PL-map, because via the square diagram it can be identified with the restriction of $\mathrm{pr}_2$.
\end{proof}

\section{The geometric P=W conjecture}\label{sec:geometricP=W}
\subsection{Preliminaries: semialgebraic neighbourhoods}
\begin{definition} \emph{\cite[Definition 3.1]{D83}} Let $M$ be a real (semi)algebraic set in $\R^n$, and $Y$ be a compact (semi)algebraic set in $M$, e.g. an algebraic subvariety of a complex projective variety $M$. A subset \(N \subset M\) is a \textbf{(semi)algebraic neighbourhood} of \(Y\) if there exist
\begin{itemize}
    \item[-]  a \textbf{rug function} \(\beta\colon M \to \R\), i.e. a proper real (semi)algebraic map such that \(\beta(x)\geq 0\) and \(Y=\beta^{-1}(0)\); 
    \item[-] a suitably small positive number \(\delta\) such that \(N = \beta^{-1}[0, \delta)\). 
\end{itemize}

The set \(N^*\coloneqq N \setminus Y\) is called a \textbf{(semi)algebraic deleted neighbourhood} of \(Y\), or simply a \textbf{neighbourhood at infinity} of $N$. The \textbf{link} of \(Y\) in \(M\) is \(\partial N \coloneqq \beta^{-1}(\delta)\).
\end{definition}

\begin{proposition}[Uniqueness of semialgebraic neighbourhoods] \emph{\cite[Proposition 3.5]{D83}}\label{uniqueness} Any two semialgebraic neighbourhoods $N_0$ and $N_1$ of \(Y\) in \(M\) are stratified isotopic, i.e.\ there exists a continuous family of homeomorphisms $h_t\colon M\to M$, with $t\in[0,1]$, preserving a Whitney stratification of $M$, and such that: (i) $h_0$ is the identity; (ii) $h_t|_Y$ is the identity for any $t$; (iii) $h_1(N_0)=N_1$.
\end{proposition}

\subsection{Dolbeault and Betti moduli spaces at infinity}
\label{subsec:defnMBMDol}
Let $C$ be a compact Riemann surface of genus $g$, and $G$ be a complex reductive group. The non-abelian Hodge correspondence is a real analytic isomorphism
\[\Psi\colon M_{\mathrm{Dol}} \to M_{\mathrm{B}}\]
between two moduli spaces parametrising $G$-local systems on $C$ in different fashions; see \cite{Simpson1994} and \cite{Simpson1997}.

The first moduli space $M_{\mathrm{Dol}}=M_{\mathrm{Dol}}(C, G)$, called \textbf{Dolbeault moduli space}, parametrises semistable $G$-Higgs bundles, i.e.\ semistable pairs $(\mathcal{E}, \phi)$ consisting of a principal $G$-bundle $\mathcal{E}$ of degree zero and a section $\phi \in H^0(C, \operatorname{ad}(\mathcal{E})\otimes K_C)$, where $K_C$ is the canonical bundle. It is endowed with a projective morphism called \textbf{Hitchin fibration}
\begin{equation}\label{eq:Hitchinfibration}
    \chi\colon M_{\mathrm{Dol}} \to \Lambda \simeq \C^{N/2} \quad (N \coloneqq \dim_{\C} M_{\mathrm{Dol}}),
\end{equation}
%For $G=\Gl$ or $\Sl$, $\phi$ is we can identify $\Lambda$ with $\bigoplus^n_{i=0} H^0(C, iK_C)$ or $\bigoplus^n_{i=1} H^0(C, iK_C)$ respectively. 
which is Lagrangian with respect to the canonical holomorphic
symplectic form on $M_{\mathrm{Dol}}$ given by the standard hyper-K\"{a}hler metric on $M_{\mathrm{Dol}}$; see \cite{Hitchin1987}. This implies in particular that the general fibre of $\chi$ is an abelian variety. 

The second moduli space is the \textbf{Betti moduli space}, or $G$-character variety. It is the affine GIT quotient
    \begin{equation}\label{eq:characterdefn}
        M_{\mathrm{B}}=M_{\mathrm{B}}(C, G) \coloneqq \Hom(\pi_1(C), G)\sslash G \simeq \bigg\{ (A_1, B_1, \ldots, A_{g}, B_{g}) \in G^{2g} \, \big| \, \prod^{g}_{j=1}[A_j, B_j]=1_{G} \bigg\}\sslash G,
    \end{equation}
parametrising isomorphism classes of semistable $G$-representations of the fundamental group of $C$.

The spaces $M_{\mathrm{Dol}}$ and $M_{\mathrm{B}}$ are non-proper quasi-projective varieties. The goal of the geometric P=W conjecture is to describe the asymptotic behaviour of the non-abelian Hodge correspondence $\Psi$ as we approach the boundary of algebraic compactifications of $M_{\mathrm{Dol}}$ and $M_{\mathrm{B}}$. 

The moduli space $M_{\mathrm{Dol}}$ admits a standard projective compactification denoted $\overline{M}_{\mathrm{Dol}}$. It was constructed by Hausel \cite{Hausel1998}, Simpson \cite[\S 11]{Simpson1997} and de Cataldo \cite[Theorem 3.1.1]{deCataldo21} via symplectic cut or via an equivalent algebraic quotient construction. A remarkable property of this compactification is that the Hitchin fibration $\chi$ on $M_{\mathrm{Dol}}$ has a regular extension $\overline{\chi}$ to the entire $\overline{M}_{\mathrm{Dol}}$, i.e. there exist Cartesian diagrams
\[
 \begin{tikzcd}[scale=1.1]
   \partial M_{\mathrm{Dol}}\coloneqq \overline{M}_{\mathrm{Dol}}\setminus M_{\mathrm{Dol}}  \arrow[hookrightarrow, r] \arrow[d] & \overline{M}_{\mathrm{Dol}} \arrow[d, "\overline{\chi}"] & M_{\mathrm{Dol}} \arrow[hook', l] \arrow[d, "\chi"]\\
   \partial \Lambda \arrow[hookrightarrow, r] & \overline{\Lambda}& \Lambda. \arrow[hook', l]
    \end{tikzcd}
\]
Here the boundary $\partial M_{\text{Dol}}$ and $\partial \Lambda$ are prime divisors in $\overline{M}_{\mathrm{Dol}}$ and $\overline{\Lambda}$ respectively, and $\overline{\Lambda}$ and $\partial \Lambda$ are suitable weighted projective spaces. 

Now let $\mathbb{B}^*$ be an algebraic deleted neighbourhood of $W$, and $N^*_{\mathrm{Dol}}$ be its preimage. The subset $N^*_{\mathrm{Dol}}$ is an algebraic deleted neighbourhood of $\partial M_{\mathrm{Dol}}$ defined by the pullback of the rug function defining $\mathbb{B}^*$, and so independent of the choice of $\mathbb{B}^*$ up to stratified isotopy by \cref{uniqueness}. The neighbourhood $\mathbb{B}^*$ has the homotopy type of $\mathbb{S}^{N-1}$, and indeed we may view $\mathbb{S}^{N-1}$ as the quotient of $\mathbb{B}^*$ by
radial scaling, so the Hitchin map gives a map
 \[
N^*_{\mathrm{Dol}} \xrightarrow{\chi} \mathbb{B}^* \to  \mathbb{S}^{N-1},
 \]
 that by abuse of notation we continue to denote by $\chi$.  
 
On the Betti side, there is no standard algebraic compactification of $M_{\mathrm{B}}$, and the combinatorics of the strata of a compactification of $M_{\mathrm{B}}$ is more intricate than that of the standard compactification $\overline{M}_{\mathrm{Dol}}$. However, the geometric P=W conjecture suggests that the sphere at infinity \(\mathbb{S}^{N-1}\) of the Hitchin base should be identified with the dual complex  \(\mathcal{D}(\partial M_{\mathrm{B}})\), at least when the latter is well-defined. 
To this end, we need compactifications of $M_{\mathrm{B}}$ with mild singularities. Therefore, we require the following assumption (missing in the original statement of the geometric P=W conjecture), and show in \cref{lem:dltcomp} that it is satisfied for $G=\Gl, \, \Sl$.
  
   \begin{Assumption}\label{dlt assumption}
There exists a projective reduced dlt pair \((\overline{M}_{\mathrm{B}}, \partial M_{\mathrm{B}})\) such that $ M_{\mathrm{B}}= \overline{M}_{\mathrm{B}} \setminus \partial{M}_{\mathrm{B}}$.
 \end{Assumption}
 
 Without loss of generality, we make also the following assumptions; see \cref{sec:auxiliary} and \cref{appendixindep} for a thoughtful exam of them.

 \begin{Assumption}\label{assumpdivisor}
 Any intersection of irreducible components of $\partial M_{\mathrm{B}}= \sum_{i \in I} \Delta_i$ is irreducible. 
 \end{Assumption}
 
 Let \(N^*_i \coloneqq \beta^{-1}_i(0, \delta)\) be a semialgebraic deleted neighbourhood of \(\Delta_i\), for some rug function $\beta_i$ and real number $\delta$. Then $\beta \coloneqq \min_{i \in I}\beta_i$ is a rug function for $\partial M_{\mathrm{B}}$, and so $N^*_{\mathrm{B}} \coloneqq \beta^{-1}(0, \delta) = \bigcup_{i \in I} N^*_i$ is a semialgebraic deleted neighbourhood of $\partial M_{\mathrm{B}}$. 
 
 \begin{Assumption}\label{assumpboundary} For any $J \subseteq I$, $\Delta_J \coloneqq \bigcap_{i \in J} \Delta_i$ is non-empty  
 if and only if $N_J \coloneqq \bigcap_{i \in J} N_i$ is non-empty. 
 \end{Assumption}

 Given a partition of unity \(\{\varphi_i\}_{i\in I}\) subordinate to the the open cover \(\{N^*_i\}_{i \in I}\) of $N_{\mathrm{B}}^*$, we can then define the \textbf{evaluation map} \[ev\colon N_{\mathrm{B}}^* \to \R^{I}, \quad ev(x)(i)=\varphi_i(x);\] 
see also \cite[\S 5]{EvansMauri2019}.\footnote{Given two sets $S$ and $T$, denote $S^T$ the
space of maps $T \to S$.}  
 
 \begin{Assumption}\label{independentpartition}
 The partition of unity \(\{\varphi_i\}_{i\in I}\) is \textbf{full} in the following sense.
 \begin{quote}
For any $J \subseteq I$ such that $N_J \neq \emptyset$, the map $\varphi^{J}\colon N_J \to [0,1]^{J}$, given by $\varphi^{J}(x)(j)=\varphi_j(x)$, is surjective onto the open standard simplex in $(0,1]^{J}$ given by the equation $\sum_{j \in J} y_j=1$.
 \end{quote} 
  \end{Assumption}

  Under the previous assumptions, the image of the evaluation map is the dual boundary complex \(\mathcal{D}(\partial M_{\mathrm{B}})\) as in \cite[Lemma 5.4]{EvansMauri2019} (the proof in the snc case extends to the dlt case with no change). Up to shrink $\mathbb{B}^*$, we can also suppose that $\Psi(N^*_{\mathrm{Dol}})$ embeds in $N^*_{\mathrm{B}}$, and the inclusion $\Psi(N^*_{\mathrm{Dol}})\hookrightarrow N^*_{\mathrm{B}}$ is a homotopy equivalence by \cref{lem:inclusionhomotopyequivalence}. 
\vspace{5pt}

 \begin{conjecture}[Geometric P=W conjecture]\label{conjKNPS} There exists a homotopy equivalence \(r\colon \mathbb{S}^{N-1} \to \mathcal{D}(\partial M_{\mathrm{B}})\) such that the following diagram is homotopy commutative:
 \begin{equation}\label{diagP=W}
 \begin{tikzpicture}[baseline=(current  bounding  box.east)]
 \node (A) at (0,1.5) {\(N_{\text{Dol}}^*\)};
 \node (B) at (2.5,1.5) {\(\Psi(N_{\text{Dol}}^*)\subseteq N^*_{\mathrm{B}}\)};
 \node (C) at (0,0) {\(\mathbb{S}^{N-1}\)};
 \node (D) at (3.25,0) {\(\mathcal{D}(\partial M_{\mathrm{B}}).\)};
 \draw[->,thick] (A) -- (C) node [midway,left] {\(\chi\)};
 \draw[->,thick] (3.2,1.25) -- (D) node [midway,right] {\(ev\)};
 \draw[->,thick] (A) -- (B)node [midway,above] {\(\Psi\)};
 \draw[->,thick] (C) -- (D) node [midway,above] {\(r\)};
 \end{tikzpicture}
 \end{equation}
 \end{conjecture}

We show that all the previous assumptions can be achieved in \S \ref{sec:auxiliary}. We refer to \cref{appendixindep} for the proof that the geometric P=W is independent of all the auxiliary choices made.

\subsection{Remarks on the assumptions}
\label{sec:auxiliary}
The singularities of character varieties represent a priori an obstacle to a well-posed definition of the dual boundary complex $\mathcal{D}(\partial M_{\mathrm{B}})$\, thus undermining the very formulation of the conjecture. This motivates Assumption \ref{dlt assumption}, which can be satisfied, at least when \(G\) is either \(\Gl\) or \(\Sl\). 

 \begin{lemma}[Existence of dlt compactification]\label{existence of dltcomp} Let \(X\) be an algebraic variety with \(\Q\)-factorial klt singularities, then \(X\) admits a dlt compactification.
 \end{lemma}
 \begin{proof}
 Let \(\overline{X}_1\) be any algebraic compactification of \(X\) (which exists by Nagata's compactification theorem), and \(\pi_1\colon \overline{X}_2 \to \overline{X}_1 \) be a log resolution of \(\partial X_1\coloneqq \overline{X}_1 \setminus X \). Run the \((K_{\overline{X}_2}+ \sum_i(1-\epsilon)E_i)\)-MMP following \cite{BirkarCasciniHaconEtAl}, where \(E_i\) are the \(\pi_1\)-exceptional divisors. By the \(\Q\)-factoriality, the exceptional locus is divisorial over \(X\). However, for  \(\epsilon\) sufficiently small, the MMP contracts all the exceptional divisors over \(X\), since \(X\) is klt. We conclude that the end product of the MMP, say \(\overline{X}_3 \to \overline{X}_1\), is an isomorphism over \(X\).
 Hence, \(\overline{X}_3\) is a \(\Q\)-factorial compactification of \(X\). Now we can construct a dlt modification \(\overline{X} \to \overline{X}_3\) as in \cite[Theorem 3.1]{KollarKovacs} or \cite[Theorem 4.1]{Fujino2011}. It is again an isomorphism over \(X\), thus providing the desired compactification.
 \end{proof}

 \begin{cor}\label{lem:dltcomp}
$M_{\mathrm{B}}(C, G)$ admits a dlt compactification for $G = \mathrm{GL}_n$ or $ \mathrm{SL}_n$. 
\end{cor}
\begin{proof}
By \cite[Theorems 1.2 and 1.5]{BellamySchedler2019}, $M_{\mathrm{B}}(C, G)$ has canonical and $\Q$-factorial singularities for $G = \Gl$ or $\Sl$.
\cref{existence of dltcomp} then implies the existence of dlt compactifications.
\end{proof}
 
Now let $(\overline{M}_{\mathrm{B}}, \partial M_{\mathrm{B}}= \sum_{i \in I} \Delta_i)$ be a dlt compactification of $M_{\mathrm{B}}(C,G)$, and choose neighbourhoods $N_i$ and a partition of unity as in \S \ref{subsec:defnMBMDol}. 
For any $J \subseteq I$, the evaluation map $ev$ maps the irreducible components of $\Delta_J$ to the same locus in $\R^J$, while they may correspond to different cells in the dual complex $\mathcal{D}(\partial M_{\mathrm{B}})$. To prevent this bad behaviour, we require Assumption \ref{assumpdivisor} to hold, i.e.\ that all $\Delta_J$ are irreducible, or equivalently that $\mathcal{D}(\partial M_{\mathrm{B}})$ is simplicial. This can be achieved by means of a barycentrical subdivision, and geometrically via the birational modification described in \cref{lem:dltblowup}. Applying \cref{lem:dltblowup} iteratively, we produce a new compactification of $M_{\mathrm{B}}$ which satisfies both Assumptions \ref{dlt assumption} and \ref{assumpdivisor}.

\begin{lemma}[Dlt blow-up of a stratum of a dlt pair]\label{lem:dltblowup}
Let $(X, \Delta_X)$ be a dlt pair and $W$ be a stratum. Then there exists a dlt modification $g\colon (Y, \Delta_Y) \to (X, \Delta_X)$ with a unique exceptional divisor $E_W$, and such that $g$ coincides with the blow-up of $X$ along $W$ over the snc locus $X^{\mathrm{snc}}$ of $X$. 

In particular, the dual complex $\mathcal{D}(\Delta_Y)$
is obtained from $\mathcal{D}(\Delta_X)$
by a stellar subdivision of the cell in $\mathcal{D}(\Delta_X)$
corresponding to $W$. Further, if $X$ is $\Q$-factorial, then $X \setminus \Supp(\Delta_X) \simeq Y \setminus \Supp(\Delta_Y)$.
\end{lemma}
\begin{proof}
Let $\pi_1: Y_1 \to X$ be the blow-up of $X$ along $W$ with divisorial exceptional locus $F$, and $\pi_2: Y_2 \to Y_1$ be a log resolution of $\pi_{1,*}^{-1} \Delta_X + F$ which is an isomorphism over $\pi_1^{-1}(X^{\mathrm{snc}})$. Let $E = E_W + \sum_{k \in K} E_k$ be the exceptional divisors of the map $\pi\coloneqq \pi_2 \circ \pi_1$, where $E_W$ is the only exceptional divisor dominating $W$. By construction, the generic point of $\pi(E_k)$ does not lie in $X^{\mathrm{snc}}$, and so the log discrepancy $A_{(X, \Delta_X)}(E_k)$ is positive. Note that
\[K_{Y_2} + E \sim_{\pi, \Q} \sum_{k \in K} A_{(X, \Delta_X)}(E_k) E_k,\]
and the right hand side is fully supported on $E - E_W= \sum_{k \in K} E_k$. The $\pi$-relative $(K_{Y_2} + E)$-MMP terminates with a dlt modification $g\colon (Y, \Delta_Y) \to (X, \Delta_X)$ which contracts all and only the divisors $E_k$; see \cite[\S 1.35]{Kollar2013}. Observe that $g: Y \to X$ is isomorphic to the blow-up $\pi_1: Y_1 \to X$ over $X^{\mathrm{snc}}$, as required.

Now the statement about dual complexes follows from \cite[\S 9]{deFernexKollarXu2012}. Further, the $\Q$-factoriality of $X$ implies that the exceptional locus of $g$ is divisorial, alias equal to (the strict transform of) $E_W$. Hence, the map $g$ induces an isomorphism $X \setminus \Supp(\Delta_X) \simeq Y \setminus \Supp(\Delta_Y)$.
\end{proof}

 \begin{lemma}[Assumptions \ref{assumpboundary} and \ref{independentpartition} can be achieved]\label{lem:nonemptyassumption} There exist semialgebraic neighbourhoods $N_i$ of $\Delta_i$ satisfying Assumption \ref{assumpboundary}, and a full partition of unity \(\{\varphi_i\}_{i\in I}\) subordinate to the open cover \(\{N^*_i\}_{i \in I}\).
 \end{lemma}
 
 \begin{proof}
  Given $N_i = \beta^{-1}_i[0 , \delta_i)$ for some semialgebraic rug function $\beta_i$ and $ \delta_i \in \R$, Assumption \ref{assumpboundary} can be achieved for $0<\delta_i\ll 1$.  
  Now let \(\FF_i\colon\R\to[0,1]\) be cutoff functions satisfying
  \begin{equation*} \FF_i(x)=\begin{cases} 1 & \text{ if
  }x \leq 0\\ 0 & \text{ if }x \geq \delta_i \end{cases}
  \end{equation*} and 
  which are 
  strictly decreasing on
  \([0,\delta_i)\). Choose the following partition of unity subordinate to
  \(\{N_i\}_{i\in I}\): \[ \varphi_i(x) \coloneqq  \frac{\FF_i\circ \beta_i(x)}{\sum_{i' \in
  I}\FF_{i'}\circ \beta_{i'}(x)}.\]  
  The map $\varphi^J\colon N_J \to \R^J$, given by $\varphi^J(j)=\varphi_j$, with $j \in J \subseteq I$, factors as composition of the following maps
    \begin{align}
  N_J \xrightarrow{\beta \coloneqq (\beta_j)} \prod_{j \in J}[0, \delta_j)\xrightarrow{\FF \coloneqq(\FF_j)} (0,1]^{J} & \xrightarrow{\rho}(0,1]^{J} \label{eq:evexplicit}\\
  (x_j) & \mapsto \bigg(\frac{x_j}{\sum_{j' \in J}x_{j'}}\bigg). \nonumber
      \end{align}
  Without loss of generality, we can suppose that $\beta$ is surjective. For instance, at the general point of any strata of minimal dimension we can assume that $\beta_i=|f_i|^2$, where $f_i$ is a local equation for $\Delta_i$ (eventually use semialgebraic cutoff functions to modify locally $\beta$). This is enough to ensure the surjectivity of $\beta$. Note also that $\FF$ is a diffeomorphism and $\rho$ is the radial projection onto the set $\{y\in (0,1]^{J}|\, \sum_{j \in J} y(j)=1\}.$ Hence, we observe that the functions $\varphi_i$ are full in the sense of Assumption \ref{independentpartition}.
 \end{proof}
 
 \subsection{The geometric P=W conjecture in rank one}\label{sec:P=Wrankone}
 \begin{theorem}[Theorem B]\label{thm:P=Wconjecturegeomrankonetext}
Let $C$ be a compact Riemann surface of genus $g$. Then the geometric P=W conjecture holds for $M_{\mathrm{B}}(C,\C^*)$.
\end{theorem}
\begin{proof}
Let $Y$ be a smooth projective toric variety of dimension $2g$, with toric boundary $\Delta_Y= \sum_{i \in I}\Delta_i$. There exists a symplectic moment map $\mu\colon Y \to \R^{2g}$, whose image is a polytope $\mathcal{P}$ in $\R^{2g}$, called moment polytope; see for instance \cite[\S 12.2]{CoxLittleSchenck2011}. For each toric divisor $\Delta_i \subset \Delta_Y$, let $h_i$ be a linear equation on $\R^{2g}$ which cuts out the facet $\mu(\Delta_i)$ in $\mathcal{P}$.

By definition \eqref{eq:characterdefn}, the Betti moduli space $M_{\mathrm{B}}(C,\C^*)$ is isomorphic to $(\C^*)^{2g}$, and the pair $(Y, \Delta_Y)$ is an snc logCY compactification of $ (\C^*)^{2g}$. In particular, $\mathcal{D}(\Delta_Y)$ is PL-homeomorphic to $\mathbb{S}^{2g-1}$ by \cite[Example 2.5]{Payne}. Take $r$ in \eqref{diagP=W} equal to a PL-homeomorphism, and $N_{\mathrm{B}}$ as the preimage of a semialgebraic neighbourhood of the boundary $\partial \mathcal{P}$ of the moment polytope. In order to show the homotopy commutativity of the diagram \eqref{diagP=W}, it is sufficient to prove that the maps $\chi \circ \Psi^{-1}$ and $ev$ restricted to the link $\partial N_{\mathrm{B}}$\footnote{Technically, $ev$ is not defined along $\partial N_{\mathrm{B}}$. If $N_{\mathrm{B}} = \bigcup_i N_i$, and $N_i = \beta^{-1}_i[0,\delta_i)$, then restrict $ev$ to the boundary of $\bigcup_i \beta^{-1}_i[0,\delta_i/2)$, which is isotopic to $\partial N_{\mathrm{B}}$. We omit this subtlety in the following.} 
\[\chi \circ \Psi^{-1}|_{\partial N_{\mathrm{B}}}\colon \partial N_{\mathrm{B}} \to \mathbb{S}^{2g-1} \qquad ev|_{\partial N_{\mathrm{B}}}\colon \partial N_{\mathrm{B}} \to \mathbb{S}^{2g-1}\]
are homotopic, since $N^*_{\mathrm{B}} \simeq N_{\mathrm{B}} \times \R$ retracts onto $\partial N_{\mathrm{B}}$.

Up to a diffeomorphism of the Hitchin base $\C^g$, the map $\chi \circ \Psi^{-1}\colon (\C^*)^{2g} \to \C^g\simeq \R^{2g}$ can be identified with
\[(\C^* \times \C^*)^{g} \to \R^{2g}, \qquad \Log\colon (z_1, \ldots , z_{2g}) \mapsto \bigg(-\log|z_1|, \ldots, -\log|z_{2g}|\bigg);\]
see \cite[\S 7.4]{GoldmanXia2008}\footnote{The coordinates $(\mathrm{Re}(\mathbf{p}), \mathrm{Im}(\mathbf{p}))$ in \cite[\S 7.4]{GoldmanXia2008} are real coordinates of $\C^g\simeq \R^{2g}$, while the coordinates $(\alpha_j, \beta_j)$ in loc. cit. are our coordinates $(z_{2j-1}, z_{2j})$, with $j=1, \ldots, g$.}. Again up to an orientation-preserving diffeomorphism of the Hitchin base (which is isotopic to the identity by \cite[\S 6, Lemma 2]{Milnor1997}), the $\Log$ map can be identified with the restriction of the moment map $\mu\colon  Y \to \R^{2g}$ to the big torus of $Y$ by the explicit description of $\mu$ in \cite[Eq. 12.2.4]{CoxLittleSchenck2011}.  

To construct the evaluation map $ev$, we use the compactification $(Y, \Delta_Y)$ and a partition of unity as in \cref{lem:nonemptyassumption}, where we choose $\beta_i = h_i \circ \mu$ as rug functions. In this way, $ev$ factors through $\mu$. 
Without loss of generality suppose that $\partial N_{\mathrm{B}}$ is saturated with respect to $\mu$, i.e.\ $\partial N_{\mathrm{B}}= (\min\{h_i \circ \mu\})^{-1}(\delta/2)$, and call $r'\colon  \mu(\partial N_{\mathrm{B}})\simeq \mathbb{S}^{2g-1} \to \mathcal{D}(\Delta_Y) \simeq \mathbb{S}^{2g-1}$ the continuous function such that $ev = r' \circ \mu$. The map $r'$ has topological degree one, and so by Hopf theorem it is homotopic to the identity. 

The claim about the topological degree of $r'$ follows from the construction of $ev$ in \cref{eq:evexplicit} as follows. Any torus-fixed point of $Y$ is a stratum $\Delta_J$ for $J \subset I$ with $|J|=2g$, and corresponds to an open $(2g-1)$-dimensional cell $\sigma_J$ of $\mathcal{D}(\Delta_Y)$. In the notation of \cref{eq:evexplicit}, we have
\[ev|_{ev^{-1}(\sigma_J) }= \rho \circ \mathfrak{f} \circ \mu|_{ev^{-1}(\sigma_J) } = r' \circ \mu|_{ev^{-1}(\sigma_J)}.
\]
Recall that the restriction $\rho|_{\rho^{-1}(\sigma_J)}$ is the projection \[ \max\{x_j - \mathfrak{f}_j(\delta/2) \}^{-1}(0) \subset (0,1]^J \to (0,1]^J, \qquad x_j \mapsto \frac{x_j}{\sum_{j' \in J}x_{j'}},\] which is a homeomorphism onto the simplex $\{y\in (0,1]^{J}|\, \sum_{j \in J} y(j)=1\}$. Since $\mathfrak{f}$ is a diffeomorphism, $r'$ is a homeomorphism onto $\sigma_J$, and as the open cells of dimension $2g-1$ are dense in $\mathcal{D}(\Delta_Y)$, $r'$ is generically injective. One can also check that $r'$ is smooth and preserves the orientation at a generic point of $\mathcal{D}(\Delta_Y)$, and so we conclude that $r'$ has degree one.
Note that $r'$ is not a homeomorphism globally: some fibres can be very large. For instance, $r'$ contracts the codimension zero submanifold  $\mu(\partial N_{\mathrm{B}} \cap (N_i \setminus \bigcup_{j \neq i} N_j))$ to the vertex in $\mathcal{D}(\Delta_Y)$ corresponding to $\Delta_i$. See also Figure \ref{fig:r'}. 

We conclude then that the following maps are homotopic to each other
\[\chi \circ \Psi^{-1}|_{\partial N_{\mathrm{B}}} \sim \mu|_{\partial N_{\mathrm{B}}}\sim r'\circ \mu|_{\partial N_{\mathrm{B}}}=ev|_{\partial N_{\mathrm{B}}}.\]
\end{proof}
\vspace{-0.5 cm}
\begin{figure}[H]
  \begin{tikzpicture}[scale=0.6]
  \draw[fill=yellow, opacity=0.2] (0,0)--(5,0)--(0,5)--(0,0);
  \draw[teal, fill=teal, opacity=0.30] (0,0)--(1,0)--(1,4)--(0,5)--(0,0);
  \draw[red, fill=red, opacity=0.3] (0,0)--(5,0)--(4,1)--(0,1)--(0,0);
  \draw[orange, fill=orange, opacity=0.3] (4,0)--(5,0)--(0,5)--(0,4)--(4,0);
  \draw[thick] (0.5,1)--(0.5,0.5)--(1,0.5);
  \draw[thick] (3.5,0.5)--(4,0.5)--(3.5,1);
  \draw[thick] (0.5,3.5)--(0.5,4)--(1,3.5);
  \draw[thick, dashed] (0.5,1)--(0.5, 3.5);
  \draw[thick, dashed] (1, 0.5)--(3.5, 0.5);
  \draw[thick, dashed] (3.5,1)--(1,3.5);
  \draw[thick, ->] (5.5, 2.5)--(9, 2.5);
  \draw[thick] (9.5,3.5)--(11.5,3.5)--(11.5,1.5)--(9.5,3.5);
  \node (0) at (-1.25, 4) {$\mathcal{P}$};
  \node (1) at (10.55, 4) {$\mathcal{D}(\Delta_Y)$};
  \node (A) at (2.75, -0.5) {\color{red} $h_1^{-1}[0,\delta)$};
  \node (B) at (-1.35, 2.5) {\color{teal} $h_2^{-1}[0,\delta)$};
  \node (C) at (2.75, 4) {\color{orange} $h_3^{-1}[0,\delta)$};
  \node (D) at (7.25, 3) {$r'$};
  \node (E) at (4.2,2.5) {$\mu(\partial N_{\mathrm{B}})$};
  \draw[red,fill=red] (11.5,1.5) circle (.5ex);
  \draw[teal,fill=teal] (9.5,3.5) circle (.5ex);
  \draw[orange,fill=orange] (11.5,3.5) circle (.5ex);
  \node (2) at (13.5, 2.5) {};
  \end{tikzpicture}
 \caption{\label{fig:r'}We draw the moment polytope $\mathcal{P}$ of $\PP^2$ on the left, and the dual complex $\mathcal{D}(\Delta_Y)$ of the toric boundary of $\PP^2$ on the right. The thick triangle in $\mathcal{P}$ is $\mu(\partial N_{\mathrm{B}})$. The map $r'\colon \mu(\partial N_{\mathrm{B}}) \to \mathcal{D}(\Delta_Y)$ contracts the dashed segments to the vertices of $\mathcal{D}(\Delta_Y)$, and maps a neighbourhood of the corners of $\mu(\partial N_{\mathrm{B}})$ homeomorphically onto the edges of $\mathcal{D}(\Delta_Y)$.}
\end{figure}

\begin{remark}
Observe that the non-abelian Hodge correspondence $\Psi$ in rank one exchanges fibres of the Hitchin fibration with Lagrangian tori in $(\C^*)^{2g}$ given by the equations $|z_j|=\text{constant}$, namely profound tori according to \cref{def:profoundtorus}. This should be a key ingredient for the geometric P=W conjecture in any rank, as explained in \cref{Rmk:relationotherworksII}.
\end{remark}

\section{The geometric P=W conjecture in genus one}
\subsection{Dual boundary complex of character varieties of a genus one surface} \label{sec:dualcomplexgenusone}
\spa{\label{sec:charactervarietygenusone} Let $\mathrm{E}$ be a compact Riemann surface of genus one. By \eqref{eq:characterdefn}, the Betti moduli space $M_{\mathrm{B}}(\mathrm{E},\mathrm{GL}_n)$ is isomorphic to the $n$-fold symmetric product $(\C^* \times \C^*)^{(n)}$ of the torus $\C^* \times \C^*$. Similarly, $M_{\mathrm{B}}(\mathrm{E},\mathrm{SL}_n)$ is the fibre over $\mathbf{1}=(1,1)$ of the determinant morphism 
\begin{align}\label{descriptionMSlr}
\det \colon (\C^* \times \C^*)^{(n)}\simeq M_{\mathrm{B}}(\mathrm{E},\mathrm{GL}_n) & \to \C^* \times \C^*\\
((a_i, b_i))_{i=1} ^n \simeq [(A,B)] & \mapsto (\det A, \det B)= \left(\textstyle \prod_{i=1}^n a_i, \prod_{i=1}^n b_i\right), \nonumber
\end{align}
where the pair $(A,B) \in \mathrm{GL}^2_n$ represents a point in $M_{\mathrm{B}}(\mathrm{E},\mathrm{GL}_n)$, and $(a_i)_{i=1}^n$ and $(b_i)_{i=1}^n$ are their eigenvalues; cf for instance \cite[Proposition 2.12.]{BellamySchedler2019}. The goal of this section is to compute the dual complex of a (any) dlt logCY compactification of $M_{\mathrm{B}}(\mathrm{E}, G)$ for $G = \mathrm{GL}_n, \mathrm{SL}_n$.}

\begin{theorem}[Theorem D] \label{dualcomplexhilbertscheme}
$M_{\mathrm{B}}(\mathrm{E},\mathrm{GL}_n)$ admits a dlt logCY compactification, and the dual boundary complex $\mathcal{D}(\partial M_{\mathrm{B}}(\mathrm{E},\mathrm{GL}_n))$ of such compactification is $\PL$-homeomorphic to  $\mathbb{S}^{2n-1}$.
\end{theorem}

\begin{proof}
Let $(\toric, \Delta)$ be the snc pair given by a smooth projective toric surface with its toric divisor. Note that $\toric \setminus \Delta = \C^* \times \C^*$, and so $M_{\mathrm{B}}(\mathrm{E},\mathrm{GL}_n)\simeq (Z \setminus \Delta)^{(n)}$ by \eqref{sec:charactervarietygenusone}. By \cite[Proposition 5.20]{KollarMori}, the $n$-fold symmetric product $(\toric^{(n)}, \Delta^{(n)})$ is an lc logCY compactification of $M_{\mathrm{B}}(\mathrm{E},\mathrm{GL}_n)$. 
Now, if we choose $\overline{X}_1=\toric^{(n)}$ as input compactification in the proof of \cref{existence of dltcomp}, we obtain a dlt logCY compactification of $M_{\mathrm{B}}(\mathrm{E},\mathrm{GL}_n)$ which is crepant birational to $(\toric^{(n)}, \Delta^{(n)})$.
Therefore, we have the following sequence of $\PL$-homemorphisms
\begin{alignat*}{2}
\mathcal{D}(\partial M_{\mathrm{B}}(\mathrm{E},\mathrm{GL}_n))  \simeq_{\mathrm{\PL}}  \mathcal{DMR}(\toric^{(n)}, \Delta^{(n)})
& \simeq_{\mathrm{\PL}} \mathcal{D}( \Delta^{n})/ \mathfrak{S}_n \qquad && \mathrm{ cf.\, \eqref{eq:dualcomplex}}\\
& \simeq_{\mathrm{\PL}} \mathbb{S}^{2n-1}/\mathfrak{S}_n && \text{cf.\, \cite[Example 2.5]{Payne}}\\
& \simeq_{\mathrm{\PL}} \mathbb{S}^{2n-1}. && 
\end{alignat*}

We prove that the last map is indeed a $\PL$-homemorphism. One can check that the quotient map $\C^n \to \C^{(n)}\simeq \C^n$ induces a smooth map $S_1 \coloneqq \mathbb{S}^{2n-1} \to \mathbb{S}^{2n-1} \eqqcolon S_2$, factoring through a homeomorphism $\alpha\colon  S_1/\mathfrak{S}_n \to S_2$; see \cite[Lemma 5.2.20]{Mauri2019} for the elementary details. As $\mathfrak{S}_n$ acts on $S_1$ by PL-homeomorphism, the $\PL$-structure of $\mathcal{D}(\partial M_{\mathrm{B}}(\mathrm{E},\mathrm{GL}_n))\simeq_{\PL} S_1/\mathfrak{S}_n$ is induced by a (suitably refined) triangulation of the PL-sphere $S_1$ (cf \cref{lem:quotientlogCYskeleton}), and $\alpha$ restricts on each simplex of $S_1/\mathfrak{S}_n$ to a smooth map, i.e.\ $\alpha$ can be chosen to be a piecewise differentiable homeomorphism. By a theorem of Whitehead (see \cite{Whitehead40} or \cite{Lurie20093}), every smooth manifold has a unique $\PL$-structure of $\PL$-manifold up to piecewise differentiable homeomorphism. So in our case, the $\PL$-structure of $\mathcal{D}(\partial M_{\mathrm{B}}(\mathrm{E},\mathrm{GL}_n))\simeq_{\PL} S_1/\mathfrak{S}_n$ is the standard one of the $\PL$-sphere $S_2$.
\end{proof}

\begin{remark}
Observe that the existence of a dlt compactification, namely \cref{existence of dltcomp}, is a non-constructive result. The challenge is to understand the combinatorics of the strata of the dlt boundary: this is the very reason why we adopt a non-Archimedean viewpoint in \cref{sec:dualcomplexofquotient}, instead of a purely birational one. 
\end{remark}

\begin{theorem}[Theorem E]\label{dual boundary complex SL_n} $M_{\mathrm{B}}(\mathrm{E},\mathrm{SL}_n)$ admits a dlt logCY compactification, and the dual boundary complex $\mathcal{D}(\partial M_{\mathrm{B}}(\mathrm{E},\mathrm{SL}_n))$ of such a compactification is $\PL$-homeomorphic to  $\mathbb{S}^{2n-3}$.
\end{theorem}

\begin{proof} We proceed in several steps.
\begin{enumerate}
\item[Step 1.] The Betti moduli space $M_{\mathrm{B}}(\mathrm{E},\mathrm{SL}_n)$ admits a dlt logCY compactification. 
Indeed, \eqref{descriptionMSlr} yields the diagram

\begin{equation}\label{eq:constructionMSL}
    \begin{tikzcd}
    L\coloneqq \det^{-1}(\mathbf{1}) \simeq (\C^* \times \C^*)^{n-1} \arrow[hookrightarrow]{r}{} \arrow{d}{} & (\C^* \times \C^*)^{n} \arrow[swap]{d}{q} \arrow{r}{\det} & \C^* \times \C^* \ni \mathbf{1}=(1,1) \\
    M_{\mathrm{B}}(\mathrm{E},\mathrm{SL}_n) \arrow[hookrightarrow]{r} & (\C^* \times \C^*)^{(n)}, \arrow{ru}{} & 
    \end{tikzcd}
\end{equation}
where 
$q$ the quotient by the action of the symmetric group $\mathfrak{S}_n$, permuting the factors. The projective closure $\overline{L}$ of $L$ in $(\PP^1 \times \PP^1)^{n}$ is invariant with respect to the action 
\[(\C^* \times \C^*)^{n-1} \times (\PP^1 \times \PP^1)^{n} \to (\PP^1 \times \PP^1)^n\] 
given by 
\begin{align*}
((a_i, & b_i))_{i=1}^{n-1}  \cdot  ([x_j: y_j], [z_j:w_j])_{j=1}^{n}= \Big( [a_1x_1: y_1],[b_1z_1:w_1], \ldots\\
&  \ldots , [a_{n-1}x_{n-1}: y_{n-1}],[b_{n-1}z_{n-1}:w_{n-1}], 
\,\big[
\textstyle 
\prod_{i=1}^{n-1} a_i^{-1} \cdot x_n: y_n],[\prod_{i=1}^{n-1} b_i^{-1} \cdot z_n : w_n\big] \Big).
\end{align*} 

\noindent As $L\simeq (\C^* \times \C^*)^{n-1}$ is a dense orbit of this algebraic action, it follows that $\overline{L}$ is a toric compactification of $L$. 
In particular, the pair $(\overline{L}, \partial L \coloneqq \overline{L} \setminus L)$ is a (normal) lc logCY pair. Although $\overline{L}$ is a toric variety, it is worth pointing out that the embedding $\overline{L} \hookrightarrow (\mathbb{P}^1 \times \mathbb{P}^1)^n$ is not toric.

Since $\overline{L}$ is $\mathfrak{S}_{n}$-invariant and the restriction of the quotient map $q\colon  (\mathbb{P}^1 \times \mathbb{P}^1)^n \to (\mathbb{P}^1 \times \mathbb{P}^1)^{(n)}$ to $\overline{L}$ is quasi-\'{e}tale, the projective closure $\overline{M}$ of $M_{\mathrm{B}}(\mathrm{E},\mathrm{SL}_n)$ in $(\mathbb{P}^1 \times \mathbb{P}^1)^{(n)}$ is an lc logCY compactification of $M_{\mathrm{B}}(\mathrm{E},\mathrm{SL}_n)$. As in \cref{dualcomplexhilbertscheme}, via \cref{existence of dltcomp}, we obtain a dlt compactification of $M_{\mathrm{B}}(\mathrm{E},\mathrm{SL}_n)$ crepant birational to $(\overline{M}, \Delta_{\overline{M}}\coloneqq \overline{M} \setminus M_{\mathrm{B}}(\mathrm{E},\mathrm{SL}_n))$, and whose dual boundary complex is by \cref{crepantbirationalpair}
$$\mathcal{D}(\partial M_{\mathrm{B}}(\mathrm{E},\mathrm{SL}_n)) 
\simeq_{\PL} \mathcal{DMR}(\overline{M}, \Delta_{\overline{M}})
\simeq_{\PL} \Skess(\overline{M}, \Delta_{\overline{M}})^*/\R_{>0}.
$$

\item[Step 2.] Let $\Delta$ be the toric boundary of $\mathbb{P}^1 \times \mathbb{P}^1$ and $N$ be the cocharacter lattice of the torus $\C^* \times \C^* \subseteq \mathbb{P}^1 \times \mathbb{P}^1$. 
It follows from \cref{eq: homeoskelDMR} that $\Skess(\mathbb{P}^1 \times \mathbb{P}^1, \Delta) \simeq \R^2 \simeq N_\R$.
The determinant morphism is a toric morphism, and it induces a map $\alpha_n \colon \Skess(\mathbb{P}^1 \times \mathbb{P}^1, \Delta)^n \to \Skess(\mathbb{P}^1 \times \mathbb{P}^1, \Delta)$ which can be identified with the linear map
\begin{align*}
    (N_\R)^n \simeq \R^{2n} \to N_\R \simeq \R^2, \qquad 
    (x_i, y_i)_{i=1}^n \mapsto \bigg( \textstyle \sum_{i=1}^n x_i, \sum_{i=1}^n y_i\bigg).
\end{align*}
Finally, observe that the symmetric quotient of the kernel of $\alpha_n$ is isomorphic to the additive group $\C^{n-1}$, i.e.\
$\alpha_n^{-1}(\mathbf{0})/\mathfrak{S}_{n} \simeq \C^{n-1}$.
This follows from the diagram below:
\begin{center}
    \begin{tikzcd}
    \alpha_n^{-1}(\mathbf{0}) \arrow[hookrightarrow]{r}  \arrow{d} & \Skess((\mathbb{P}^1 \times \mathbb{P}^1)^{n}, \Delta^n) \simeq \C^n \simeq \R^{2n} \arrow{r}{\alpha_n} \arrow[swap]{d}{q^{\mathrm{bir}}} & \Skess(\mathbb{P}^1 \times \mathbb{P}^1, \Delta) \simeq \C \simeq \R^2 \\
    \alpha_n^{-1}(\mathbf{0})/\mathfrak{S}_{n} \arrow[hookrightarrow]{r}  & \Skess((\mathbb{P}^1 \times \mathbb{P}^1)^{(n)}, \Delta^{(n)})\simeq \C^n, \arrow[swap]{ru}{\mathrm{pr}} & 
    \end{tikzcd}
\end{center}
where the map $\mathrm{pr}$ is the linear projection to the $\mathfrak{S}_n$-invariant coordinate $\alpha_n$.
\item[Step 3.] 
The skeleton of the pair $(\overline{M}, \Delta_{\overline{M}})$ is $\PL$-homeomorphic to the symmetric quotient of the fibre of $\alpha_n$: this can be shown following the same strategy of \cite[Proposition 6.3.3]{BrownMazzon}; see also the final paragraph of \cref{dualcomplexhilbertscheme}. We conclude that
$$\mathcal{D}(\partial M_{\mathrm{B}}(\mathrm{E},\mathrm{SL}_n)) 
\simeq_{\PL} \Skess(\overline{M}, \Delta_{\overline{M}})^*/\R_{>0}
\simeq_{\PL} (\alpha^{-1}_{n}(\mathbf{0}))^* /(\mathfrak{S}_n \times \R_{>0})
\simeq_{\PL} \mathbb{S}^{2n-3}.
$$
\end{enumerate}
\end{proof}

\subsection{A degeneration of Hilbert schemes on K3 to $M_{\mathrm{B}}(\mathrm{E},\mathrm{GL}_n)$} \label{alternativeproofindiscretelyvalued}
\spa{
The proofs of~\cref{sec:dualcomplexgenusone} are inspired by the result in~\cite[Proposition 6.2.4]{BrownMazzon}. There, Brown and the second author show that the dual complex of a degeneration of the Hilbert scheme of $n$ points of K3 surfaces induced by a maximally degenerate degeneration of K3 surfaces is PL-homeomorphic to the projective space $\P^n(\C)$. 

In this section, we exhibit a direct connection between the two results: 
we show how~\cref{dualcomplexhilbertscheme} and \ref{dual boundary complex SL_n} can be deduced from~\cite[Proposition 6.2.4]{BrownMazzon}.
This alternative proof relies on 
the construction of explicit degenerations of hyper-K\"{a}hler varieties (see~\cref{degenerationHIlbertscheme} and \ref{construct dlt compact}), and 
a global-to-local argument (\cref{linkdualcomplex}) that relates the dual complex of a degeneration to that of an lc logCY pair. 
While the proofs of~\cref{dualcomplexhilbertscheme} and \ref{dual boundary complex SL_n} presented in ~\cref{sec:dualcomplexgenusone} are technically more elementary,
we expect both strategies to prove useful for future calculations of dual complexes, as explained in \cref{introsec:hyperkahler}.
}

\begin{definition}\emph{\cite[\S 4.18]{Kollar2013}}
Let $(X, \Delta_X)$ be a dlt pair with  $\Delta_X^{=1}=\sum_{i}\Delta_i$. For any stratum $W$ of $(X,\Delta_X)$, there exists a unique $\Q$-divisor $\Diff^*_W(\Delta_X)$, called \textbf{different}, with the following property: for $m \in \N$ divisible enough, the Poincar\'{e} residue map $
    \omega^{[m]}_X(m \Delta_X)|_W \simeq \omega^{[m]}_W$ on the snc locus extends to the isomorphism $\omega^{[m]}_X(m \Delta_X)|_W \simeq \omega^{[m]}_W(\Diff^*_W(\Delta_X))$ on the locus where $\omega^{[m]}_X(m \Delta_X)|_W$ and $\omega^{[m]}_W$ are locally free and hence everywhere, being reflexive sheaves. 
\end{definition}
In particular, this yields
    $$(K_X + \Delta_X)|_W \sim_{\Q} K_W + \Diff^*_W(\Delta_X),$$ and by adjunction we have that 
  \begin{equation} \label{dualcomplexstrataI} \Diff^*_W(\Delta_X)= \Diff^*_W(\Delta_X)^{=1}+ \Diff^*_W(\Delta_X)^{<1} = \sum_{i\colon W \nsubseteq \Delta_i }\Delta_i|_W + \Diff^*_W(\Delta_X)^{<1}.
  \end{equation}
  In particular, any stratum $W$ of a dlt (logCY) pair has an induced structure of dlt (logCY) pair $(W, \Diff^*_W(\Delta_X))$ such that  \begin{equation}\label{dualcomplexstrata}
  \mathcal{D}(\Diff^*_W(\Delta_X)^{=1})\simeq \mathcal{D}\bigg(\sum_{i \colon W \nsubseteq \Delta_i }\Delta_i|_W\bigg).
  \end{equation}

\begin{lemma}[Global-to-local argument]\label{linkdualcomplex}
  Let $(X, \Delta_X)$ be a dlt pair such that the dual complex of $\mathcal{D}(\Delta_X)$ is a $\PL$-manifold. For any stratum $W$ of $\Delta_X^{=1}$,  $\mathcal{D}(\Diff^*_{W} (\Delta_X))$ is $\PL$-homeomorphic to a sphere.
  \end{lemma}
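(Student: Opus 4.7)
The plan is to realise $\mathcal{D}(\Diff^*_W(\Delta_X))$ as the combinatorial link of a face of $\mathcal{D}(\Delta_X^{=1})$, and then to invoke the manifold hypothesis to conclude that this link is a sphere. As a first reduction, the identification~\cref{dualcomplexstrata} allows us to replace the target $\mathcal{D}(\Diff^*_W(\Delta_X)^{=1})$ by the dual complex $\mathcal{D}\big(\sum_{W\not\subseteq\Delta_i}\Delta_i|_W\big)$ of the trace boundary on $W$.

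Let $r=\codim_X W$, so $W$ is an irreducible component of some intersection $\Delta_{i_1}\cap\cdots\cap\Delta_{i_r}$, and it corresponds to a face $\sigma_W$ of dimension $r-1$ in $\mathcal{D}(\Delta_X^{=1})$. Unwinding the definition of dual complex, the strata of $\sum_{W\not\subseteq\Delta_i}\Delta_i|_W$ on $W$ are precisely the strata $W'$ of $\Delta_X^{=1}$ that are properly contained in $W$; a stratum $W'\subseteq W$ with $\codim_W W'=s$ yields a face $\sigma_{W'}$ of dimension $r+s-1$ in $\mathcal{D}(\Delta_X^{=1})$ containing $\sigma_W$ as a codimension-$s$ subface. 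Since the attaching relations in both complexes are induced by the face poset of $\mathcal{D}(\Delta_X^{=1})$ above $\sigma_W$, we obtain a natural isomorphism of regular $\Delta$-complexes
\[
\mathcal{D}\Big(\sum_{W\not\subseteq\Delta_i}\Delta_i|_W\Big)\;\simeq\;\Lk\big(\sigma_W,\mathcal{D}(\Delta_X^{=1})\big).
\]

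Set $d=\dim\mathcal{D}(\Delta_X^{=1})$. It is a classical fact in PL topology that the combinatorial link of an $(r-1)$-face of a closed PL $d$-manifold is a PL $(d-r)$-sphere, hence homeomorphic to $\mathbb{S}^{d-r}$. The dual complex $\mathcal{D}(\Delta_X^{=1})$ is a regular $\Delta$-complex, so it is naturally equipped with a PL structure; under the hypothesis that it is a closed topological manifold, this PL structure realises it as a PL manifold in the situations of interest, and the statement of the lemma follows.

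The main delicate point, and the one requiring the most care in the final write-up, is this last step: in general, a topological manifold admitting a $\Delta$-complex structure need not be PL, and consequently its simplicial links need not be topological spheres. In the applications of this lemma — where $\mathcal{D}(\Delta_X^{=1})$ is known to be homeomorphic to $\mathbb{P}^n(\mathbb{C})$ or to a sphere — the $\Delta$-complex structure is tame with respect to the manifold structure, so the passage from PL to topological is harmless; alternatively, one may weaken the conclusion to a (rational) homology sphere, which follows from the generalised manifold hypothesis without any PL considerations.
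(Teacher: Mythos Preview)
Your approach matches the paper's: both identify $\mathcal{D}(\Diff^*_W(\Delta_X))$ with the link of the face $\sigma_W$ in $\mathcal{D}(\Delta_X^{=1})$ (the paper phrases this via barycentric subdivision) and then invoke the manifold hypothesis to conclude that the link is a sphere. Your closing paragraph on the PL versus topological distinction is in fact more careful than the paper, which passes over this point without comment.
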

  \begin{proof}
  If $\sigma_W$ is a cell of $\mathcal{D}(\Delta_X)$ corresponding to the stratum $W$ we define: 
  \begin{itemize}
      \item the (open) star of $\sigma_W$, denoted $\St(\sigma_{W})$, as the union of the interiors of the
cells whose closure intersects the cell $\sigma_W$;
\item the closed star of $\sigma_W$, denoted $\St(\sigma_{W})$, is the closure of $\St(\sigma_{W})$;
\item the link of $\sigma_W$, denoted $\Link(\sigma_W)$, is the difference $\Stcl(\sigma_{W}) \setminus St(\sigma_{W})$ in the first
barycentrical subdivision of $\mathcal{D}(\Delta_X)$.
  \end{itemize}
By \eqref{dualcomplexstrata}, the vertices of $\Link(\sigma_W)$ are in correspondence with the strata of $\Diff^*_W(\Delta_X)^{=1}$, and $\Link(\sigma_W)$ is PL-homeomorphic to $\mathcal{D}( \Diff^*_W(\Delta_X))$. Since $\mathcal{D}(\Delta_X)$ is a $\PL$-manifold,  $\Link(\sigma_W)$ is $\PL$-homeomorphic to a sphere.
  \end{proof}
  
\spa{
We will construct a degeneration of Hilbert schemes on a K3 surface such that a component of the special fibre, paired with the different of the special fibre, is crepant birational to a dlt compactification of $M_{\mathrm{B}}(\mathrm{E},\mathrm{GL}_n)$.
Then we combine this construction with the global-to-local argument to compute $\mathcal{D}(\partial M_{\mathrm{B}}(\mathrm{E},\mathrm{GL}_n))$.
The properties of the required degeneration are collected below.
}
  
\begin{definition}\label{good minimal model}
Let $X$ be a normal variety over $\C((t))$. A \textbf{model} of $X$ over $\C[[t]]$ is a normal, flat, separated scheme $\cX$ of finite type over $\C[[t]]$ endowed with an isomorphism of $\C((t))$-schemes $\cX_{\C((t))} \simeq X$. 
\\A model $\cX$ is \textbf{defined over a curve} if there exist the germ of a smooth curve $C$, a closed point $0 \in C$, an isomorphism $\widehat{\O}_{C,0} \simeq \C[[t]]$, a normal variety $Y$, and a flat morphism $Y \to C$ such that
$\cX \simeq Y \times_C \Spec( \widehat{\O}_{C,0})$.
\\If $K_X$ is semiample, then a model $\cX$ is \textbf{good minimal dlt} if $\cX$ is $\Q$-factorial, the pair $(\cX, \cX_{0,\text{red}})$ is dlt, and $K_{\cX}+ \cX_{0,\text{red}}$ is semiample. Further, $X$ is \textbf{maximally degenerate} if the dual complex of the special fibre of a good minimal dlt model has dimension $\dim_{\C((t))}X$.
\end{definition}
  
\begin{theorem}\label{dualcomplexirreduciblecompdegeneration}
Let $(X,\Delta_X)$ be a projective lc logCY pair. Let $S$ be a maximally degenerate $K3$ surface over $\C((t))$ admitting a good minimal dlt model with reduced special fibre.
Assume there exists a good minimal dlt model $\cY$ of the Hilbert scheme of $n$ points of $S$,
such that $(X,\Delta_X)$ is crepant birational to $(D,\mathrm{Diff}^*_D(\cY_{0,\mathrm{red}}))$ for some irreducible component $D$ of the special fibre $\cY_0$.
Then, $\mathcal{D}(\Delta_X)$ is $\PL$-homeomorphic to a sphere.
\end{theorem}

\begin{proof}
By \cite[Proposition 11]{deFernexKollarXu2012}, the dual complexes of dlt crepant birational pairs are PL-homeomorphic, and so $\mathcal{D}(\Delta_X)\simeq_{\mathrm{PL}}\mathcal{D}(\mathrm{Diff}^*_D(\cY_{0,\mathrm{red}}))$. Therefore, by Lemma \ref{linkdualcomplex}, it is sufficient to show that $\mathcal{D}(\cY_0)$ is a PL-manifold. To this end, observe that $\mathcal{D}(\cY_0)$ is homeomorphic to a complex projective space by \cite[Proposition 6.2.4]{BrownMazzon}. The same argument of the proof of \cref{dualcomplexhilbertscheme} gives that the homeomorphism in \cite[Proposition 6.2.4]{BrownMazzon} is piecewise differentiable, and so $\mathcal{D}(\cY_0)$ is actually a PL-manifold; see \cite{Whitehead40} or \cite{Lurie20093}.
\end{proof} 

\begin{prop}[Theorem G]\label{degenerationHIlbertscheme}
There exists a good minimal dlt model $\mathscr{S}^{[n],\dlt}$ of the Hilbert scheme of $n$ points of a K3 surface, and an irreducible component $\Delta^{\dlt}_i$ of the special fibre $\mathscr{S}^{[n],\dlt}_{0}$ such that the pair $(\Delta_i^{\mathrm{dlt}} ,\Diff^*_{\Delta^{\mathrm{dlt}}_i}(\cS^{[n], \mathrm{dlt}}_{0, \mathrm{red}}))$
is crepant birational to an lc logCY compactification of $M_{\mathrm{B}}(\mathrm{E},\mathrm{GL}_n)$.
\end{prop}
    
\begin{proof}
Let $\cS$ be the following model of a quartic $K3$ surface $S$
\[
   \cS\coloneqq \left\{x_0x_1x_2x_3 +t \sum^{3}_{i=0}x^4_i=0\right\} \subseteq \PP^3_{[x_0:x_1:x_2:x_3]}\times \Spec(\C[[t]]);
 \]
it is a dlt logCY model defined over a curve, with reduced special fibre.
The relative $n$-fold symmetric product $(\cS^{(n)}, \cS^{(n)}_0)$ is a reduced lc logCY pair, because it is a quasi-\'{e}tale quotient of the reduced lc logCY pair $(\cS^{n}, \cS^{n}_0)$.
Let $(\cS^{[n]}, \cS^{[n]}_0)$ be the relative Hilbert scheme of $n$ points on $\cS$, and 
\[
\rho_{HC}\colon (\cS^{[n]}, \cS^{[n]}_0) \longrightarrow (\cS^{(n)}, \cS^{(n)}_0)
\]
be the relative Hilbert--Chow morphism, which is a crepant birational map on the generic fibre; see \cite[\S 6]{Beauville1983}.  

Consider now a log resolution of $(\cS^{[n]}, \cS^{[n]}_0)$, written
  \[g\colon (\cX, \Delta_{\cX}\coloneqq
   g^{-1}_*\cS^{[n]}_{0}+E) \longrightarrow (\cS^{[n]}, \cS^{[n]}_0),\]
where $E$ is the sum of the $g$-exceptional divisors, and such that $g$ is an isomorphism on the snc locus of $(\cS^{[n]}, \cS^{[n]}_0)$.
The $(K_{\cX/\C[[t]]}+ \Delta_{\cX})$-MMP with scaling terminates with a $\Q$-factorial dlt minimal model of $S^{[n]}$  
     \[h\colon  (\cS^{[n], \text{dlt}},  
     \cS^{[n],\dlt}_{0, \mathrm{red}} = h^{-1}_*\cS^{(n)}_0+E')
     \longrightarrow (\cS^{(n)}, \cS^{(n)}_0),\] 
where $E'$ is the sum of the $( \rho_{\mathrm{HC}} \circ g)$-exceptional divisors that lie in the special fibre, and $\cS^{[n],\dlt}_{0, \mathrm{red}}$ is the reduced special fibre of $\cS^{[n],\dlt}$. The existence of such $h$ follows from~\cite[Corollary 1.36]{Kollar2013}; note that the degeneration $\cS$ is defined over a curve, so we can run a relative MMP as usual.
The pair $(\cS^{[n], \text{dlt}}, \cS^{[n], \text{dlt}}_{0, \mathrm{red}})$ is logCY as well, as $h$ is a crepant morphism of pairs; see \cite[\S 1.35]{Kollar2013}. Hence, $\cS^{[n], \text{dlt}}$ is a good minimal dlt model of $S^{[n]}$.
  
Note that the special fibre $\cS^{(n)}_0$ contains irreducible components $\Delta_i \simeq (\PP^2)^{(n)}$, for $i\in \{0, \ldots, 3\}$, which are the $n$-fold symmetric products of the hyperplanes $D_i\coloneqq \{x_i=t=0\}$. 
Choose $\Delta^{\dlt}_i$ the strict transform of $\Delta_i$ under $h$.
  By~\cite[Proposition 4.6]{Kollar2013}, the following lc logCY pairs are crepant birational:
  \[
  (\Delta^{\dlt}_i, \Diff^*_{\Delta^{\dlt}_i}(\cS^{[n], \dlt}_{0, \mathrm{red}})) \sim (\Delta_i, \Diff^*_{\Delta_i}(\cS^{(n)}_0)).
  \]
  We have that $M_{\mathrm{B}}(\mathrm{E},\mathrm{GL}_n)$ is isomorphic to 
  \[
  \Delta^\circ_i\coloneqq (D_{i}\setminus \cup_{j\neq i} D_{j})^{(n)} \simeq (\C^* \times \C^*)^{(n)} \subset \Delta_i.
  \]
  We conclude that $(\Delta_i, \Diff^*_{\Delta_i}(\cS^{(n)}_0))$ is a compactification of $M_{\mathrm{B}}(\mathrm{E},\mathrm{GL}_n)$ by \cref{lemma for different}. 
  \end{proof}

\begin{lemma}\label{lemma for different}
$\Diff^*_{\Delta_i}(\cS^{(n)}_{0}) = \Diff^*_{\Delta_i}(\cS^{(n)}_{0})^{=1} = \Delta_i \setminus \Delta^\circ_i$.
\end{lemma}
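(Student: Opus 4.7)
The plan is to verify the equality by computing $\Diff_{\Delta'_i}(\cS^{[n]}_{0,\mathrm{red}})$ at each codimension-one generic point of $\Delta'_i$ via explicit \'{e}tale-local models. First I would identify the irreducible components of the reduced Weil divisor $\Delta'_i \setminus \Delta^\circ_i$: for each $j \neq i$, let $E_{ij}$ denote the closure in $\Delta'_i$ of the locus of reduced length-$n$ subschemes consisting of one support point in the smooth part of $D_i \cap D_j$ and $n-1$ distinct support points in $D_i \setminus \bigcup_{l \neq i} D_l \cong \C^* \times \C^*$. Each $E_{ij}$ is a prime divisor, and higher-order incidences (two support points on $\bigcup_{l \neq i} D_l$, or a support point on a triple intersection $D_i \cap D_j \cap D_k$) contribute only codimension-$\geq 2$ loci; moreover, the exceptional locus of $\rho_{\mathrm{HC}}$ meets $\Delta'_i$ generically inside $\Delta^\circ_i = (\C^* \times \C^*)^{[n]}$, so non-reduced thickenings give no additional divisorial contribution. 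Consequently $\Delta'_i \setminus \Delta^\circ_i = \sum_{j \neq i} E_{ij}$ as reduced Weil divisors.

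Next I would verify the local structure of $\cS^{[n]}$ at a generic point of each $E_{ij}$. Near a reduced configuration $\{p_0, p_1, \ldots, p_{n-1}\}$ with $p_0$ in the smooth locus of $D_i \cap D_j$ and $p_1, \ldots, p_{n-1}$ pairwise distinct in $D_i \setminus \bigcup_{l \neq i} D_l$, the relative Hilbert scheme decomposes \'{e}tale-locally as the fibre product $U_0 \times_{\C[[t]]} U_1 \times_{\C[[t]]} \cdots \times_{\C[[t]]} U_{n-1}$ of local \'{e}tale neighbourhoods $U_k$ of $p_k$ in $\cS$, because the support points are pairwise distinct. Using the snc structure of $(\cS, \cS_0)$, I take $U_0$ \'{e}tale over $\A^3_{x_0,y_0,z_0}$ with $t = x_0 y_0$, $D_i = \{x_0 = 0\}$ and $D_j = \{y_0 = 0\}$, and $U_k$ \'{e}tale over $\A^3_{x_k,y_k,z_k}$ with $t = x_k$ and $D_i = \{x_k = 0\}$ for $k \geq 1$. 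Eliminating $t$ and $x_1, \ldots, x_{n-1}$ via the relations $x_k = t = x_0 y_0$ identifies the fibre product with $\Spec \C[x_0, y_0, z_0, y_1, z_1, \ldots, y_{n-1}, z_{n-1}] \cong \A^{2n+1}$. In this local chart $\cS^{[n]}$ is smooth and $\cS^{[n]}_{0,\mathrm{red}} = \{x_0 y_0 = 0\}$ is an snc divisor whose two components are $\Delta'_i = \{x_0 = 0\}$ and a second component $\{y_0 = 0\}$ parametrising length-$n$ subschemes where $p_0$ is deformed into $D_j$ and the remaining points stay in $D_i$.

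Applying the snc adjunction formula in this local model yields $\Diff_{\Delta'_i}(\cS^{[n]}_{0,\mathrm{red}}) = E_{ij}$ with coefficient one at the generic point of $E_{ij}$. The analogous local computation at a generic configuration in $\Delta^\circ_i$, where every $U_k$ has $t = x_k$, gives $\cS^{[n]} \cong \A^{2n+1}$ with $\cS^{[n]}_{0,\mathrm{red}}$ the single smooth divisor $\Delta'_i$, so the different vanishes there. Combining the codimension-one calculations yields $\Diff_{\Delta'_i}(\cS^{[n]}_{0,\mathrm{red}}) = \sum_{j \neq i} E_{ij}$, which as a reduced divisor is precisely $\Delta'_i \setminus \Delta^\circ_i$.

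The main obstacle is the second step, namely the \'{e}tale-local factorisation of the relative Hilbert scheme as a fibre product of three-dimensional pieces of $\cS$ over $\Spec \C[[t]]$, together with the explicit variable elimination showing that the resulting space is smooth of dimension $2n+1$ with snc special fibre of the prescribed shape. Once this local model is in hand, both the snc adjunction computation along $E_{ij}$ and the verification that neither triple intersections nor non-reduced configurations contribute extra divisorial components are routine codimension arguments.
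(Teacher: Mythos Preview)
Your proof is correct and complete, but it proceeds differently from the paper's argument. The paper dispatches the lemma in a few lines: the inequality $\Diff_{\Delta'_i}(\cS^{[n]}_{0,\mathrm{red}}) \geqslant (\cS^{[n]}_0 \setminus \Delta'_i)|_{\Delta'_i}$ is declared clear, and for the reverse direction it suffices to show that $\cS^{[n]}$ is regular along $\Delta^\circ_i$, which they deduce from Fogarty's unobstructedness theorem for Hilbert schemes of points on a smooth surface together with a smoothness criterion from Koll\'ar's \emph{Rational Curves}. By contrast, you work out explicit \'etale-local models at every codimension-one point of $\Delta'_i$: the fibre-product decomposition of the relative Hilbert scheme near a reduced configuration, followed by variable elimination, exhibits $(\cS^{[n]}, \cS^{[n]}_{0,\mathrm{red}})$ as an snc pair in a neighbourhood of the generic point of each $E_{ij}$ and as smooth with irreducible special fibre over $\Delta^\circ_i$, so snc adjunction reads off the different directly. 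Your approach is more self-contained and makes the geometry of the other components of $\cS^{[n]}_{0,\mathrm{red}}$ meeting $\Delta'_i$ completely explicit, whereas the paper's route is shorter but imports two black boxes from deformation theory. The step you flagged as the main obstacle---the \'etale-local product decomposition of the Hilbert scheme near a configuration with pairwise distinct support---is indeed standard, and your elimination of $x_1,\ldots,x_{n-1}$ via $x_k = x_0 y_0$ is exactly the computation that shows regularity along $E_{ij}$.
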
 
\begin{proof}
By construction, the divisor $\cS^{(n)}_0-\Delta_i$ intersects $\Delta_i$ along the complement $\Delta_i \setminus \Delta^\circ_i$; thus by \cref{dualcomplexstrataI} $\Diff^*_{\Delta_i}(\cS^{(n)}_{0}) \geqslant \Delta_i \setminus \Delta^\circ_i$. For the equality, 
it is sufficient to prove that $\cS^{(n)}$ and $\Delta_i$ are regular along the generic point $D$ of any divisor in $\Delta^\circ_i$; see \cite[Proposition 4.5 (1)]{Kollar2013}. Now, $D$ is not contained in the singular locus of $\Delta_i$ by dimensional reason. Therefore, locally at $D$, the central fibre $\cS^{(n)}_0 \stackrel{\text{loc. at }D}{=} 
\Delta_i$ is smooth and Cartier in $\cS^{(n)}$, so $\cS^{(n)}$ and $\Delta_i$ are regular along $D$, as required.
\end{proof}

\begin{proof}[Alternative proof of Theorem \ref{dualcomplexhilbertscheme}] In view of \cref{degenerationHIlbertscheme} and \cref{dualcomplexirreduciblecompdegeneration}, we have that
$$
\mathcal{D}(\partial M_{\mathrm{B}}(\mathrm{E},\mathrm{GL}_n))  
\simeq_{\mathrm{\PL}} \mathcal{DMR}(\Delta_i^{\mathrm{dlt}} ,\Diff^*_{\Delta^{\mathrm{dlt}}_i}(\cS^{[n], \mathrm{dlt}}_{0, \mathrm{red}})) 
\simeq_{\mathrm{\PL}} \mathbb{S}^{2n-1}.$$
\end{proof}

\subsection{A degeneration of generalised Kummer surfaces to $M_{\mathrm{B}}(\mathrm{E},\mathrm{SL}_n)$}\label{alternativeproofdiscretevalued2}
\spa{Following the same strategy as in \cref{alternativeproofindiscretelyvalued}, one can invoke the global-to-local argument to reduce the proof of Theorem \ref{dual boundary complex SL_n} to the construction of a suitable degeneration. Observe that the role of the Hilbert scheme on a K3 surface in~\cref{dualcomplexirreduciblecompdegeneration} is replaced by the generalised Kummer variety of an abelian surface.}

\begin{prop}[Theorem G]\label{construct dlt compact}
There exist a good minimal dlt model $\cK^{\dlt}_{n-1}$ of a generalised Kummer variety and an irreducible component $\Delta^{\dlt}$ of the special fibre $\cK^{\dlt}_{n-1, 0}$ such that the pair $(\Delta^{\dlt}, \Diff^*_{\Delta^{\dlt}}(\cK^{\dlt}_{n-1, 0, \operatorname{red}}))$ is crepant birational to an lc logCY compactification of $M_{\mathrm{B}}(\mathrm{E},\mathrm{SL}_n)$.
\end{prop}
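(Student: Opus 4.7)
The plan is to mirror, in the $\mathrm{SL}_n$-setting, the Dwork-style degeneration used in \S\ref{alternativeproofindiscretelyvalued} for Theorem~\ref{dualcomplexhilbertscheme}: instead of degenerating a K3 surface to a union of hyperplanes in $\PP^3$ and passing to the relative Hilbert scheme, I will degenerate an abelian surface to a maximally unipotent semistable special fibre made of toric surfaces and pass to the relative generalised Kummer variety. The natural candidate is to take $\cA = \cE \times \cE$ where $\cE$ is a Tate curve over $\C[[t]]$ with an $N$-gon special fibre; then $\cA_0$ is a toric configuration of copies of $\PP^1\times\PP^1$, and the combinatorial input on $\cE$ supplied by Appendix~\ref{appendixA} identifies the components of $\cA_0$ with explicit toric surfaces glued along their toric boundaries.

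\textbf{Step 1: the model of the generalised Kummer.} Form the relative symmetric product $\cA^{(n)}$ and the relative Hilbert scheme $\cA^{[n]}$ over $\C[[t]]$, together with the relative Hilbert--Chow morphism and the relative Albanese (sum) map $\sigma_n\colon \cA^{[n]}\to \cA$. Define $\cK_{n-1}$ to be the scheme-theoretic fibre of $\sigma_n$ over the identity section; this is a flat model of a generalised Kummer manifold of rank $n-1$, and the pair $(\cK_{n-1},\cK_{n-1,0,\mathrm{red}})$ is lc logCY over the base.

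\textbf{Step 2: good minimal dlt modification.} Exactly as in the proof of Theorem~\ref{dualcomplexhilbertscheme} in \S\ref{alternativeproofindiscretelyvalued}, pick a log resolution
$g\colon (\cY,\Delta_\cY)\to (\cK_{n-1},\cK_{n-1,0})$ which is an isomorphism on the snc locus, and run the $(K_{\cY/\C[[t]]}+\Delta_\cY)$-MMP with scaling relative to $\cK_{n-1}$. The output is a $\Q$-factorial dlt model $h\colon \cK^{\dlt}_{n-1}\to \cK_{n-1}$ with $\cK^{\dlt}_{n-1,0,\mathrm{red}} = h^{-1}_*\cK_{n-1,0}+E'$ and with $K_{\cK^{\dlt}_{n-1}}+\cK^{\dlt}_{n-1,0,\mathrm{red}}$ semiample (hence a good minimal dlt model in the sense of Definition~\ref{good minimal model}), the crepancy following from \cite[\S1.35]{Kollar2013} and the fact that $(\cA^{(n)},\cA^{(n)}_0)$ is lc logCY.

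\textbf{Step 3: identifying the component and its different.} Fix an irreducible component $D\subseteq \cA_0$ (a toric surface inside the $\PP^1\times\PP^1$-configuration), and consider the component $\Delta_0\subseteq \cA^{(n)}_0$ cut out inside $D^{(n)}$ by the sum map to $\cA$; let $\Delta'$ and $\Delta^{\dlt}$ denote its strict transforms on $\cK_{n-1}^{[n]}$ and on $\cK^{\dlt}_{n-1}$ respectively. By \cite[Proposition~4.6]{Kollar2013}, applied as in the proof of Theorem~\ref{dualcomplexhilbertscheme}, the three pairs
\[
(\Delta',\mathrm{Diff}_{\Delta'}(\cK_{n-1,0,\mathrm{red}})) \sim (\Delta_0,\mathrm{Diff}_{\Delta_0}(\cA^{(n)}_0)) \sim (\Delta^{\dlt},\mathrm{Diff}_{\Delta^{\dlt}}(\cK^{\dlt}_{n-1,0,\mathrm{red}}))
\]
are crepant birational. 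The open stratum of $D$ is a copy of $\C^*\times \C^*$, and, as in Step~1 of the direct proof of Theorem~\ref{dual boundary complex SL_n}, its preimage inside the sum-zero locus in $D^{(n)}$ is precisely the lc logCY compactification $\overline{M}_{\mathrm{SL}_n}\subseteq(\PP^2)^{(n)}$ obtained from the $\mathfrak S_n$-invariant toric closure of the kernel of the multiplication map. To finish, one checks, via the analogue of Lemma~\ref{lemma for different}, that no component of the different on $\Delta'$ is supported on this open stratum, so that $(\Delta^{\dlt},\mathrm{Diff}_{\Delta^{\dlt}})$ is crepant birational to $(\overline{M}_{\mathrm{SL}_n},\partial\overline{M}_{\mathrm{SL}_n})$, which is the claim.

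\textbf{Main obstacle.} The technical heart of the argument is Step~3: one needs the analogue of Lemma~\ref{lemma for different} for the relative generalised Kummer, which in turn requires understanding that length-$n$ subschemes of $\cA_0$ supported in the smooth locus of a toric component $D$ and lying over the identity section of $\cA$ are unobstructed in $\cK_{n-1}$. This is precisely the role played in the proof of Theorem~\ref{dualcomplexhilbertscheme} by Fogarty's smoothness theorem, and replacing it here requires the Tate-curve computations collected in Appendix~\ref{appendixA} (to control the local toric structure of $\cA^{[n]}_0$ and of the Albanese map near such subschemes). The rest of the argument then proceeds formally as in \S\ref{alternativeproofindiscretelyvalued}.
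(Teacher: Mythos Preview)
Your overall strategy is sound and close to the paper's, but there are two genuine gaps and one structural difference worth noting.

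\textbf{The Albanese map is not defined.} In Step~1 you write down a ``relative Albanese (sum) map $\sigma_n\colon \cA^{[n]}\to \cA$'' and take its fibre over the identity section. But $\cA=\cE\times\cE$ is \emph{not} a group scheme over $\C[[t]]$: the special fibre has nodes, and the group law only lives on the N\'eron model $\cN\times\cN$ obtained by deleting them. So $\sigma_n$ does not exist as stated, and your $\cK_{n-1}$ is not defined. The paper handles this carefully: it takes the fibre $\cX_{n-1}$ of the multiplication map on the N\'eron model $(\cN\times\cN)^n\to\cN\times\cN$, then passes to the \emph{closure} $\overline{\cX}_{n-1}$ inside $(\cE\times\cE)^n$, and only then quotients by $\mathfrak{S}_n$ to obtain the singular relative Kummer $\cK^{\mathrm{sing}}_{n-1}$. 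No Hilbert scheme is needed at this stage.

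\textbf{The technical heart is misidentified.} You locate the main obstacle in an unobstructedness argument \`a la Lemma~\ref{lemma for different}. In the paper's route this is not the hard step at all: once one works with $\cK^{\mathrm{sing}}_{n-1}$ as above, the crucial lemma is that the pair $(\cK^{\mathrm{sing}}_{n-1},\cK^{\mathrm{sing}}_{n-1,0})$ is reduced, lc and logCY. This is exactly what Appendix~\ref{appendixA} is for: \'etale-locally the closure $\overline{\cX}_{n-1}$ is the product $\overline{\cV}_{n-1}\times\overline{\cV}_{n-1}$ of the Tate-curve fibres, and Proposition~\ref{pairnormalreducedlc} shows $(\overline{\cV}_{n-1},\overline{\cV}_{n-1,0})$ is normal, reduced and toric, with $\overline{\cV}_{n-1}\setminus\cV_{n-1}$ of codimension two (the latter is what makes the logCY verification go through). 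Once lc logCY is known, the identification of the relevant component is immediate: restricting to the identity component $\C^*$ of $\cN_0$ recovers the description of $M_{\mathrm{SL}_n}$ from~\eqref{descriptionMSlr}, and the corresponding component of $\cK^{\mathrm{sing}}_{n-1,0}$ is a lc logCY compactification in $(\PP^1\times\PP^1)^{(n)}$ (not $(\PP^2)^{(n)}$ as you wrote). The dlt model is then produced by extracting the Hilbert--Chow exceptional divisors via \cite[Corollary~1.38]{Kollar2013}; no separate different computation is required.
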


\begin{proof}
Let $\mathscr{E}$ be the following degeneration of an elliptic curve $E$ with multiplicative reduction    
\[
\mathscr{E} \coloneqq \{ xyz + t(x^3+y^3+z^3) = 0 \} \subset \PP^2_{[x:y:z]}\times \Spec(\C[[t]]) \,;
 \]
it is a good minimal snc model defined over a curve, with reduced special fibre. The N\'{e}ron model $\cN$ of $\cE$ is the group scheme obtained from $\cE$ by removing the nodes of the special fibre; see \cite[Theorem 10.2.14]{Liu2002}. The multiplication on $\cN$ induces the morphism
\begin{align*}
    m_n\colon (\cN \times \cN)^n \coloneqq  (\cN \times_{\C[[t]]} \cN) \times_{\C[[t]]} \ldots \times_{\C[[t]]} (\cN \times_{\C[[t]]} \cN) & \longrightarrow (\cN \times_{\C[[t]]} \cN),\\
    ((n_1,n_2), \ldots, (n_{2n-1}, n_{2n})) & \mapsto \bigg(\prod^{n}_{i=1}n_{2i-1},\prod^{n}_{i=1}n_{2i}\bigg).
\end{align*}
Let $\cX_{n-1}$ be the inverse image under $m_n$ of the identity section of $\cN \times_{\C[[t]]} \cN$, and let $\overline{\cX}_{n-1}$ be the closure  of $\cX_{n-1}$ in $(\cE \times \cE)^n$; this is invariant under the action of the symmetric group $\mathfrak{S}_n$, which acts by permuting the factors of $(\cE \times \cE)^n$. 
The quotient \[\cK^{\mathrm{sing}}_{n-1} \coloneqq \overline{\cX}_{n-1}/\mathfrak{S}_n.\]
is a model of the singular generalised Kummer variety $K^{\mathrm{sing}}_{n-1}(E \times E)$ associated to the abelian surface $E \times E$:
\begin{equation} \label{equ: diagram Ksing}
\arraycolsep=2pt 
\begin{array}{ccccccc}
     & & \cX_{n-1} & \subset & (\cN \times \cN)^n & &  \\
     & & \cap &  & \cap &  &  \\
     & & \overline{\cX}_{n-1} & \subset &  (\mathscr{E} \times \mathscr{E})^n & & \\
     & & \downarrow &  &  &  & \\
     K^{\mathrm{sing}}_{n-1}(E \times E) & \subset & \cK^{\mathrm{sing}}_{n-1}. &  &  &  &
\end{array}
\end{equation}
We denote by $\cK^{\mathrm{sing}}_{n-1,0}$ the special fibre of $\cK^{\mathrm{sing}}_{n-1}$.

\begin{lemma} \label{lem:reduced lc logCY}
The pair $(\cK^{\mathrm{sing}}_{n-1},\cK^{\mathrm{sing}}_{n-1,0})$ is reduced lc logCY.
\end{lemma}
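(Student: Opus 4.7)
The plan is to exploit the two-factor structure of $\cN \times \cN$ to reduce the statement to the Tate curve computations alluded to in \S\ref{Tatecurve} (and detailed in \cref{appendixA}), and then to descend through the $\mathfrak{S}_n$-quotient via \cite[Proposition 5.20]{KollarMori}, as already used in \cref{quotientlogCYskeleton}.

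First, I would identify the scheme $\cX_{n-1}$ as a product. Since the multiplication map $m_n \colon (\cN\times \cN)^n \to \cN \times \cN$ is the product of the two componentwise multiplications $\cN^n \to \cN$, the fibre of the identity section decomposes as $\cX_{n-1} \simeq \cV_{n-1} \times_{\C[[t]]} \cV_{n-1}$. Because $\cV_{n-1}$ is geometrically integral, taking closures in $\overline{\cG}_m^n$ commutes with the fibre product, so locally on $\cE$ (where we can pull back to the Tate curve cover $\overline{\cG}_m \to \cE$) we obtain an identification
\[
\overline{\cX}_{n-1} \simeq \overline{\cV}_{n-1} \times_{\C[[t]]} \overline{\cV}_{n-1}.
\]
The local statement from \cref{Tatecurve} that $(\overline{\cV}_{n-1},\overline{\cV}_{n-1,0})$ is normal, reduced, and toric then transfers to $(\overline{\cX}_{n-1},\overline{\cX}_{n-1,0})$. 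Consequently this latter pair is itself (étale-)locally toric with the toric boundary as the only divisor of the logarithmic structure: it is normal, reduced, lc, and $K_{\overline{\cX}_{n-1}/\C[[t]]} + \overline{\cX}_{n-1,0} \sim 0$.

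Next, I would analyse the action of $\mathfrak{S}_n$ on $\overline{\cX}_{n-1}$, which permutes the pairs $(x_i,y_i)$ in $(\cE\times \cE)^n$. For any transposition $(ij)$, the fixed locus is cut out by the two independent conditions $x_i = x_j$ and $y_i = y_j$, hence has codimension $2$ in the $2n$-dimensional variety $(\cE\times\cE)^n$; restricting to $\overline{\cX}_{n-1}$ (of dimension $2n-2$) the fixed locus still has codimension $\geqslant 1$ inside the codimension-$2$ stratum, so actually codimension $\geqslant 2$ there too. Combined with the codimension-two statement $\overline{\cV}_{n-1}\cap (\overline{\cG}_m^n \setminus \cG_m^n)$ recalled in \cref{Tatecurve}, this shows that the quotient morphism $q\colon \overline{\cX}_{n-1} \to \cK^{\mathrm{sing}}_{n-1}$ is quasi-\'etale.

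Finally, I would conclude the argument exactly as in the proof of \cref{quotientlogCYskeleton}. Since $q$ is quasi-\'etale, one has $q^*(K_{\cK^{\mathrm{sing}}_{n-1}} + \cK^{\mathrm{sing}}_{n-1,0}) \sim_{\Q} K_{\overline{\cX}_{n-1}} + \overline{\cX}_{n-1,0}$, which is $\Q$-linearly trivial by the toric description. Hence $K_{\cK^{\mathrm{sing}}_{n-1}} + \cK^{\mathrm{sing}}_{n-1,0} \sim_{\Q} 0$ as well. Moreover, \cite[Proposition 5.20]{KollarMori} transfers the lc property from $(\overline{\cX}_{n-1},\overline{\cX}_{n-1,0})$ to the quotient, and reducedness of $\cK^{\mathrm{sing}}_{n-1,0}$ follows because $\mathfrak{S}_n$ preserves the base and $\overline{\cX}_{n-1,0}$ is reduced. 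The main obstacle in this plan is the step transferring the toric local picture of $\overline{\cV}_{n-1}$ into a statement about the closure $\overline{\cX}_{n-1}$ inside the genuinely elliptic model $(\cE\times\cE)^n$: this requires the explicit coordinate computations on the Tate curve deferred to \cref{appendixA}, and in particular justifying that closures commute with the relevant fibre products.
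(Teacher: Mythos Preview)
Your overall strategy matches the paper's: reduce to $(\overline{\cX}_{n-1},\overline{\cX}_{n-1,0})$ via the quasi-\'etale $\mathfrak{S}_n$-quotient and \cite[Proposition 5.20]{KollarMori}, and analyse $\overline{\cX}_{n-1}$ through its \'etale-local identification with $\overline{\cV}_{n-1}\times_{\C[[t]]}\overline{\cV}_{n-1}$ coming from the Tate curve cover. The deductions of normality, reducedness of the special fibre, and log canonicity from this local picture are fine, and so is the quasi-\'etaleness of $q$.

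The gap is in the logCY step. From the fact that $(\overline{\cX}_{n-1},\overline{\cX}_{n-1,0})$ is \emph{\'etale-locally} toric with full toric boundary you only get that $K_{\overline{\cX}_{n-1}/\C[[t]]}+\overline{\cX}_{n-1,0}$ is Cartier and locally trivial; this does not by itself force global $\Q$-linear triviality (think of an Enriques surface with empty boundary, which is \'etale-locally toric yet has $2$-torsion canonical class). Your sentence ``$K_{\overline{\cX}_{n-1}/\C[[t]]} + \overline{\cX}_{n-1,0} \sim 0$'' therefore needs an additional argument.

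The paper supplies exactly this missing global input by a different route: it uses the codimension-two statement from \cref{pairnormalreducedlc} (that $\overline{\cV}_{n-1}\cap(\overline{\cG}_m^n\setminus\cG_m^n)$ has codimension two) to reduce the question to the open $\cX_{n-1}\subset\overline{\cX}_{n-1}$, and there computes directly
\[
K_{\cX_{n-1}/\C[[t]]} \;\sim\; K_{(\cN\times\cN)^n/\C[[t]]}\big|_{\cX_{n-1}}\otimes \det\!\big(N_{\cX_{n-1}}((\cN\times\cN)^n)\big)\;\sim\;0,
\]
using that $\cX_{n-1}$ is a fibre of the smooth group-scheme homomorphism $m_n$ (so the normal bundle is trivial) and that $(\cN\times\cN)^n$ is Calabi--Yau. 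In your write-up the codimension-two Tate-curve fact is invoked only for quasi-\'etaleness, where it is not needed; its real role in the paper is precisely to justify extending the logCY computation from $\cX_{n-1}$ to its closure.
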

The proof of \cref{lem:reduced lc logCY} relies on local computations on the Tate curve, the uniformisation of $\mathscr{E}$; see \cref{appendixA} for details.
\begin{proof}
We omit the subscript $n-1$ for brevity. Since the quotient map $ \overline{\cX}\to \cK^{\mathrm{sing}}$ is quasi-\'{e}tale, it is equivalent to prove that the pair $(\overline{\cX}, \overline{\cX}_{0})$ is reduced lc logCY. By \cref{cor:appendix}, $(\overline{\cX}, \overline{\cX}_{0})$ is reduced lc.
Further, $\overline{\cX} \setminus \cX$ has codimension two in $\overline{\cX}$, again by \cref{cor:appendix}. Therefore, to verify that $K_{\overline{\cX}/\C[[t]]}+\overline{\cX}_0$ is trivial, it suffices to check that the restriction to $\cX$
$$
\left(K_{\overline{\cX}/\C[[t]]}+\overline{\cX}_0 \right)|_{\cX} = K_{\cX/\C[[t]]} + \cX_0 \sim K_{\cX/\C[[t]]}
$$ 
is trivial. 
To this end, denote by $N_{\cX}$ the normal bundle of $\cX$ in $(\cN \times \cN)^n$. As $\cX$ is a fibre of the locally trivial fibration $m_n$, $\det (N_{\cX})$ is trivial. In particular, we have
    \[ 
    K_{\cX/\C[[t]]} \sim K_{(\cN \times \cN)^n /\C[[t]]}|_{\cX} \otimes \det (N_{\cX}) \sim 0,
    \]
since $(\cN \times \cN)^n$ is Calabi--Yau. Thus, the pair $(\overline{\cX}, \overline{\cX}_{0})$ is logCY, as required.
\end{proof}
   
We have shown that, given $\cN$ as input, the above construction yields the lc logCY pair $(\cK^{\mathrm{sing}}_{n-1},\cK^{\mathrm{sing}}_{n-1,0})$. Now restricting the previous construction to the identity component of the special fibre of $\cN$, which is isomorphic to $\C^*$, we obtain \eqref{eq:constructionMSL}. As a consequence, an irreducible component of the special fibre $\cK^{\mathrm{sing}}_{n-1, 0}$ is the lc logCY compatification $\overline{M}$ in $(\PP^1 \times \PP^1)^{(n)}$ constructed in Step 1 of the proof of \cref{dual boundary complex SL_n}.
   
Finally, a good minimal dlt model $\cK^{\dlt}_{n-1}$ of the generalised Kummer variety $K_{n-1}(E \times E)$ can be obtained from $\cK^{\mathrm{sing}}_{n-1}$, by extracting the exceptional divisors of the Hilbert--Chow morphism
$\rho_{\mathrm{HC}}\colon K_{n-1}(E \times E) \to  K^{\mathrm{sing}}_{n-1}(E \times E)$; see \cite[Corollary 1.38]{Kollar2013}.
Exactly as in the proof of \cref{degenerationHIlbertscheme}, there exists then an irreducible component $\Delta^{\dlt}$ of the special fibre $\cK^{\dlt}_{n-1,0}$ such that the pair $(\Delta^{\dlt}, \Diff^*_{\Delta^{\dlt}}(\cK^{\dlt}_{ 0, n-1,\operatorname{red}}))$ is crepant birational to an lc logCY compactification of $M_{\mathrm{B}}(\mathrm{E},\mathrm{SL}_n)$.
\end{proof}

\begin{proof}[Alternative proof of~\cref{dual boundary complex SL_n}]
It follows from ~\cref{construct dlt compact}, \cite[Proposition 6.3.4]{BrownMazzon} and \cref{linkdualcomplex}.
\end{proof}

\subsection{The full geometric P=W conjecture in genus one} \label{sec:fullP=Wgenusone}
\begin{theorem}[Theorem B]\label{thm:P=Wconjecturegeomgenusonetext}
Let $\mathrm{E}$ be a compact Riemann surface of genus one, and $G$ be either $\mathrm{GL}_n$ or $\mathrm{SL}_n$. Then the geometric P=W conjecture holds for $M_{\mathrm{B}}(\mathrm{E},G)$.
\end{theorem}

\begin{proof}
We prove the statement for $G=\Gl$; the proof works for $G=\Sl$ mutatis mutandis. The geometric P=W conjecture in genus one consists of a subtle generalisation of the proof of \cref{thm:P=Wconjecturegeomrankonetext}.

By \cref{dualcomplexhilbertscheme} we can identify the dual boundary complex $\mathcal{D}(\partial M_{\mathrm{B}}(E,\mathrm{GL}_n))$  with $\mathbb{S}^{2n-1}$. Then, as in \cref{thm:P=Wconjecturegeomrankonetext}, it is sufficient to prove that the maps $\chi \circ \Psi^{-1}$ and $ev$ restricted to the link $\partial N_{\mathrm{B}}$ are homotopic 
\[\chi \circ \Psi^{-1}|_{\partial N_{\mathrm{B}}}\colon  \partial N_{\mathrm{B}} \to \mathbb{S}^{2n-1}, \qquad ev|_{\partial N_{\mathrm{B}}}\colon  \partial N_{\mathrm{B}} \to \mathbb{S}^{2n-1}.\]
\begin{enumerate}
\item[Step 1.] The goal is to choose the map $ev$ matching locally the moment map of a toric variety. To this end, we construct a special dlt logCY compactification $(\overline{X}_3, \Delta_3)$ of $M_{\mathrm{B}}(\mathrm{E},\Gl)$.

Let $(\toric, \Delta)$ be a smooth projective toric surface pair with at least $n$ torus-fixed point. Let $z \in \toric^{n}$ be an $n$-tuple of $n$ distinct fixed points, and $q\colon  \toric^{n}\to \toric^{(n)}$ be the quotient map to the symmetric product $\toric^{(n)}$. The pair $(\toric^{(n)}, \Delta^{(n)})$ has normal crossings at $q(z)$, since $(\toric^{n}, \Delta^{n})$ is an snc pair and the map $q$ is \'{e}tale over $q(z)$. 
However, the strata of $\Delta^{(n)}$ meeting at $q(z)$ have self-intersections. To avoid this, a suitable sequence of blowups along (the strict transform of) lc centers of $(\toric^{(n)}, \Delta^{(n)})$ through $q(z)$ yields a birational morphism 
\[\pi\colon  (\overline{X}_1 \supset \Delta_1\coloneqq \pi^{-1}(\Delta^{(n)})^{\mathrm{red}}) \to (\toric^{(n)}, \Delta^{(n)})\] 
and a 0-dimensional stratum $x_1$ in $\Delta_1$, with the following properties: (i) $\pi(x_1)=q(z)$; and (ii) $(\overline{X}_1, \Delta_1) $ is an snc pair at $x_1$ (and not just nc); see \cite[\S 2.4]{deJong96} or \cite{Conrad}. 
By construction, over an \'{e}tale neighbourhood of $q(z)$, the map $\pi\colon  (\overline{X}_1, \Delta_1, x_1) \to (\toric^{(n)}, \Delta^{(n)}, q(z))$ is locally \'{e}tale isomorphic to a toric birational morphism $ (Y,\Delta_Y, y) \to (Z^{n},\Delta^n, z)$. 

Now we want to modify $(\overline{X}_1 \supset \Delta_1)$ to a dlt pair, away from $x_1$. To this end, take a log resolution $\pi_1\colon  \overline{X}_2 \to \overline{X}_1$ of $\Delta_1$ which is an isomorphism over a neighbourhood of $x_1$. A relative MMP over $\overline{X}_1$ as in \cref{existence of dltcomp} ends with a dlt logCY compactification $(\overline{X}_3, \Delta_3)$ of  $M_{\mathrm{B}}(\mathrm{E},\Gl)$, together with a 0-dimensional stratum  $x_3\in \Delta_3$, such that the isomorphism $\overline{X}_3 \setminus \Delta_3 \simeq (Z \setminus \Delta)^{(n)}$ extends to locally \'{e}tale isomorphisms 
\[(\overline{X}_3, \Delta_3, x_3) \simeq (\overline{X}_1, \Delta_1, x_1) \simeq (Y, \Delta_Y, y).\] We summarise the construction in a diagram:
\begin{center}
    \begin{tikzcd}[scale=1.1]
   & (Y, \Delta_Y, y) \arrow{r}{bir.} \arrow[d, "\simeq"', "\text{loc. \'{e}t. at }y"]& (Z^{n}, \Delta^n, z) \arrow{d}{q}\\
   \overline{X}_2 \arrow{r}{\pi_1}  \arrow[rd, "MMP"', dashrightarrow]& (\overline{X}_1, \Delta_1, x_1) \arrow{r}{\pi}&(Z^{(n)}, \Delta^{(n)}, q(z))\\
  & (\overline{X}_3, \Delta_3, x_3). \arrow[u, "\simeq", "\text{loc. at }x_3"'] \arrow[ru, "bir."']&&
    \end{tikzcd}
\end{center}

\item[Step 2.] We construct a distinguished evaluation map $ev$ for the dlt pair $(\overline{X}_3, \Delta_3)$.

Let $\mu\colon  Y \to \R^{2n}$ be a symplectic moment map of $Y$; see \cite[Eq. 12.2.4]{CoxLittleSchenck2011}.
As $Y$ is smooth at $y$, up to a rigid motion, we can suppose that the image of a neighbourhood of $y$ via $\mu =(\mu_1, \ldots, \mu_{2n})$ is a neighbourhood of the origin of the standard orthant in $\R^{2n}$; see Delzant property in \cite[ p. 576]{CoxLittleSchenck2011}. Let $\Delta_{3,j}$ be the irreducible component of $\Delta_3$ which is locally given at $x_3$ by the equation $\{\mu_j=0\}$ (here we implicitly use the local isomorphism $(\overline{X}_3, \Delta_3, x_3) \simeq (Y, \Delta_Y, y)$). 

To construct the evaluation map $ev$, we use the compactification $(\overline{X}_3, \Delta_3)$ and a partition of unity as in \cref{lem:nonemptyassumption}, where in addition we require that the rug functions $\beta_j$ relative to the divisor $\Delta_{3,j}$ equals $\mu_j$ in a neighbourhood of $x_3$.
We can also set $\partial N_{\mathrm{B}}$ to be given, locally at $x_3$, by the equation
$\big\{\prod^{2n}_{j=1} \mu_j = \epsilon\big\}$,
namely the inverse image under $\mu$ of a real smoothing of the corner at the origin of the standard orthant. This is a generalisation of \cite[p. 7 (p. 231)]{Mumford61} in high dimension.

The upshot is that the fibres of $ev$ coincide near $x_3$ with the smooth fibres of the moment map $\mu$.
\item[Step 3.] Up to a diffeomorphism of the Hitchin base, the map $\chi \circ \Psi^{-1}\colon  (\C^*\times\C^*)^{(n)} \to \C^n$ is the $\mathfrak{S}_n$-quotient of the $\Log$ map
\[(\C^* \times \C^*)^{n} \to \R^{2n}, \qquad \Log\colon (z_1, \ldots , z_{2n}) \mapsto \bigg(-\log|z_1|, \ldots, -\log|z_{2n}|\bigg),\]
by \cite[\S 7.4]{GoldmanXia2008}.
Again up to a diffeomorphism of the base, the map $\Log$ can be identified with the restriction of $\mu\colon  Y \to \R^{2n}$ to the big torus of $Y$ by the explicit description of $\mu$ in \cite[Eq. 12.2.4]{CoxLittleSchenck2011}. 
\item[Step 4.] Finally, the fibres of $ev$ coincide near $x_3$ with the smooth fibres of the moment map $\mu$, or equivalently of the $\Log$ map, and so of $\chi \circ \Psi^{-1}$. In other words, we proved that we can choose $ev$ and $\chi \circ \Psi^{-1}$ such that they coincide on an open neighbourhood of $\mathbb{S}^{2n-1}$. By the proof of \cite[\S 7, Lemma 4]{Milnor1997}, the two maps are homotopic.
\end{enumerate}
\end{proof}

\section{Cohomological vs geometric P=W conjecture}\label{sec:cohomolovsgeometric}
In this section we compare the geometric P=W with the cohomological P=W conjecture of de Cataldo, Hausel and Migliorini \cite{CataldoHauselMigliorini2012} in arbitrary rank and genus. 

\subsection{The cohomological P=W conjecture}\label{sec:cohomologicalP=Wconjecture}
\spa{We first recall the definition of the perverse and weight filtrations. Denote by $IH^*(X)$ the intersection cohomology of a complex variety $X$ with middle perversity and rational coefficients. It is the hypercohomology of the intersection cohomology complex $\mathcal{IC}_X$, i.e.\ $IH^*(X) = \mathbb{H}^*(X, \mathcal{IC}_X)$; see \cite{GM83}. Recall that $IH^*(X)$ carries a canonical mixed Hodge structure, and so a \textbf{weight filtration} 
$$ W_k IH^*(X)\subseteq IH^*(X).$$
}

\spa{Now let $f\colon  X \to Y$ be a projective morphism of algebraic varieties, with $Y\subset \C^m$ affine and $\dim X = 2 \dim Y$. Then the intersection cohomology of $X$ is endowed with the \textbf{perverse filtration} associated to $f$, i.e.\
\begin{equation}\label{def:perversefiltration}
    P^{f}_k IH^d(X)\coloneqq \mathrm{Ker}\left\lbrace IH^d(X)\rightarrow \mathbb{H}^d(f^{-1}(\Lambda^{d-k-1}), \mathcal{IC}_X|_{f^{-1}(\Lambda^{d-k-1})}) \subseteq IH^d(X)
\right \rbrace,
\end{equation}
where $\Lambda^s\subset Y$ is a general $s$-dimensional linear section of $Y\subset \C^m$; see \cite[Theorem 4.1.1]{deCataldoMigliorini2010}. 
}

 Denoting by $\chi \colon M_{\mathrm{Dol}} \to \C^{N/2}$ the Hitchin map as in  \eqref{eq:Hitchinfibration}, we can now state the cohomological P=W conjecture.
\begin{conjecture}[Cohomological P=W conjecture]\label{conj:cohomP=W} For any $k \in \N$, we have
\[P^{\chi}_k IH^*(M_{\mathrm{Dol}})= \Psi^* W_{2k}IH^*(M_{\mathrm{B}})=\Psi^* W_{2k+1}IH^*(M_{\mathrm{B}}).\]
\end{conjecture}
In particular, \cref{conj:cohomP=W} implies the following \cref{conj:P=Wtopdegree}.
\begin{conjecture}[Cohomological P=W conjecture at the highest weight]\label{conj:P=Wtopdegree} 
For $N\coloneqq \dim M_{\mathrm{Dol}}$
\begin{align*}
    P^{\chi}_{N-1} IH^*(M_{\mathrm{Dol}}) & = \Psi^* W_{2N-1}IH^*(M_{\mathrm{B}}) \\
\Gr^P_{N}IH^*(M_{\mathrm{Dol}}) & \simeq \Gr^W _{2N}IH^*(M_{\mathrm{B}}).
\end{align*}
\end{conjecture}

We refer the interested reader to \cite{CataldoHauselMigliorini2012, deCataldoMaulikShen2019} and to the excellent survey~\cite{Migliorini} for twisted smooth character varieties; to \cite[Question 4.1.7]{deCataldoMaulik2018}, \cite{FelisettiMauri2020} and \cite{Mauri20} for the untwisted and singular case, namely the setting of this paper.

\subsection{An identification of Lagrangian tori}\label{sec:p=wgeomcohom} 
In order to relate the cohomological P=W conjecture to the geometric one, we need the definition of profound torus as in \cite{Harder2019}.

\begin{definition}\label{def:profoundtorus}
Let $x$ be a zero-dimensional stratum of a dlt pair $(X, \Delta)$. Choose local coordinates $z_1, \ldots, z_N$ centered at $x$ with $\Delta=\{ z_1 \cdot \ldots \cdot z_N =0\}$. For fixed radii $0 < r_j \ll 1$, $j=1, \ldots, N$, a \textbf{profound torus} $\mathbb{T}_x$ is 
\[\mathbb{T}_x = \{ (r_1 e^{i \theta_1}, \ldots, r_N e^{i \theta_N})\colon  \theta_1, \ldots, \theta_N \in [0,2\pi)\} \subset X.\]
\end{definition}
\begin{remark}
The ambient-isotopy type of $\mathbb{T}_x \subset X$ does not depend on the choice of the coordinates: $\mathbb{T}_x$ is a torus whose fundamental group is generated by $N$ loops $\gamma_j$, with $j=1, \ldots, N$, each of them winding around the divisor $\{z_j=0\}$; or equivalently, $\mathbb{T}_x$ is homotopic to a deleted neighbourhood of $x$.
\end{remark}
\spa{The geometric P=W conjecture claims the existence of a homotopy
\begin{equation}\label{eq:homotopy}
    \Theta \colon N^*_{\mathrm{Dol}}\times [0,1] \to \mathcal{D}(\partial M_{\mathrm{B}})
\end{equation}
between the maps $r \circ \chi$ and $ev \circ \Psi$.} 

\begin{Assumption}\label{assump:genericsmoothness}
Let $\mathrm{Sing} M_{\mathrm{Dol}}$ be the singular locus of $M_{\mathrm{Dol}}$. Then $\Theta((N_{\mathrm{Dol}}^*\cap \mathrm{Sing} M_{\mathrm{Dol}})\times [0,1])$ is strictly contained in $\mathcal{D}(\partial M_{\mathrm{B}})$.
\end{Assumption}

\begin{theorem}[Theorem A%: Cohomological vs geometric P=W conjecture
]\label{thm:cohomologicalvsgeometric}
Under \cref{assump:genericsmoothness}, the geometric P=W conjecture implies the cohomological P=W conjecture at the highest weight.
\end{theorem}

\begin{proof}
Consider the homotopy $\Theta$ in \eqref{eq:homotopy}. By \cref{assump:genericsmoothness} and the density of smooth functions, there exists an open set $V$ in the locus where $\mathcal{D}( \partial M_{\mathrm{B}})$ is a manifold such that over $V$ we can perturbe $\Theta$ (and $r$)  to a smooth homotopy between $r \circ \chi$ and $ev \circ \Psi$. 

Let $z$ be a $\Theta$-regular value in $V$. Then $\Theta^{-1}(z)$ is a cobordism between $(r \circ \chi)^{-1}(z)$ and $(ev \circ \Psi)^{-1}(z)$. We first describe the topology of these fibres:
\begin{enumerate}
    \item A general fibre of the Hitchin map $M_{\mathrm{Dol}} \to \Lambda$ is an N-dimensional torus $\mathbb{T}_{\mathrm{Dol}}$. Hence, the general fibre of $\chi \colon N^*_{\mathrm{Dol}} \to \mathbb{S}^{N-1}$ is fibred in tori $\mathbb{T}_{\mathrm{Dol}}$ and homeomorphic to $(0,1) \times \mathbb{T}_{\mathrm{Dol}}$. Hence, there exists a homeomorphism
\[(r \circ \chi)^{-1}(z) \simeq \bigsqcup_{w \in r^{-1}(z)} (0,1) \times \mathbb{T}_{\mathrm{Dol}}.\]
\item Without loss of generality we can suppose that $z$ lies in the interior of a maximal cell of $\mathcal{D}(\partial M_{\mathrm{B}})$ corresponding to the 0-dimensional stratum $\Delta_J= \bigcap_{j \in J}\Delta_j$. Assume also that the evaluation map $ev$ is constructed by means of the partition of unity in \cref{lem:nonemptyassumption}, where in particular we require that the rug functions $\beta_j$ equals $|f_j|^2$ for a local equation $f_j$ of $\Delta_j$ at $\Delta_J$. By \eqref{eq:evexplicit}, the space $ev^{-1}(z)$ is fibred in profound tori around $\Delta_J$
\[\mathbb{T}_{\mathrm{B}}=\{x \in M_{\mathrm{B}} \colon |f_j(x)|=r_j\}\]
for some real numbers $r_j>0$. Hence, there exist homeomorphisms
\[(ev \circ \Psi)^{-1}(z) \simeq ev^{-1}(z) \simeq (0,1) \times \mathbb{T}_{\mathrm{B}}.\]
\end{enumerate}
In particular, we obtain that 
\[H^{N}(\chi^{-1}(w))\simeq H^N(\mathbb{T}_{\mathrm{Dol}})\simeq \Q \qquad H^{N}((ev \circ \Psi)^{-1}(z))\simeq H^N(\mathbb{T}_{\mathrm{B}})\simeq \Q,\]
with $w \in r^{-1}(z)$. By Stokes' theorem, there exists an isomorphism $\mathfrak{a}\colon  H^{N}((ev \circ \Psi)^{-1}(z)) \to H^{N}(\chi^{-1}(w))$ that fits into the following commutative triangle 
  \begin{equation}\label{eq:triangle}
    \begin{tikzcd}[scale=1.1]
    H^{N}(\Theta^{-1}(z)) \arrow{r}\arrow{rd} &  H^{N}(\chi^{-1}(w)) \simeq H^N(\mathbb{T}_{\mathrm{Dol}})\simeq \Q \\
    & H^{N}((ev \circ \varPsi)^{-1}(z))\simeq H^N(\mathbb{T}_{\mathrm{B}})\simeq \Q,\arrow[u,"\simeq","\mathfrak{a}"']
    \end{tikzcd}
\end{equation}  
and the other maps are the natural restriction maps. Indeed, for any closed form $\omega \in H^{N}(\Theta^{-1}(z), \R)$ we have
\begin{align*}
    0 =\int_{\Theta^{-1}(z)}d \omega & = \int_{(r \circ \chi)^{-1}(z)}\omega - \int_{(ev \circ \varPsi)^{-1}(z)}\omega \\
    & = \deg(r) \int_{ \chi^{-1}(w)}\omega - \int_{(ev \circ \varPsi)^{-1}(z)}\omega=\pm \int_{ \chi^{-1}(w)}\omega - \int_{(ev \circ \varPsi)^{-1}(z)}\omega.
\end{align*}
This means that there exists a morphism $\mathfrak{a}$ which makes the triangle \eqref{eq:triangle} commutative. The restriction  \[\mathrm{rest}_{\mathrm{Dol}} \colon H^N(M_{\mathrm{Dol}}) \simeq H^N(M_{\mathrm{Dol}} \times [0,1]) \to H^{N}(\Theta^{-1}(z)) \to  H^{N}(\chi^{-1}(w))\simeq H^N(\mathbb{T}_{\mathrm{Dol}})\] 
is surjective since the restriction of the top power of a $\chi$-relative ample class spans $H^N(\mathbb{T}_{\mathrm{Dol}}) \simeq \Q$. This implies that $\mathfrak{a}\ \neq 0$, i.e. it is an isomorphism.

Consider now the following commutative diagram
\begin{equation*}
   \begin{tikzcd}[scale=1.1]
   \chi^{-1}(w) \arrow[hookrightarrow, r] \arrow[hookrightarrow, d] & N^*_{\mathrm{Dol}} \times \{0\} \arrow[hookrightarrow, d] \arrow[r, "\chi"] & \mathbb{S}^{N-1} \arrow[d, "r"]\\
   \Theta^{-1}(z) \arrow[hookrightarrow, r] &  N^*_{\mathrm{Dol}} \times [0,1]  \arrow[r, "\Theta"] & \mathcal{D}(\partial M_{\mathrm{B}})\\
   (ev \circ \Psi)^{-1}(z) \arrow[hookrightarrow, u]\arrow[hookrightarrow, r] \arrow[ "\Psi","\simeq"', d] & N^*_{\mathrm{Dol}} \times \{1\} \arrow[hookrightarrow, u] \arrow[d, "\Psi","\simeq"'] & \\
    ev^{-1}(z)\arrow[hookrightarrow, r] & N^*_{\mathrm{B}}. \arrow[uur, "ev"'] & 
    \end{tikzcd}
\end{equation*}
Taking intersection cohomology, we obtain the following commutative diagram 
\begin{equation*}
   \begin{tikzcd}[scale=1.1]
     IH^{N}(M_{\mathrm{Dol}}) \arrow{r} & IH^{N}(N^*_{\mathrm{Dol}} \times\{0\}) \arrow{r} &   IH^{N}(\chi^{-1}(w))=H^{N}(\chi^{-1}(w))\simeq H^{N}(\mathbb{T}_{\mathrm{Dol}}) \\
     IH^{N}(M_{\mathrm{Dol}}) \arrow[u, "\mathrm{Id}"',"\simeq"] \arrow{r} & IH^{N}(N^*_{\mathrm{Dol}} \times\{1\}) \arrow[u, "\mathrm{Id}"',"\simeq"] \arrow{r} &  IH^{N}((ev \circ \varPsi) ^{-1}(z))=H^{N}((ev \circ \varPsi)^{-1}(w)) \arrow[u,"\simeq","\mathfrak{a}"']\\
   IH^{N}(M_{\mathrm{B}}) \arrow[u, "\Psi^*"',"\simeq"]\arrow{r} & IH^{N}(N^*_{\mathrm{B}}) \arrow[u, "\Psi^*"',"\simeq"]\arrow{r} & IH^{N}(ev ^{-1}(z))=H^{N}(ev ^{-1}(z))\simeq H^{N}(\mathbb{T}_{\mathrm{B}})\arrow[u,"\simeq"," \Psi^*"'].
    \end{tikzcd}
\end{equation*}
The four lemma gives the short exact sequences
\begin{equation}\label{diagram:P=Wtopdegree}
   \begin{tikzcd}[scale=1.1]
    \ker(\mathrm{rest}_{\mathrm{Dol}}) \arrow[hookrightarrow, r]  & IH^{N}(M_{\mathrm{Dol}}) \arrow{r}{\mathrm{rest}_{\mathrm{Dol}}} &  H^{N}(\mathbb{T}_{\mathrm{Dol}}) \\
   \ker(\mathrm{rest}_{\mathrm{B}}) \arrow[u,"\Psi^*"', "\simeq"]\arrow[hookrightarrow, r] & IH^{N}(M_{\mathrm{B}}) \arrow[u, "\Psi^*"',"\simeq"]\arrow{r}{\mathrm{rest}_{\mathrm{B}}} & H^{N}(\mathbb{T}_{\mathrm{B}})\arrow[u,"\simeq","\mathfrak{a} \circ \Psi^*"'],
    \end{tikzcd}
\end{equation}
where $\mathrm{rest}_{\mathrm{Dol}}$ and $\mathrm{rest}_{\mathrm{B}}$ are the natural restriction maps. 
 We show then that
\begin{align*}
 P^{\chi}_{N-1} IH^{N}(M_{\mathrm{Dol}}) 
 \stackrel{(i)}{=} \ker(\mathrm{rest}_{\mathrm{Dol}}) 
 & \stackrel{\cref{diagram:P=Wtopdegree}}{=} \Psi^*(\ker(\mathrm{rest}_{\mathrm{B}})) 
 \stackrel{(iii)}=  \Psi^*(W_{2N-1} IH^{N}(M_{\mathrm{B}})) \\
 \Gr^P_{N}IH^*(M_{\mathrm{Dol}}) 
 \stackrel{(ii)}{\simeq} H^{N}(\mathbb{T}_{\mathrm{Dol}})
 & \stackrel{\cref{diagram:P=Wtopdegree}}{\simeq} H^{N}(\mathbb{T}_{\mathrm{B}})
 \stackrel{(iv)}{\simeq} \Gr^W_{2N}IH^*(M_{\mathrm{B}}).
\end{align*}
Indeed, $(i)$ holds by \eqref{def:perversefiltration};
$(ii)$ follows from 
the surjectivity of $\mathrm{rest}_{\mathrm{Dol}}$. 
In order to prove $(iii)$ and $(iv)$ we proceed as follows. 
By \cite[Theorem 2.11]{Harder2019}\footnote{All the compactifications in \cite{Harder2019} are snc. Therefore, we apply \cite[Theorem 2.11]{Harder2019} to a log resolution of a dlt compactification $(\overline{M}_{\mathrm{B}}, \partial M_{\mathrm{B}})$ which is an isomorphism over the snc locus of the pair, in particular over a neighbourhood of the zero dimensional lc centres of $(\overline{M}_{\mathrm{B}}, \partial M_{\mathrm{B}})$. We descend the result to $M_{\mathrm{B}}$ using for instance \cite[Proposition 4.3]{Payne}.} (which relies on \cite{EN02}) the restriction $\mathrm{rest}_{\mathrm{B}}$ factors as follows
\begin{equation}\label{eq:restB}
    \mathrm{rest}_{\mathrm{B}}\colon  IH^{N}(M_{\mathrm{B}}) \to \Gr^W _{2N}IH^{N}(M_{\mathrm{B}}) \xrightarrow{\mathfrak{b}} H^{N}(\mathbb{T}_{\mathrm{B}}) \simeq \Q.
\end{equation} By the commutativity of \eqref{diagram:P=Wtopdegree},  the surjectivity of $\mathrm{rest}_{\mathrm{Dol}}$ implies the surjectivity of $\mathrm{rest}_{\mathrm{B}}$ (the vertical arrows are isomorphisms), which gives the surjectivity of $\mathfrak{b}$ by \eqref{eq:restB}.  Further, the domain of $\mathfrak{b}$, i.e.\ $\Gr^W _{2N}IH^{N}(M_{\mathrm{B}})$, has rank one, because
\begin{equation}\label{eq:GRcompare}
    \dim \Gr^W _{2N}IH^{N}(M_{\mathrm{B}})=\dim \widetilde{H}^{N-1}(\mathcal{D}(\partial M_{\mathrm{B}}))=\dim \widetilde{H}^{N-1}(\mathbb{S}^{N-1})=1
\end{equation}
by \cref{lem:dualcomplexcohomology} and the geometric P=W conjecture. All together, this gives that $\mathfrak{b}$ is an isomorphism, i.e. $(iv)$ holds, and so $(iii)$ does too.
\end{proof}

\begin{remark}
We do not expect the geometric P=W conjecture to inform us about the cohomological P=W conjecture in lower weight. Note that the homotopy class of the map $ev \circ \Psi$ records information only about the general fibre. On the other hand, to understand the perverse filtration in lower perversity it is necessary by \cref{def:perversefiltration} to know the topology of the preimage of lower codimensional affine subspaces of the Hitchin base. To this extent, \cref{thm:cohomologicalvsgeometric} is optimal, and a geometric explanation of the cohomological P=W conjecture in all weights requires a refinement of \cref{conjKNPS}.
\end{remark}

\begin{remark}[Relation to other work I]
Harder explains in \cite{Harder2019} how \eqref{eq:GRcompare} and $(iv)$ should be seen as evidence of the conjectural existence of logCY compactifications of character varieties. Analogues of \cref{thm:cohomologicalvsgeometric} have been proved in the surface case in \cite[\S 5]{Szabo2018} and \cite[\S 4]{Harder2019}.
\end{remark}

\begin{remark}[Relation to other work II]\label{Rmk:relationotherworksII}
In \cite[Proposition 5.7]{EvansMauri2019}, Evans and the first author constructed evaluation maps $ev$ whose general fibres are Lagrangian with respect to a symplectic form on $\overline{M}_{\mathrm{B}}$: this additional structure on $ev$ may be useful to show the homotopy commutativity of the diagram \eqref{diagP=W}, as follows. The smooth locus $M^{\mathrm{sm}} \coloneqq M^{\mathrm{sm}}_{\mathrm{B}}$ (as a $\mathcal{C}^{\infty}$-manifold) carries a canonical hyper-K\"{a}hler structure $(g,I,J,K)$, where the complex manifolds $(M^{\mathrm{sm}}, I)$ and $(M^{\mathrm{sm}}, J)$ are biholomorphic to $M^{\mathrm{sm}}_{\mathrm{Dol}}$ and $M^{\mathrm{sm}}_{\mathrm{B}}$ respectively. If the general fibre of $ev$ was chosen special Lagrangian with respect to the symplectic form $g(J \cdot, \cdot)$ and the $J$-holomorphic volume form, then $ev^{-1}(z)$ would be holomorphic with respect to the complex structure $I$ by \cite[\S 3]{Hitchin1997}, and since the Hitchin base is affine, $ev^{-1}(z)$ should generically coincide with a fibre of the Hitchin fibration. Finally, Step 4 of \cref{thm:P=Wconjecturegeomgenusonetext} should give the required commutativity of \eqref{diagP=W}. 

This correspondence between the Lagrangian tori is also the intuition behind \cref{assump:genericsmoothness}: the profound torus $ev^{-1}(z)$ should be ambient-isotopic to a special Lagrangian torus $\chi^{-1}(w)$, and the isotopy should be local (alias fixing a complement of the neighbourhood of a zero dimensional stratum of $\partial M_{\mathrm{B}}$, and thus occurring away from $\mathrm{Sing} M_{\mathrm{Dol}}$). This is compatible with the expectations in \cite[Conjecture 7.3]{Auroux07}. See also \cite{YangLi} where a similar result for Calabi--Yau degenerations has been achieved, conditional to a conjecture in non-Archimedean geometry.
\end{remark}

\section{Dual complex of character varieties}\label{sec:dualcomplexarbgenus}
In this section we collect some results about the rational homology and the fundamental group of dual complex of character varieties. We start with \cref{lem:dualcomplexcohomology}, which  generalises \cite[Theorem 4.4]{Payne} to the dlt setting.
\begin{lemma}\label{lem:dualcomplexcohomology}
Let \((\overline{X}, \partial X)\) be a dlt compactification of a variety \(X\) of dimension $N$.
For all non negative integers \(i\), the reduced cohomology group $\widetilde{H}^{i-1}(\mathcal{D}(\partial X))$ is isomorphic to 
$\Gr^W_{2N}H^{2N-i}(X) \simeq \Gr^W_{2N}IH^{2N-i}(X)$.
\end{lemma}
\begin{proof}
Let \(\pi\colon  (\overline{Y}, \partial Y) \to (\overline{X}, \partial X)\) be a log resolution extending a resolution $Y\coloneqq \pi^{-1}(X) \to X$. By \cite[Proposition 4.3, Theorem 4.4]{Payne}, there exist isomorphisms
\[
\widetilde{H}^{i-1}(\mathcal{D}(\partial Y)) \simeq \Gr^W_{2N}H^{2N-i}(Y) \simeq \Gr^W_{2N}H^{2N-i}(X).
\]
By \cite[Theorem 2.2.3.(a)]{deCataldoMigliorini2010}, there are morphisms of mixed Hodge structures $H^*(X) \to IH^*(X) \hookrightarrow H^*(Y)$ which yield
\[
\widetilde{H}^{i-1}(\mathcal{D}(\partial Y))\simeq \Gr^W_{2N}H^{2N-i}(X) \simeq \Gr^W_{2N}IH^{2N-i}(X) \simeq \Gr^W_{2N}H^{2N-i}(Y).
\]
Further, the dual complex \(\mathcal{D}(\partial Y)\) is homotopy equivalent to \(\mathcal{D}(\partial X)\), due to \cite[Theorem 28.(2)]{deFernexKollarXu2012}. This implies \(\widetilde{H}^{i-1}(\mathcal{D}(\partial Y))\simeq \widetilde{H}^{i-1}(\mathcal{D}(\partial X))\), and this concludes the proof.
\end{proof}
\begin{cor}[Rational homology]\label{cor:rational homology}
Let $C$ be a compact Riemann surface, and $G$ be either $\mathrm{GL}_2$ or $\mathrm{SL}_2$. Set $N \coloneqq \dim M_{\mathrm{B}}(C,G)$. Then $\mathcal{D} (\partial M_{\mathrm{B}}(C,G))$ has the rational homology of a sphere of dimension $N-1$.
\end{cor}
\begin{proof}
By \cite[\S 6]{Payne} $\mathcal{D}( \partial M_{\mathrm{B}}(C,G))$ has the rational homology of a wedge of spheres of dimension $N-1$. By Poincar\'{e} duality, the dimension of $\Gr^W_{2N}IH^{2N-i}( M_{\mathrm{B}}(C,G))$ is the constant term of the intersection E-polynomial in \cite[Theorem 1.4]{Mauri20}, alias one. We conclude then by \cref{lem:dualcomplexcohomology}.
\end{proof}

\begin{theorem}[Simply-connectedness]\label{thm:simplyconnectedness}
Let $C$ be a compact Riemann surface of genus $g$, and $G$ be either $\mathrm{GL}_n$ or $\mathrm{SL}_n$. If $\dim M_{\mathrm{B}}(C,G)>2$, then $\mathcal{D}( \partial M_{\mathrm{B}}(C,G))$ is simply-connected.
\end{theorem}
\begin{proof}
For some affine embedding ${M}_{\mathrm{B}}(C,G) \subseteq \C^m$, let $\overline{X}_1$ be the (normalisation of) the projective closure of ${M}_{\mathrm{B}}(C,G)$ in $\PP^m$. By \cref{existence of dltcomp}, a birational modification of $\overline{X}_1$ is a dlt compactification $(\overline{M}_{\mathrm{B}}, \partial M_{\mathrm{B}})$ of $M_{\mathrm{B}}=M_{\mathrm{B}}(C,G)$. Let $\overline{M}^{\mathrm{sm}}_{\mathrm{B}}$ be its smooth locus. By \cite[Corollary 40 and (35.2)]{KollarXu} (and since $\dim M_{\mathrm{B}}(C,G)>2$) there exists a surjection
\begin{equation}\label{eq:fundvarietydualcompalex}
    \pi_1(\overline{M}^{\mathrm{sm}}_{\mathrm{B}}) \twoheadrightarrow \pi_1(\mathcal{D}(\partial M_{\mathrm{B}})).
\end{equation}

If $G = \Sl$, $g>1$ and $(g,n)\neq (2,2)$, then $M^{\mathrm{sm}}_{\mathrm{B}}(C,\Sl)$ is simply-connected by \cite[Proposition 5.30]{FelisettiMauri2020}. Since the inclusion of a dense Zariski open subset $U$ in a variety $X$ yields the surjection $\pi_1(U) \twoheadrightarrow \pi_1(X)$, we obtain 
\[1=\pi_1(M^{\mathrm{sm}}_{\mathrm{B}}(C, \Sl)) \twoheadrightarrow \pi_1(\overline{M}^{\mathrm{sm}}_{\mathrm{B}}(C, \Sl)) \twoheadrightarrow \pi_1(\mathcal{D}( \partial M_{\mathrm{B}}(C,\Sl))).\]

For $G=\Gl$, $g>1$ and $(g,n)\neq (2,2)$, we construct the dlt compactification $\overline{M}_{\mathrm{B}}$ as follows. The algebraic torus $\mathbb{K} \simeq (\C^{*})^{2g}$ acts on $M_{\mathrm{B}}(C, \Gl)$ by
\begin{align*}
    \tau\colon  \mathbb{K} \times M_{\mathrm{B}}(C, \Gl) & \to M_{\mathrm{B}}(C, \Gl), \\
    ((\lambda_1, \ldots, \lambda_{2g}), (A_1, B_1, \ldots, A_g, B_g)) & \mapsto (\lambda_1 A_1, \lambda_2 B_1,  \ldots, \lambda_{2g-1}A_g,  \lambda_{2g}B_g)
\end{align*}
 and its categorical quotient is the character variety $M_{\mathrm{B}}(C, \PGl)$. Let $\overline{M}_{\mathrm{B}}(C, \PGl)$ be a projective compactification of $M_{\mathrm{B}}(C, \PGl)$, and choose $\overline{X}'_1$ a resolution of the indeterminacy of the rational map $\overline{X}_1 \dashrightarrow \overline{M}_{\mathrm{B}}(C, \PGl)$ extending the quotient map $q\colon  M_{\mathrm{B}}(C, \Gl) \to M_{\mathrm{B}}(C, \PGl)$, i.e.
\begin{center}
    \begin{tikzcd}[scale=1.1]
    &(\overline{X}'_1, \Delta'_1 \coloneqq (f^{'})^{-1}(\Delta_1))\arrow[dl, "f'"'] \arrow[dr, "g"]&\\
    (\overline{X}_1, \Delta_1) \arrow[rr, dashed, "q"]& &\overline{M}_{\mathrm{B}}(C, \PGl).
    \end{tikzcd}
\end{center}
Note that $f'$ is an isomorphism over $M_{\mathrm{B}}(C, \Gl)$. Up to blowups centered over $\Delta'_1$, we can also suppose that the general $g$-fibre is an snc compactification $(\overline{\mathbb{K}}, \Delta_{\mathbb{K}})$ of $\mathbb{K}$. Finally, as in \cref{existence of dltcomp}, we can construct a dlt modification $(\overline{M}_{\mathrm{B}}(C, \Gl), \partial M_{\mathrm{B}}(C, \Gl)) \to (\overline{X}'_1, \Delta'_1)$  which is an isomorphism over $M_{\mathrm{B}}(C, \Gl)$ and over the general $g$-fibre in $\overline{X}'_1$; cf \cite[Corollary 1.36]{KollarKovacs}. Hence, we obtain a morphism $\overline{M}_{\mathrm{B}}(C, \Gl) \to \overline{M}_{\mathrm{B}}(C, \PGl)$ whose general fibre is again $(\overline{\mathbb{K}}, \Delta_{\mathbb{K}})$.

 The determinant $\det\colon  M^{\rm sm}_{\mathrm{B}}(C, \Gl) \to M_{\mathrm{B}}(C, \C^*)\simeq \mathbb{K}$, given by $\det(A_1, \ldots, B_g) = (\det A_1, \ldots, \det B_g)$, is an isotrivial fibration with simply-connected fibre $M^{\rm sm}_{\mathrm{B}}(C, \Sl)$. This yields an isomorphism \[\pi_1(M^{\mathrm{sm}}_{\mathrm{B}}(C, \Gl))\simeq \pi_1(M_{\mathrm{B}}(C, \C^*))\simeq \pi_1(\mathbb{K}).\]
Recall that $\overline{\mathbb{K}}$ is simply-connected, because $\overline{\mathbb{K}}$ is a smooth projective rational variety. Then the natural inclusions induce the following commutative square
\begin{equation}\label{eq:fundamentalgroups}
    \begin{tikzcd}[scale=1.1]
    \pi_1(M^{\mathrm{sm}}_{\mathrm{B}}(C, \Gl)) \ar[r, two heads]& \pi_1(\overline{M}^{\mathrm{sm}}_{\mathrm{B}}(C, \Gl))\\
    \pi_1(\mathbb{K})\ar[r,, two heads] \ar[u, "\simeq"]& \pi_1(\overline{\mathbb{K}})=1. \ar[u]
    \end{tikzcd}
\end{equation}
By \eqref{eq:fundamentalgroups} and \eqref{eq:fundvarietydualcompalex}, 
we conclude
$\pi_1(\mathcal{D} (\partial M_{\mathrm{B}}(C, \Gl))) \simeq \pi_1(\overline{M}^{\mathrm{sm}}_{\mathrm{B}}(C, \Gl))=1$.

Finally, if $g=1$ or $(g,n)=(2,2)$, $M_{\mathrm{B}}$ admits a (unique crepant) symplectic resolution $\widetilde{M}_{\mathrm{B}}\to {M}_{\mathrm{B}}$; see for instance \cite[\S 2.3-2.4]{BellamySchedler2019}. This means that a holomorphic symplectic form on $M^{\mathrm{sm}}_{\mathrm{B}}$ extends to a symplectic form on $\widetilde{M}_{\mathrm{B}}$. Now if $g=1$, then $\widetilde{M}_{\mathrm{B}}(C, \Sl)$ is the fibre over $(1,1)$ of the isotrivial multiplication map $(\C^*\times \C^*)^{[n]} \to (\C^* \times \C^*)$, which can be easily seen to be simply-connected. If $(g,n)=(2,2)$, then $\pi_1({M}^{\mathrm{sm}}_{\mathrm{B}}(C, \SL_2))$ is generated by a single loop of order two (cf \cite[Proposition 5.30]{FelisettiMauri2020}), which is contracted in the resolution. In both cases we have $\pi_1(\widetilde{M}_{\mathrm{B}}(C, \Sl))=1$, and the previous arguments applied to $\widetilde{M}_{\mathrm{B}}$ gives $\pi_1(\mathcal{D}(\partial \widetilde{M}_{\mathrm{B}}))=1$ (both for $\Sl$ and $\Gl$). By \cite[Theorem 1]{deFernexKollarXu2012}, we conclude $\pi_1(\mathcal{D}(\partial M_{\mathrm{B}}))=\pi_1(\mathcal{D}(\partial \widetilde{M}_{\mathrm{B}}))=1$.
\end{proof}

\begin{remark}[Relation with other work III]\label{rmk:relationotherworksIII} It is reasonable to formulate the geometric P=W conjecture also for moduli spaces $X_{\mathrm{B}}=X_{\mathrm{B}}(C^{\circ},G, \mathcal{C}_k)$ of $G$-local systems on punctured curves $C^{\circ}\coloneqq C \setminus \{p_k\}_{k\in K}$, such that conjugacy classes $\mathcal{C}_k$ of the monodromy around the punctures $p_k$ are fixed. 
Actually, the first evidence of the geometric P=W conjecture have been provided in this setting: when $X_{\mathrm{B}}$ is the $\mathrm{SL}_2$-character variety of local systems on a punctured sphere 
(with fixed trace at the punctures),
Simpson proves 
in~\cite[Theorem 1.1]{Simpson2016} that the dual boundary complex $\mathcal{D}(\partial M_{\mathrm{B}})$ has the homotopy type of a sphere; see also~\cite[Theorem 1.4]{Komyo2015}.
For punctured spheres, there is no known proof of the commutativity of the diagram \eqref{diagP=W}, except for some partial results in the case of five punctures due to Szab\'{o}
\cite{Szabo2021}. The full geometric P=W conjecture has also been proved in the Painlev\'{e} cases in \cite{Szabo2018, Szabo2019, NemethiSzabo2020}. Therefore, the results contained in this paper are the first evidence of the geometric P=W conjecture for compact curves and high rank.
\end{remark}

\begin{remark}[Relation with other work IV]
Let $X_{\mathrm{B}}$ be the character variety defined in \cref{rmk:relationotherworksIII}. For generic choice of the monodromy of the local systems around the punctures, $X_{\mathrm{B}}$ is smooth, and the analogues of \cref{cor:rational homology} and \cref{thm:simplyconnectedness} for $X_{\mathrm{B}}$ and structural group $G=\Gl$ follow at once from \cite{Mellit2019}. Indeed, curious Hard Lefschetz \cite[Theorem 1.5.3]{Mellit2019} implies 
\[\widetilde{H}^{N-1}(\mathcal{D}(\partial X_{\mathrm{B}}))\simeq \Gr^W_{2N}H^{N}(X_{\mathrm{B}})\simeq H^0(X_{\mathrm{B}})\simeq \Q,\] 
which is an analogue of \cref{cor:rational homology} (but no intersection cohomology is needed). Moreover, an snc compactification $\overline{X}_{\mathrm{B}}$ of $X_{\mathrm{B}}$ is smooth, projective and stably rational by the existence of the cell decomposition \cite[Theorem 1.4.1.]{Mellit2019}, thus simply-connected, and so by \cref{eq:fundvarietydualcompalex} we deduce the simply-connectedness of $\mathcal{D} (\partial X_{\mathrm{B}})$.

The argument is more subtle in the singular case, as we have shown above in the proofs of \cref{cor:rational homology} and \cref{thm:simplyconnectedness}. Indeed, the results of \cite{Mellit2019} are no longer available for singular character varieties. Note also that there are examples of rational dlt (even logCY) compactifications of singular affine varieties whose dual boundary complex is not simply-connected; see \cite[Example 60]{KollarXu} or \cite[Example 7.4]{Mauri2020dual}. However, by \cref{cor:rational homology} and \cref{thm:simplyconnectedness} this does not happen for dlt compactifications of character varieties, compatibly with the expectation in \cite[\S 1.2]{Simpson2016}.
\end{remark}

It is worthy to remark that, both in the smooth and in the singular case, the most elusive property necessary to show that the dual boundary complex has the homotopy of a sphere seems the vanishing of the torsion of $H^*(\mathcal{D} (\partial X_{\mathrm{B}}), \Z)$ or $H^*(\mathcal{D}(\partial M_{\mathrm{B}}), \Z)$; cf \cite[bullet points right before \S 6]{KollarXu}.

\appendix
\section{Independence of the geometric P=W conjecture of auxiliary choices}\label{appendixindep}
In this appendix we check the independence of the geometric P=W conjecture of all the auxiliary choices and claims made in \cref{subsec:defnMBMDol}. 

\begin{lemma}\label{lem:inclusionhomotopyequivalence}
The inclusion $\Psi(N^*_{\mathrm{Dol}})\hookrightarrow N^*_{\mathrm{B}}$ is a homotopy equivalence.
\end{lemma}
\begin{proof}
Choose rug functions $\beta_{\mathrm{B}}\colon \overline{M}_{\mathrm{B}} \to \R$ and $\beta_{\mathrm{Dol}}\colon \overline{M}_{\mathrm{Dol}} \to \R$, and real numbers $\delta_{{\mathrm{B}}}$ and $\delta_{\mathrm{Dol}}$ such that $N_{\mathrm{B}} = \beta_{\mathrm{B}}^{-1}[0, \delta_{\mathrm{B}})$ and $N_{\mathrm{Dol}} = \beta_\mathrm{Dol}^{-1}[0, \delta_\mathrm{Dol})$. Observe that the homotopy types of $N_{\mathrm{B}}$ and $N_{\mathrm{Dol}}$ is independent of these choices by the uniqueness of semialgebraic neighbourhoods; see Proposition \ref{uniqueness}. 

In order to show that $\Psi(N^*_{\mathrm{Dol}}) \hookrightarrow N^*_{\mathrm{B}}$ is a homotopy equivalence, we argue as in the proof of \cite[Proposition 1.6]{D83}. Consider the system of neighbourhoods of $\partial M_{\mathrm{Dol}}$ and $\partial M_{\mathrm{B}}$ respectively
\[ N_{\mathrm{Dol}, k}= \beta_{\mathrm{Dol}}^{-1}\bigg[0, \frac{\delta_{\mathrm{Dol}}}{k}\bigg) \subseteq \overline{M}_{\mathrm{Dol}} \qquad N_{B,k} = \beta_{\mathrm{B}}^{-1}\bigg [0, \frac{\delta_{\mathrm{B}}}{k} \bigg) \subseteq \overline{M}_{\mathrm{B}}  \quad \mathrm{ with }\, k \in \mathbb{N}^*.\]
Choose $k_1, k_2\in \mathbb{N}$ such that
$
\Psi(N^*_{\mathrm{Dol}, k_1}) \subseteq N^*_{B, k_2} \subseteq \Psi(N^*_{\mathrm{Dol}}) \subseteq N^*_{B}$.
The inclusion $N^*_{B, k_2} \subseteq N^*_{B}$ is a homotopy equivalence by Proposition \ref{uniqueness}. The same holds for $\Psi(N^*_{\mathrm{Dol}, k_1}) \subseteq \Psi(N^*_{\mathrm{Dol}})$, since $N^*_{\mathrm{Dol}, k_1}$ and $N^*_{\mathrm{Dol}}$ are stratified isotopic by \cref{uniqueness}, and $\Psi$ is a homeomorphism. Therefore, we have the following sequence in homotopy
\[ \pi_i(\Psi(N^*_{\mathrm{Dol}, k_1})) \to \pi_i( N^*_{\mathrm{B}, k_2}) \to \pi_i(\Psi(N^*_{\mathrm{Dol}})) \to \pi_i(N^*_{\mathrm{B}}),
\]
which has the property that the composition of two consecutive morphisms is an isomorphism. Hence, the inclusion $N^*_{B, k_2} \subseteq \Psi(N^*_{\mathrm{Dol}})$ induces isomorphisms of homotopy groups, and it is a homotopy equivalence. We conclude that $\Psi(N^*_{\mathrm{Dol}}) \to N^*_{\mathrm{B}}$ is a homotopy equivalence.
\end{proof}

The geometric P=W conjecture is independent of the choice of the dlt compactification, the semialgebraic deleted neighbourhood \(N_{\mathrm{B}}^*\), \(N^*_i\), and of the partition of unity \(\{\varphi_i\}\), as shown in \cref{lem:indep2}. 
 
To this end, we first extend the construction of a retraction between dual complexes of snc pairs in \cite[\S 4.2]{BoucksomJonsson2017} to the $\Q$-factorial dlt setting. Let $(X, \Delta_X = \sum_{i \in I}\Delta_{X,i})$ be a dlt compactification of $U = X \setminus \Delta_X$ satisfying \cref{assumpdivisor}.
We say that a morphism of reduced dlt pairs $g \colon (Z, \Delta_Z = \sum_{r \in R} \Delta_{Z,r}) \to (Y, \Delta_Y = \sum_{k \in K}\Delta_{Y,k})$ over $(X, \Delta_X)$ is a \textbf{birational morphism of dlt compactifications over $(X, \Delta_X)$} if
\begin{itemize}
\item the structural morphisms $f_Z \colon Z \to X$ and $f_Y \colon Y \to X$ are birational, and $f_Z = f_Y \circ g$;
    \item $U \simeq Y \setminus \Delta_Y \simeq X \setminus \Delta_X$;
    \item $(Z,\Delta_Z)$ and $(Y,\Delta_Y)$ satisfy \cref{assumpdivisor}.
\end{itemize}

Suppose that $X$, $Y$ and $Z$ are $\Q$-factorial. 
Write $f_Y^*{\Delta_X}= \sum_{k \in K}n_k \Delta_{Y,k}$ for some $n_k \in \Q$, and 
$$f_Z^*{\Delta_X}
= g^*(f_Y^*{\Delta_X}) = \sum_{k \in K}n_k g^*(\Delta_{Y,k})
= \sum_{k \in K}n_k \bigg(\sum_{r \in R} a_{kr} \Delta_{Z,r} \bigg)
\eqqcolon \sum_{r \in R}m_r \Delta_{Z,r}$$ 
for some $a_{kr} \in \Q$ and $m_r= \sum_{k \in K} n_k a_{kr}$.

A stratum $\Delta_{Z,S}$ of $\Delta_Z$, with $S \subseteq R$, corresponds to a cell $\sigma_S$ of $\mathcal{D}(\Delta_Z)$, which we identify with the standard simplex $\{\sum_{s \in S} w_s =1\}\subset \R^S_{\geq 0} \subseteq \R^R$. Let $\Delta_{Y,L}$ be the minimum stratum of $\Delta_Y$ such that $g(\Delta_{Z,S}) \subseteq \Delta_{Y,L}$. Denote by $\sigma_L$ the corresponding cell in $\mathcal{D}(\Delta_Y)$, identified with the standard simplex $\{\sum_{l \in L} u_l =1\}\subset \R^L_{\geq 0} \subseteq \R^K$.

We define the map $r_{ZY} \colon \mathcal{D}(\Delta_Z) \to \mathcal{D}(\Delta_Y)$ on each cell $\sigma_S$ by
\begin{equation}\label{eq:defretraction}
(w_s)_{s \in S} \in \sigma_S \mapsto \bigg(n_l \sum_{s \in S} a_{ls} \frac{w_s}{m_s}\bigg)_{l \in L} \in \sigma_L.\end{equation}
Observe that $r_{ZX} = r_{YX} \circ r_{ZY}$, with $f_X= \textrm{Id}_X$. We show now that the map $r_{YX}$ is a retraction. To this end, we first recall the definition of simple blow-up.

\begin{definition}
Let $t\colon (V_2, \Delta_{V_2}\coloneqq (t^{-1}\Delta_{V_1})_{\textup{red}}) \to (V_1, \Delta_{V_1})$ is a morphism of snc pairs. We say that $\rho$ is a \textbf{simple blow-up} if it is a
blowup along a smooth, connected subvariety $W$ of $\Delta_{V_1}$ meeting transversely
(or not at all) every irreducible component of $\Delta_{V_1}$ that does not contain $W$; see for instance \emph{\cite[Definition 22]{KontsevichSoibelman}}.
\end{definition}

\begin{lemma}
The map $r_{YX}$ is a retraction.
\end{lemma}
\begin{proof}
The idea of the proof is to reduce to the snc case and apply \cite[Proposition 4.3]{BoucksomJonsson2017}. Let $X^{\mathrm{snc}}$ be the largest locus in $X$ where the pair $(X, \Delta_X)$ is snc. There exists a birational modification $g\colon Z \to Y$ such that $h \coloneqq f \circ g \colon Z \to X$ is a composition of simple blow-up over $X^{\mathrm{snc}}$; see for instance \cite[Lemma 4.1]{BoucksomJonsson2017}. Up to passing to a dlt modification of $Z$ %$(Z \supset g^{-1}(\Delta_Y))$ 
which is an isomorphism over $h^{-1}(X^{\mathrm{snc}})$, we can suppose that $g\colon (Z, \Delta_Z) \to (Y, \Delta_Y)$ is a morphism of dlt compactifications. %Set $h \coloneqq f \circ g$. 
Note that $(X^{\mathrm{snc}}, \Delta^{\mathrm{snc}}_X\coloneqq \Delta_X|_{X^{\mathrm{snc}}})$ and $(h^{-1}(X^{\mathrm{snc}}), h^{-1}(\Delta^{\mathrm{snc}}_X))$ are snc pairs by definition of dlt pair and simple blow-up. 

Taking dual complexes, we have that $\mathcal{D}(\Delta_X) = \mathcal{D}(\Delta^{\mathrm{snc}}_X)$ again by definition of dlt pair, and $\mathcal{D}(h^{-1}(\Delta^{\mathrm{snc}}_X))$ is a subcomplex of $\mathcal{D}(\Delta_Z)$. By construction, the restriction $r_{ZX}$ to $\mathcal{D}(h^{-1}(\Delta^{\mathrm{snc}}_X))$ coincides with $r_{h^{-1}(X^{\mathrm{snc}}) X^{\mathrm{snc}}}$, which is a retraction by \cite[Lemma 4.7]{BoucksomJonsson2017}. The commutativity of the square
    \[
    \begin{tikzcd}
    \mathcal{D}(\Delta_Z) \arrow["r_{ZX}"]{rr} & \qquad &\mathcal{D}(\Delta_X)\\
    \mathcal{D}(h^{-1}(\Delta^{\mathrm{snc}}_X)) \arrow[hookrightarrow]{u} \arrow["r_{h^{-1}(X^{\mathrm{snc}}),X^{\mathrm{snc}}}"]{rr} & & \mathcal{D}(\Delta^{\mathrm{snc}}_X)\arrow["="]{u}.  
    \end{tikzcd}
\]
implies that $r_{ZX}$ is a retraction, and so $r_{YX}$ is a retraction too, since $r_{ZX}= r_{YX} \circ r_{ZY}$.
\end{proof}

\begin{lemma}[Independence of auxiliary choices]\label{lem:indep2} 
Let $(X, \Delta_X = \sum_{i \in I}\Delta_{X,i})$ and $(Y, \Delta_Y = \sum_{k \in K}\Delta_{Y,k})$ be dlt compactifications of $M_{\mathrm{B}}$. 
Let $ev_X\colon N^*_X \to \mathcal{D}(\Delta_X)$ and $ev_Y\colon N^*_Y \to \mathcal{D}(\Delta_Y)$ be evaluation maps for suitable semialgebraic neighbourhoods $N_X$ of $\Delta_X$ and $N_Y$ of $\Delta_Y$ containing $\Psi(N^*_{\mathrm{Dol}})$. If $\mathcal{D}(\Delta_X)$ (equivalently $\mathcal{D}(\Delta_Y)$) has the homotopy type of a sphere, then there exists a homotopy equivalence $\mathfrak{s}\colon \mathcal{D}(\Delta_Y) \to \mathcal{D}(\Delta_X)$ which makes the following diagram homotopy commutative:
\[
    \begin{tikzcd}
    & N^*_X \arrow{r}{ev_X}& \mathcal{D}(\Delta_X) \\
    N^*_{\mathrm{Dol}}\arrow{ru}{\Psi}\arrow["\Psi"']{rd}& &\\
    & N^*_Y \arrow{r}{ev_Y}& \mathcal{D}(\Delta_Y).\arrow["\mathfrak{s}"']{uu}
    \end{tikzcd}
\]
\end{lemma}

\begin{proof}
We proceed in several steps.
\begin{itemize}
    \item[Step 1] We can suppose that $(X, \Delta_X)$ and $(Y, \Delta_Y)$ are $\Q$-factorial. If $(X, \Delta_X)$ is not so (the same argument holds for $(Y, \Delta_Y)$ too), we take a small dlt $\Q$-factorial modification $\pi_1\colon (X_1, \Delta_1) \to (X, \Delta_X)$ such that $\Supp(\Delta_1)=\pi_1^{-1}(\Supp(\Delta_X))$ as in \cite[Definition 26]{deFernexKollarXu2012} (mind that in our case $\Delta_1$ and $\Delta_X$ are reduced). Since $M_{\mathrm{B}}$ has $\Q$-factorial singularities, $\pi_1$ is an isomorphism over $M_{\mathrm{B}}$, and so $(X_1, \Delta_1)$ is a $\Q$-factorial dlt compactification of $M_{\mathrm{B}}$. Note that $\mathcal{D}(\Delta_1)=\mathcal{D}(\Delta_X)$, and rug functions for $\Delta_{X, i}$ pull-back to rug functions for the divisor $\pi_1^{-1}(\Delta_{X, i})$. We conclude that the map $ev \circ \pi_1 \colon \pi_1^{-1}(N^*_X) \to \mathcal{D}(\Delta_X)$ is an evaluation map associated to the pair $(X_1, \Delta_1)$, and it can be identified with $ev$ itself via the isomorphisms $\pi_1^{-1}(N^*_X) \simeq N^*_X$ and $\mathcal{D}(\Delta_1)=\mathcal{D}(\Delta_X)$.
    \item[Step 2] We can suppose that there exists a birational morphism $f\colon (Y, \Delta_Y) \to (X, \Delta_X)$. 
     By construction, there exists a priori only a birational map $X \dashrightarrow Y$. Then choose $X_2$ a resolution of indeterminacy of the map which is an isomorphism over $M_{\mathrm{B}}$, i.e.\ \vspace{-0.3 cm}
    \[
    \begin{tikzcd}
    & X_2 \arrow["f"']{ld} \arrow{rd}&\\
    X \arrow[dashed]{rr} && Y.
    \end{tikzcd}
\]
Up to taking a dlt modification of $X_2$, we can suppose that $(X_2, \Delta_2 \coloneqq f^{-1}(\Delta_1)_{\mathrm{red}})$ is a $\Q$-factorial dlt compactification of $M_{\mathrm{B}}$. Therefore, \cref{lem:indep2} for $(X, \Delta_X)$ and $(Y, \Delta_Y)$ holds if and only if it it holds simultaneously for the pairs $(X, \Delta_X)$ and $(X_2, \Delta_2)$, and for the pairs $(X_2, \Delta_2)$ and $(Y, \Delta_Y)$. Hence, we can suppose that $Y$ dominates $X$, and we can now take $\mathfrak{s}\coloneqq r_{YX}$ as defined in \eqref{eq:defretraction}.

\item[Step 3] For any  $L \subseteq K$, let $\Delta_{X, J}$ be the minimum stratum of $\Delta_X$ such that $f(\Delta_{Y, L}) \subseteq \Delta_{X,J}$, with $J \subseteq I$. Assume first that $f(N_{Y, L})\subset N_{X,J}$. 
Consider the open cover of $N_Y$ 
\[U_i \coloneqq \bigcup_{k \colon f(N_{Y,k})\subset N_{X,i}} N_{Y,k}, \text{ with }i \in I.\]
By construction $\{\mathfrak{s} \circ ev_Y(i)\}_{i \in I}$ is a partition of unity subordinate to $\{U_i\}_{i \in I}$, while $\{ev_X (i) \circ f\}_{i \in I}$ is a partition of unity subordinate to $\{f^{-1}(N_i)\}_{i \in I}$. 
Note that for any $y \in U^{\circ}_J\coloneqq U_J \setminus \bigcup_{i \not \in J} U_i$, there exists $M \subseteq I$ containing $J$ such that $y \in f^{-1}((N_{X,M})^{\circ})$. Denoting $\sigma_J$ the (closed) cell in $\mathcal{D}(\Delta_X)$ corresponding to $\Delta_{X,J}$, we have that $\mathfrak{s} \circ ev_Y(y) \in \sigma_J \subseteq \sigma_M$ and $ev_X \circ f(y) \in \sigma_M$. Hence, the segment $t \, (\mathfrak{s} \circ ev_Y(y)) +(1-t)\, (ev_X \circ f(y))\subset \R^M$, with $t \in [0,1]$, is entirely contained in $\sigma_M$, and so the convex convolution of $\mathfrak{s} \circ ev_Y \circ  \Psi$ and $ev_X \circ \Psi$ gives the desired homotopy.

\item[Step 4] In Step 3 we assumed that $f(N_{Y,L})\subset N_{X,J}$ for any $L \subset K$. If this is not the case, we can shrink $N_{Y}= \beta^{-1}[0, \delta)$ to $N'_{Y}\coloneqq \beta^{-1}[0, \delta')$, with $\delta'\ll \delta$, such that $f(N'_{Y, L})\subset N_{X,J}$. Choose an auxiliary evaluation map $ev'_Y \colon N'_{Y} \to  \mathcal{D}(\Delta_Y)$. Applying Step 3 with $f=\mathrm{Id}_Y \colon Y \to Y$, we obtain that the maps $ev'_Y \circ \Psi$ and $ev_Y \circ \Psi$ are homotopic equivalent, and so again by Step 3 the following maps are homotopic to each other
\[\mathfrak{s} \circ ev_Y \circ \Psi \sim \mathfrak{s} \circ ev'_Y \circ \Psi \sim ev_X \circ \Psi.\]

\item[Step 5] By \cite[Theorem 28]{deFernexKollarXu2012} $\mathcal{D}(\Delta_X)$ and $\mathcal{D}(\Delta_Y)$ have the same homotopy type. If $\mathcal{D}(\Delta_X)$ has the homotopy of a sphere, then the retraction $r_{YX}$ has topological degree $\pm 1$, and so $\mathfrak{s}=r_{YX}$ is a homotopy equivalence by Hopf theorem.
\end{itemize}
\end{proof}

\section{Local computations on the Tate curve}\label{appendixA}

The goal of this appendix is to prove \cref{cor:appendix},
%~\cref{pairnormalreducedlc}, 
which is a technical ingredient needed in the proof of~\cref{lem:reduced lc logCY}. To this end, we recall the construction of the Tate curve. Following~\cite[{}VII]{DeligneRapoport},
it is a model over $\dS\coloneqq \C[[t]]$ of the multiplicative group $\mathbb{G}_{m}$ with special fibre given by an infinite chain of $\PP^1$'s.

\spa{
Let $(x_i,y_{i+1})_{i \in \Z}$ be a collection of indeterminates.
The Tate curve $\overline{\cG}_m$ over $\dS$ is the union of the affine charts $(U_{i+1/2})_{i \in \Z}$ given by
\[
U_{i+1/2}\coloneqq \Spec \left(\frac{\dS[x_i, y_{i+1}]}{(x_iy_{i+1}-t)} \right).
\]
For each $i \in \Z$, the charts $U_{i-1/2}$ and $U_{i+1/2}$ are glued along the open subscheme
\begin{alignat*}{2}
T_i \coloneqq U_{i-1/2} \cap U_{i+1/2} & = \Spec \left(\O(U_{i+1/2})[x_i^{-1}] \right)= \Spec \left(\dS[x_i, x^{-1}_i ]\right) \qquad \quad && (y_{i+1}= t/x_i)\\
& = \Spec \left(\O(U_{i-1/2})[y_i^{-1}]\right)= \Spec\left( \dS[y_i, y^{-1}_i ] \right)\qquad \quad && (x_{i-1}= t/y_i)
\end{alignat*}
via the identification $x_iy_i =1.$ 
}

\spa{
The $\dS$-group scheme $\cG_m \coloneqq \bigcup_{i \in \Z} T_i$, obtained from $\overline{\cG}_m$ by removing the nodes in the special fibre, is the N\'{e}ron model of the multiplicative group $\mathbb{G}_m$, as explained in~\cite[Example 1.2.c]{DeligneRapoport}. 
In particular, the $n$-th multiplication map
$
%\[
\mathbb{G}_m \times \ldots \times \mathbb{G}_m \to \mathbb{G}_m 
%\]
$
extends to a homomorphism
\[
\mu_n \colon \cG^n_m \coloneqq \cG_m \times_\dS \ldots \times_\dS \cG_m \longrightarrow \cG_m 
\]
of $\dS$-group schemes. When $n=2$, $\mu_n$ is
given in local charts by
\begin{alignat*}{2}
    T_i \times_{\dS} T_j & \longrightarrow && T_{i+j}\\
    \dS[x_i, x^{-1}_i]\otimes_{\dS} \dS[x_j, x^{-1}_j] & \longleftarrow && \dS[x_{i+j}, x^{-1}_{i+j}]\\
    x_i\otimes x_j & \: \mapsfrom && x_{i+j}.
\end{alignat*}
As $x_{i-1}y_i =t$ and $x_i y_i =1$,
it follows that
$x_i = t^{-i}x_0$. In particular, the identity section $\mathcal{I}d$ of $\cG_m$ is cut out in the chart $T_i$ by the equation $x_i = t^{-i}$.

Let $\cV_{n-1}\coloneqq \mu_n^{-1}(\mathcal{I}d)$ be the fibre of the identity section $\mathcal{I}d$ via $\mu_n$, and let $\overline{\cV}_{n-1}$ denote the closure of $\cV_{n-1}$ in $\overline{\cG}_m^n$. 
\cref{pairnormalreducedlc} describes some properties of $(\overline{\cV}_{n-1}, \overline{\cV}_{n-1,0})$, where $\overline{\cV}_{n-1,0}$ is the special fibre of $\overline{\cV}_{n-1}$;
\begin{equation*}
\arraycolsep=1.4pt
\begin{array}{ccccc}
     & & \cV_{n-1} & \subset & \cG_m^n \\
     & & \cap &  & \cap   \\
     & & \overline{\cV}_{n-1} & \subset &  \overline{\cG}_m^n. 
\end{array}
\end{equation*}
}

\begin{proposition}\label{pairnormalreducedlc}
The pair $(\overline{\cV}_{n-1}, \overline{\cV}_{n-1, 0})$ is normal, reduced, and toric, i.e. $\overline{\cV}_{n-1}$ is the formal completion of a normal toric scheme along its reduced toric boundary $\overline{\cV}_{n-1, 0}$.
Furthermore, the intersection $\overline{\cV}_{n-1} \setminus \cV_{n-1}$
has codimension two in $\overline{\cV}_{n-1}$.
\end{proposition}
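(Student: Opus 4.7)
The plan is to reduce the statement to a local computation in each of the natural affine toric charts of $\overline{\cG}_m^n$. For every $n$-tuple $\mathbf{i}=(i_1,\ldots,i_n)\in\Z^n$, consider the affine chart
\[
U_{\mathbf{i}}:=U_{i_1+1/2}\times_{\dS}\cdots\times_{\dS}U_{i_n+1/2}\subset\overline{\cG}_m^n,
\]
with coordinate ring $A_{\mathbf{i}}=\C[X_1,Y_1,\ldots,X_n,Y_n]/(X_kY_k-X_lY_l:k\neq l)$ and $t=X_kY_k$, where $X_k$ and $Y_k$ stand for $x_{i_k}^{(k)}$ and $y_{i_k+1}^{(k)}$. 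The scheme $U_{\mathbf{i}}$ is a normal affine toric $\C$-scheme---it is the $n$-fold fibre product over $\A^1$ of the toric map $\A^2\to\A^1$, $(X,Y)\mapsto XY$---and its reduced toric boundary is the special fibre $(U_{\mathbf{i}})_0=\{t=0\}$. The nodes of $\overline{\cG}_m^n$ lying in $U_{\mathbf{i}}$ are precisely the torus-invariant codimension-two strata $Z_k:=\{X_k=Y_k=0\}$ for $k=1,\ldots,n$.

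First I would identify $\overline{\cV}_{n-1}\cap U_{\mathbf{i}}$ with an explicit toric subvariety of $U_{\mathbf{i}}$. On the dense open subchart $T_{i_1}\times_{\dS}\cdots\times_{\dS}T_{i_n}$ the pullback of the identity section via $\mu_n$ is cut out by the single Laurent monomial equation $X_1\cdots X_n=t^{-s}$, with $s:=\sum_k i_k$. Hence $\cV_{n-1}$ restricts on the generic fibre to a subtorus of absolute dimension $n$ of the big torus $\mathbf{G}_m^{n+1}\subset U_{\mathbf{i}}$, and by the standard fact that the closure of a subtorus in a toric variety is again a toric subvariety, $\overline{\cV}_{n-1}\cap U_{\mathbf{i}}$ is a normal, integral toric subvariety of $U_{\mathbf{i}}$, whose reduced toric boundary is induced from the toric boundary of $U_{\mathbf{i}}$. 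A direct calculation---carried out in small cases, e.g.\ $n=2$ with $s=0$ giving $\overline{\cV}_1\cap U\simeq\mathbf{G}_m\times\A^1$, or $s=-1$ giving $\overline{\cV}_1\cap U\simeq\A^2$---shows that the ideal of the closure is generated by the expected relations (such as $\prod_kX_k-1$ when $s=0$, or pairs $X_k-Y_l$ when $s\neq 0$) after clearing denominators via $X_kY_k=t$ and saturating to remove spurious factors. These ideals are compatible with the transition functions $x_{i+1}=y_{i+1}^{-1}$, so they glue to a well-defined closed subscheme $\overline{\cV}_{n-1}\subset\overline{\cG}_m^n$, and normality, reducedness, and the toric structure of $(\overline{\cV}_{n-1}\cap U_{\mathbf{i}},(\overline{\cV}_{n-1}\cap U_{\mathbf{i}})_0)$ follow immediately.

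For the codimension claim, note that since $\overline{\cV}_{n-1}\cap U_{\mathbf{i}}$ is torus-invariant in $U_{\mathbf{i}}$, its intersection with each torus-invariant codimension-two stratum $Z_k$ is either empty or a proper torus-invariant subvariety, whose codimension in $\overline{\cV}_{n-1}\cap U_{\mathbf{i}}$ is computed by the orbit--cone dictionary to be exactly two. The small cases illustrate both alternatives: in the chart $U_{1/2}\times U_{1/2}$ for $n=2$, every generator of the subtorus involves an invertible $X_k$, so $\overline{\cV}_1\cap U$ misses $Z_k$ entirely; in the chart $U_{-1/2}\times U_{1/2}$, the intersection with $Z_1\cup Z_2$ is the single torus-fixed point (the origin of $\A^2$), of codimension two. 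Collecting these local contributions across all $\mathbf{i}$ yields the desired bound.

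The main obstacle will be the careful identification of the \emph{ideal} of $\overline{\cV}_{n-1}\cap U_{\mathbf{i}}$ rather than merely its underlying set: the naive equation $\prod_kX_k\cdot t^s=1$ typically factors as $X_k\cdot(\cdots)$ after substituting $t=X_kY_k$ and therefore acquires spurious components that must be removed in the saturation. Phrasing the argument in terms of the intrinsic subtorus of the big torus of $\overline{\cG}_m^n$ cut out by $\cV_{n-1,K}$, and invoking the standard statements about closures of subtori in toric varieties, is the cleanest way to handle this bookkeeping uniformly across all charts.
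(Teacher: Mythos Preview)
Your overall localization strategy---reduce to charts $U_{\mathbf{i}}$ and exploit torus-invariance---matches the paper's, but the central step has a genuine gap. You assert that ``the closure of a subtorus in a toric variety is a \emph{normal} toric subvariety'' and then deduce normality of $\overline{\cV}_{n-1}\cap U_{\mathbf{i}}$ from this. That statement is false in general: the one-parameter subgroup $t\mapsto (t^2,t^3)$ in $\mathbf{G}_m^2$ is a subtorus whose closure in $\A^2$ is the cuspidal cubic $y^2=x^3$. Closures of subtori are always torus-invariant and integral, but normality depends on whether the image of the ambient semigroup $\sigma^\vee\cap M$ in the quotient character lattice $M/\Z\chi$ is saturated, and this is exactly the ``spurious factors'' issue you flag at the end and then try to sidestep. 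Your codimension-two claim has the same root problem: you invoke the orbit--cone correspondence for $\overline{\cV}_{n-1}\cap U_{\mathbf{i}}$, but that dictionary is only available once you know the scheme is a \emph{normal} toric variety and have identified its fan.

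The paper does not appeal to any such general fact. Instead it splits into cases on the sign of $|\alpha|=\sum_k i_k$; the only hard case is $|\alpha|<0$, where normality is proved via Serre's criterion. For $\mathrm{S}_2$ the paper uses a trick: it exhibits a complete intersection $\cZ_\alpha\subset U_{\alpha+1/2}$ (cut out by $t\prod X_{\alpha_k}=t^{-|\alpha|+1}$ together with the equations $X_{\alpha_k}Y_{\alpha_k+1}=t$) with $\cZ_\alpha=\overline{\cV}_\alpha\cup (U_{\alpha+1/2})_0$, and then applies the lemma that if a Gorenstein scheme of pure dimension is a union of two equidimensional closed subschemes one of which is Cohen--Macaulay, so is the other. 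For $\mathrm{R}_1$ it localizes at the generic point of each $(n-1)$-dimensional toric stratum $Z_J$ of the special fibre, eliminates variables, and finds $\overline{\cV}_\alpha$ locally isomorphic to $\mathbf{G}_m^{n-1}\times\A^1$ (or that $Z_J\not\subset\overline{\cV}_\alpha$). Reducedness of $\overline{\cV}_{\alpha,0}$ and the codimension-two statement both fall out of this same explicit local calculation. What your proposal is missing is precisely this hands-on verification; the small cases $n=2$ you checked are consistent with it but do not replace it.
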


The~\cref{pairnormalreducedlc} is an immediate corollary of~\cref{local computation} below. 
Indeed, the assertions in~\cref{pairnormalreducedlc} are local: we may work on the the open subsets
$$
U_{\alpha}\coloneqq U_{\alpha_1 + 1/2} \times_{\dS} \ldots \times_{\dS} U_{\alpha_n + 1/2},
$$
for any multi-index $\alpha=(\alpha_1, \ldots, \alpha_n) \in \Z^n$, since
%It is clear that 
the 
$U_{\alpha}$'s cover $\overline{\cG}_m^n$.
Let $\cV_{\alpha}$ be the restriction of $\cV_{n-1}$ to $T_{\alpha}\coloneqq T_{\alpha_1} \times_{\dS} \ldots \times_{\dS} T_{\alpha_n}$, $\overline{\cV}_{\alpha}$ be its closure in $U_{\alpha}$, and $\overline{\cV}_{\alpha, 0}$ be the special fibre of $\overline{\cV}_{\alpha}$. 
In local coordinates, 

\begin{minipage}{0.3\textwidth}
\begin{equation*}
\arraycolsep=1.4pt
\hspace{80pt}\begin{array}{ccccccc}
     & & \cV_{\alpha} & \subset & T_\alpha & & \\
     & & \cap &  & \cap &  &  \\
     & & \overline{\cV}_{\alpha} & \subset & U_\alpha & & 
\end{array}
\end{equation*}
\end{minipage}
\begin{minipage}{0.7\textwidth}
\begin{align*}
U_{\alpha}& = \Spec \left(\frac{\dS[x_{\alpha_1}, y_{\alpha_{1}+1}, \ldots, x_{\alpha_n}, y_{\alpha_{n}+1}]}{ x_{\alpha_1}y_{\alpha_1 +1}=\ldots =x_{\alpha_n}y_{\alpha_n +1}=t } \right)\\
T_{\alpha}& = \Spec \left( \dS[x_{\alpha_1}^{\pm 1}, \ldots, x_{\alpha_n}^{\pm 1}]\right)\subset U_{\alpha},\\
\cV_{\alpha}& =\left\{ \prod^n_{i=1} x_{\alpha_i} = t^{- \sum \alpha_i}\right\} \subset T_{\alpha}.
\end{align*}
\end{minipage}

\begin{lemma}\label{local computation}
For any $\alpha \in \Z^n$, the pair $(\overline{\cV}_{\alpha}, \overline{\cV}_{\alpha, 0})$ is normal, reduced, and toric. 
Furthermore, the intersection $\overline{\cV}_{\alpha} \setminus \cV_\alpha$ has codimension two in $\overline{\cV}_{\alpha}$.
\end{lemma}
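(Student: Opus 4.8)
Fix $\alpha\in\Z^n$ and set $m\coloneqq\sum_i\alpha_i$; throughout we use the coordinates $x_{\alpha_i},y_{\alpha_i+1}$ ($1\leqslant i\leqslant n$) of $U_{\alpha+1/2}$ introduced above, subject to $x_{\alpha_i}y_{\alpha_i+1}=t$. The plan is to make the toric structure of the ambient chart explicit, compute the defining ideal of $\overline{\cV}_{\alpha}$ directly, and then read off the three conclusions.

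First I would record the geometry of $U_{\alpha+1/2}$. Eliminating $t=x_{\alpha_1}y_{\alpha_1+1}$, it is the affine scheme $\{x_{\alpha_i}y_{\alpha_i+1}=x_{\alpha_1}y_{\alpha_1+1}:2\leqslant i\leqslant n\}\subseteq\mathbf{A}^{2n}$, a complete intersection of the expected codimension $n-1$ (hence Cohen--Macaulay), whose singular locus --- read off from the Jacobian --- is contained in $\bigcup_{i\ne j}\{x_{\alpha_i}=y_{\alpha_i+1}=x_{\alpha_j}=y_{\alpha_j+1}=0\}$, of codimension $\geqslant 2$; so $U_{\alpha+1/2}$ is normal by Serre's criterion, and it is a normal affine toric variety with dense torus $\mathbf{G}_m^{n+1}$. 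The ideal $(x_{\alpha_1}y_{\alpha_1+1},\dots,x_{\alpha_n}y_{\alpha_n+1})$ is radical (it is monomial), so the special fibre $U_{\alpha+1/2,0}=\{t=0\}$ is a reduced, torus-invariant divisor. Finally, since $\cG_m$ is obtained from $\overline{\cG}_m$ by deleting the nodes of the special fibre --- in the chart $U_{i+1/2}$ the node being the point $\{x_i=y_{i+1}=0\}$ --- the locus $\overline{\cG}_m^n\setminus\cG_m^n$ meets $U_{\alpha+1/2}$ in the torus-invariant subset $\bigcup_{i=1}^n\{x_{\alpha_i}=y_{\alpha_i+1}=0\}$, pure of codimension two.

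Next I would identify $\cV_{\alpha}$ and compute its closure. Inside the dense torus $\mathbf{G}_m^{n+1}\subseteq U_{\alpha+1/2}$, the subscheme $\cV_{\alpha}$ is cut out by the single binomial $\prod_i x_{\alpha_i}=(x_{\alpha_1}y_{\alpha_1+1})^{-m}$, whose exponent vector is primitive (the exponent of $x_{\alpha_2}$ is $\pm1$), so $\cV_{\alpha}\cong\mathbf{G}_m^n$ is a subtorus and $\overline{\cV}_{\alpha}$ is its Zariski closure in $U_{\alpha+1/2}$. I would compute this closure by eliminating the $y_{\alpha_i+1}$ against the relations $x_{\alpha_i}y_{\alpha_i+1}=t$ (passing, whenever a coordinate is forced to be a unit on $\cV_{\alpha}$, to the corresponding torus-invariant open subset of $U_{\alpha+1/2}$): the outcome is an explicit system of binomial equations exhibiting $\overline{\cV}_{\alpha}$ as a product of copies of $\mathbf{G}_m$ and $\mathbf{A}^1$ --- in particular smooth, hence normal and reduced, and toric with respect to the subtorus $\cV_{\alpha}$. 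In this presentation $\overline{\cV}_{\alpha,0}=\{t=0\}\cap\overline{\cV}_{\alpha}$ is the reduced union of those coordinate hyperplanes on which the monomial $t$ vanishes, i.e.\ a reduced part of the toric boundary of $\overline{\cV}_{\alpha}$; and intersecting the explicit equations with $\bigcup_i\{x_{\alpha_i}=y_{\alpha_i+1}=0\}$ leaves a torus-invariant subset of codimension two (empty when $m\geqslant0$, since then every $x_{\alpha_i}$ is a unit on $\overline{\cV}_{\alpha}$).

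The main obstacle is the closure computation, since the shape of $\overline{\cV}_{\alpha}$ genuinely depends on $m$: for $m\geqslant0$ the binomial forces all $x_{\alpha_i}$ (and $y_{\alpha_1+1}$ when $m>0$) to be units, so $\overline{\cV}_{\alpha}$ avoids the node locus altogether and is $\cong\mathbf{G}_m^{n-1}\times\mathbf{A}^1$ (if $m=0$) or $\cong\mathbf{G}_m^n$ (if $m>0$); for $m<0$ the closure does meet the coordinate divisors --- including the singular locus of $U_{\alpha+1/2}$ --- and one must check by hand that its intersection with the node locus has codimension two, the cleanest model being $n=2$, $m=-1$, where $\overline{\cV}_{\alpha}=\{x_{\alpha_2}=y_{\alpha_1+1},\ x_{\alpha_1}=y_{\alpha_2+1}\}\cong\mathbf{A}^2$ and the node locus cuts it in the origin. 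To keep the bookkeeping uniform I would exploit the order-two $\C[[t]]$-automorphism $\iota^{\otimes n}$ of $\overline{\cG}_m^n$ induced by inversion on $\mathbf{G}_m$ --- it preserves $\cG_m^n$, the node locus and the identity section, and carries $U_{\alpha+1/2}$ to $U_{\beta+1/2}$ with $\beta_i=-\alpha_i-1$ --- in order to cut down the values of $m$ to be treated directly. The remaining verifications are elementary but somewhat lengthy, which is why they belong in this appendix.
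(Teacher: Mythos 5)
Your proposal handles the easy cases ($|\alpha|\geqslant 0$) the same way the paper does, but the crux of the lemma is the case $|\alpha|<0$, and there your argument has a genuine gap. You assert that after eliminating the $y_{\alpha_i+1}$'s, ``the outcome is an explicit system of binomial equations exhibiting $\overline{\cV}_{\alpha}$ as a product of copies of $\mathbf{G}_m$ and $\mathbf{A}^1$ --- in particular smooth.'' But the elimination step only works on the locus where the $x_{\alpha_i}$'s are invertible, i.e.\ on an open neighbourhood of $\cV_{\alpha}$ inside $U_{\alpha+1/2}$; it tells you nothing about the closure outside that locus. Your caveat ``passing, whenever a coordinate is forced to be a unit on $\cV_{\alpha}$, to the corresponding torus-invariant open subset'' does not rescue this: every coordinate is tautologically a unit on $\cV_{\alpha}$ (which lives in the dense torus), and what you would need to know is which coordinates remain units on $\overline{\cV}_{\alpha}$ --- but when $|\alpha|<0$ the defining binomial $\prod_i x_{\alpha_i}=t^{-|\alpha|}$ has $t^{-|\alpha|}$ vanishing along the special fibre, so none of the $x_{\alpha_i}$'s are forced to be units, and the elimination is blocked exactly where the hard geometry lives. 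The claim that $\overline{\cV}_{\alpha}$ is globally a product of affine lines and tori (hence smooth) is the thing that would need a proof, and no proof is offered; you essentially say ``the remaining verifications are elementary but somewhat lengthy,'' which is where the actual content of the lemma should go.

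The paper sidesteps the question of global smoothness entirely, and this is where the two approaches diverge. Instead of trying to exhibit $\overline{\cV}_{\alpha}$ as a smooth scheme by elimination, the paper proves normality by establishing $\mathrm{S}_2$ and $\mathrm{R}_1$ separately. For $\mathrm{S}_2$, the key move is to realize $\overline{\cV}_{\alpha}$ as one of the two pure-dimensional pieces of an explicit complete intersection $\cZ_\alpha=\overline{\cV}_\alpha\cup (U_{\alpha+1/2})_0$; since $\cZ_\alpha$ is Gorenstein and $(U_{\alpha+1/2})_0$ is Cohen--Macaulay, a lemma of Koll\'ar forces $\overline{\cV}_\alpha$ to be Cohen--Macaulay as well. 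This argument is torus-equivariant and also gives the toric property for free (both $\overline{\cV}_\alpha$ and $\overline{\cV}_{\alpha,0}$ are torus-invariant subschemes of $\cZ_\alpha$). For $\mathrm{R}_1$, the paper then performs a local elimination analysis --- similar in spirit to what you propose, but carried out only at the generic points of the codimension-one strata $Z_{(J,j)}$ of $\overline{\cV}_{\alpha,0}$, which is all that is needed for Serre's criterion, reducedness of the special fibre, and the codimension-two statement about the node locus. That local analysis is the part of your sketch that works; it simply does not suffice on its own without the $\mathrm{S}_2$ input.

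A remark on the inversion symmetry: the automorphism $\iota^{\otimes n}$ you mention does carry $U_{\alpha+1/2}$ to $U_{\beta+1/2}$ with $\beta_i=-\alpha_i-1$, hence $|\beta|=-|\alpha|-n$. This lets you trade $|\alpha|\leqslant -n$ for $|\alpha|\geqslant 0$, but the range $-n<|\alpha|<0$ is mapped to itself, so the symmetry does not dispose of the hard regime, and the paper's uniform treatment of all $|\alpha|<0$ is cleaner than a case split. Your $n=2$, $|\alpha|=-1$ computation is correct as far as it goes, but it is one data point, not a mechanism for the general case.
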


\begin{proof}
The proof is divided into cases depending on the sign of $|\alpha | \coloneqq \sum_{i=1}^n \alpha_i$.
\begin{enumerate}

\item[Case 1.] Assume $|\alpha| >0$. The equation $t^{|\alpha|} \prod_{i=1}^n x_{\alpha_i} = 1$ vanishes on $\overline{\cV}_\alpha$. This implies that $t$ and all $x_{\alpha_i}$ are invertible on it, thus $\overline{\cV}_{\alpha,0} = \emptyset$ and $\overline{\cV}_\alpha= \cV_\alpha \simeq \mathbb{G}^{n-1}_{\C((t))}$.

\item[Case 2.] Assume $|\alpha|=0$. 
The equation $\prod_{i=1}^n x_{\alpha_i} = 1$ implies that all $x_{\alpha_i}$ are invertible on $\overline{\cV}_\alpha$.
We obtain that
$$
%U_\alpha \supset 
\overline{\cV}_{\alpha} \subseteq 
%\cW_\alpha= 
\{ \prod_{i=1}^n x_{\alpha_i} = 1, \, x_{\alpha_i}y_{\alpha_i +1} = t \} \simeq \mathbb{G}^{n-1}_R, 
$$
thus $(\overline{\cV}_{\alpha}, \overline{\cV}_{\alpha, 0})=(\mathbb{G}^{n-1}_R, \mathbb{G}^{n-1}_\C)$ and $\overline{\cV}_{\alpha} \setminus \cV_\alpha = \emptyset$.

\item[Case 3.] Assume $|\alpha|<0$. 
We will show that $\overline{\cV}_{\alpha}$ is normal by proving the conditions $\mathrm{S}_2$ and $\mathrm{R}_1$, 
and in the process we deduce that $(\overline{\cV}_{\alpha}, \overline{\cV}_{\alpha, 0})$ is toric, $\overline{\cV}_{\alpha, 0}$ is reduced and $\overline{\cV}_{\alpha}\setminus {\cV}_{\alpha}$ has codimension two in $\overline{\cV}_{\alpha}$.
\begin{enumerate}

\item[Step 1.] 
We prove that $\overline{\cV}_{\alpha}$ is Cohen-Macaulay (hence $\mathrm{S}_2$) by applying \cite[Lemma 7]{Kollar2011}: if a Gorenstein scheme of pure dimension $d$ is a union of two closed subschemes of pure dimension $d$ and one is Cohen-Macaulay, then the other is Cohen-Macaulay.

Let $\cZ_\alpha$ be the closed toric subscheme of $U_\alpha$ given by the equations
\[
\cZ_\alpha \coloneqq \begin{cases}
t \big( \prod^n_{i=1} x_{\alpha_i}-  t^{-|\alpha|} \big)=0,\\
x_{\alpha_1}y_{\alpha_1 +1}=\ldots =x_{\alpha_n}y_{\alpha_n +1}=t.
\end{cases}
\]
We have 
\begin{align*}
    &\cZ_{\alpha, 0}
     = U_{\alpha,0} = \{x_{\alpha_1}y_{\alpha_1 +1}=\ldots =x_{\alpha_n}y_{\alpha_n +1}=0 \}, \\
    &\cZ_\alpha \cap \{t \neq 0 \} 
     = \cV_\alpha \cap \{t \neq 0 \} \simeq \mathbb{G}_{m, \C((t))}^{n-1}, 
\end{align*}
so $\cZ_\alpha = U_{\alpha,0} \cup \overline{\cV}_\alpha$. We can then apply  \cite[Lemma 7]{Kollar2011} as $\cZ_\alpha$ and $U_{\alpha,0}$ are complete intersections. Hence, $\overline{\cV}_\alpha$ is Cohen-Macaulay.
In particular, the pair $(\overline{\cV}_{\alpha}, \overline{\cV}_{\alpha, 0})$ is toric, as both $\overline{\cV}_{\alpha}$ and $\overline{\cV}_{\alpha, 0}$
are torus-invariant subschemes of $\cZ_{\alpha}$ and $\overline{\cV}_{\alpha} \setminus \overline{\cV}_{\alpha,0} \simeq \mathbb{G}_{m,\C((t))}^{n-1}$.

\item[Step 2.] To prove that $\overline{\cV}_\alpha$ is  $\mathrm{R}_1$, it suffices to check that $\overline{\cV}_\alpha$ is smooth at the generic point of the irreducible components of $\overline{\cV}_{\alpha,0}$, since $\overline{\cV}_\alpha \cap \{t \neq 0 \}=\cV_\alpha \cap \{t \neq 0\}$ is smooth.
Let $D \subseteq \overline{\cV}_{\alpha,0}$ be one of them. Up to relabelling, there exists an integer $m$ such that the chart $U$ of $U_{\alpha,0}$
$$
U \coloneqq
\left\{ \begin{aligned}
& x_{\alpha_1}= \ldots = x_{\alpha_m} = y_{\alpha_{m+1} +1}= \ldots = y_{\alpha_{n} +1}=0  \\
& y_{\alpha_{2}+1} \neq 0, \ldots , y_{\alpha_{m}+1} \neq 0,  x_{\alpha_{m+1}}\neq 0, \ldots , x_{\alpha_{n}} \neq 0
\end{aligned}
\right\} \simeq \mathbb{G}_{m, \C}^{n-1} \times \mathbb{A}^1_\C
$$
contains the generic point of $D$.
At the generic point of $D$ we have
\begin{equation*}
\begin{cases}
x_{\alpha_i}= x_{\alpha_1} y_{\alpha_1 + 1} y_{\alpha_i+1}^{-1} & \textrm{ for }2 \leqslant i \leqslant m \\
y_{\alpha_i+1}= x_{\alpha_1} y_{\alpha_1 + 1} x_{\alpha_i}^{-1} & \textrm{ for }m+1 \leqslant i \leqslant n, \\
\end{cases}
\end{equation*}
and $\overline{\cV}_\alpha$ is an irreducible component of the locus given by the equation
\begin{equation}
\label{equ:W_alpha}
\prod_{i=1}^{n} x_{\alpha_i} 
= x_{\alpha_1}^{m} y_{\alpha_1 +1}^{m-1} \underbrace{\bigg( \prod_{j=2}^m y_{\alpha_{j}+1}^{-1}\prod_{i=m+1}^n x_{\alpha_i}\bigg)}_{u} 
= x_{\alpha_1}^{-|\alpha|} y_{\alpha_1 +1}^{-|\alpha|}.
\end{equation}

If $m > - |\alpha| $, then \cref{equ:W_alpha} implies that $x_{\alpha_{1}}^{-|\alpha|} y_{\alpha_1 + 1}^{{-|\alpha|}}(u x_{\alpha_{1}}^{m+|\alpha|} y_{\alpha_1 + 1}^{{m-1+|\alpha|}}- 1)=0$. This is impossible: $\overline{\cV}_\alpha$ would be cut by the equation $u x_{\alpha_{1}}^{m+|\alpha|} y_{\alpha_1 + 1}^{{m-1+|\alpha|}}= 1$ at the generic point of $D$, but then $D \not\subseteq \overline{\cV}_\alpha$.

\noindent The same argument proves that the case  $m < - |\alpha| $ cannot occur.

If $m = - |\alpha|$, then \cref{equ:W_alpha} implies that $x_{\alpha_{1}}^m y_{\alpha_1 + 1}^{m-1}(u- y_{\alpha_1 +1})=0$. It follows that 
\begin{equation}\label{eq:R1}
    \overline{\cV}_\alpha 
\stackrel{\text{loc at $D$}}{=} \{u- y_{\alpha_1 +1} =0 \}, 
\end{equation}
hence $\overline{\cV}_\alpha$ is $\mathrm{R}_1$, and $\overline{\cV}_{\alpha, 0}
\stackrel{\text{loc at $D$}}{=} \{x_{\alpha_1} =0 \}$ is reduced.
\item[Step 3] A point in $\overline{\cV}_{\alpha} \setminus \cV_\alpha \subseteq U_{\alpha} \setminus T_\alpha$ is characterised by the property that a pair of coordinates $(x_{\alpha_i}, y_{\alpha_i +1})$ vanishes simultaneously. Hence, \eqref{eq:R1} yields that $\cV_\alpha$ coincides with $\overline{\cV}_{\alpha}$ at the generic point of $D$. Therefore, 
 $\overline{\cV}_{\alpha} \setminus \cV_\alpha$ has codimension two in $\overline{\cV}_{\alpha}$.
\end{enumerate}
\end{enumerate}
\end{proof}

Let $\cX_{n-1}$ and $\overline{\cX}_{n-1}$ be as in \cref{construct dlt compact} and consider the diagram \cref{equ: diagram Ksing}. Given the uniformisation $p\colon  \overline{\cG}_m \rightarrow \mathscr{E}$, we have
\begin{equation*}
\begin{tikzcd}%[scale=1.1]
\overline{\cV}_{n-1} \times_R \overline{\cV}_{n-1} 
\arrow[hookrightarrow, r]  
\arrow[d]  
& (\overline{\cG}_m \times_R \overline{\cG}_m)^n
\arrow[d,  "{P \coloneqq (p,p)^n}"] \\
\overline{\cX}_{n-1}
\arrow[hookrightarrow, r] 
& (\mathscr{E} \times_R \mathscr{E})^n
\end{tikzcd}
\end{equation*}

\begin{corollary} \label{cor:appendix}
The pair $(\overline{\cX}_{n-1}, \overline{\cX}_{n-1,0})$ is reduced lc and $\overline{\cX}_{n-1} \setminus \cX_{n-1}$ has codimension two in $\overline{\cX}_{n-1}$.
\end{corollary}

\begin{proof}
The restriction of $P$ to $\overline{\cV}_{n-1} \times_R \overline{\cV}_{n-1}$ is \'{e}tale. Hence it suffices to prove the statement for the pair $\big(\overline{\cV}_{n-1} \times_R \overline{\cV}_{n-1} , (\overline{\cV}_{n-1} \times_R \overline{\cV}_{n-1})\,_0 \big)$. This follows directly from \cref{pairnormalreducedlc}, taking the fibre product.
\end{proof}

\printbibliography
\end{document}